\newcommand{\labelsize}{\scriptsize} 
  \newlength\ticklength 
	\newlength\figureheight 
	\newlength\figurewidth  
\definecolor{mygray}{rgb}{0.85,0.85,0.85}
\pgfplotsset{compat=1.8}
\newcommand{\eqand}{\ensuremath{\quad \textrm{and} \quad}}
\newcommand{\ld}{\ensuremath{,\ldots,}}
\newcommand{\ssq}{\ensuremath{\subseteq}}
\newcommand{\eps}{\ensuremath{\varepsilon}}
\newcommand{\Leb}{\ensuremath{\mathrm{Leb}}}
\newcommand{\diam}{\ensuremath{\mathrm{diam}}}
\newcommand{\conv}{\ensuremath{\mathrm{Conv}}}
\newcommand{\torus}{\ensuremath{\mathbb{T}^2}}
\newcommand{\nfolge}[1]{\ensuremath{(#1)_{n\in\mathbb{N}}}}
\newcommand{\alphlist}{\begin{list}{(\alph{enumi})}{\usecounter{enumi}\setlength{\parsep}{2pt}
      \setlength{\itemsep}{1pt} \setlength{\topsep}{5pt}
      \setlength{\partopsep}{3pt}}}
\newcommand{\arablist}{\begin{list}{(\arabic{enumi})}{\usecounter{enumi}\setlength{\parsep}{2pt}
          \setlength{\itemsep}{1pt} \setlength{\topsep}{5pt}
          \setlength{\partopsep}{3pt}}}
\newcommand{\romanlist}{\begin{list}{(\roman{enumi})}{\usecounter{enumi}\setlength{\parsep}{2pt}
              \setlength{\itemsep}{1pt} \setlength{\topsep}{5pt}
              \setlength{\partopsep}{3pt}}}
\newcommand{\Romanlist}{\begin{list}{(\Roman{enumi})}{\usecounter{enumi}\setlength{\parsep}{2pt}
              \setlength{\itemsep}{1pt} \setlength{\topsep}{5pt}
              \setlength{\partopsep}{3pt}}}
\newcommand{\bulletlist}{\begin{list}{$\bullet$}{\setlength{\parsep}{2pt}
                \setlength{\itemsep}{1pt} \setlength{\topsep}{5pt}
                \setlength{\partopsep}{3pt}\setlength{\leftmargin}{15pt}}} 
\newcommand{\Alphlist}{\begin{list}{(\Alph{enumi})}{\usecounter{enumi}\setlength{\parsep}{2pt}
      \setlength{\itemsep}{1pt} \setlength{\topsep}{5pt}
      \setlength{\partopsep}{3pt}}}
 \newcommand{\listend}{\end{list}}
\newcommand{\T}{\ensuremath{\mathbb{T}}}
\newcommand{\N}{\ensuremath{\mathbb{N}}} 
\newcommand{\R}{\ensuremath{\mathbb{R}}}
\newcommand{\Z}{\ensuremath{\mathbb{Z}}}
\newcommand{\cB}{\mathcal{B}}
\newcommand{\cC}{\mathcal{C}}
\newcommand{\cI}{\mathcal{I}}
\newcommand{\nLim}{\ensuremath{\lim_{n\rightarrow\infty}}}
\newcommand{\iLim}{\ensuremath{\lim_{i\rightarrow\infty}}}
\newcommand{\inergsum}{\ensuremath{\sum_{i=0}^{n-1}}}
\newcommand{\ntel}{\ensuremath{\frac{1}{n}}}
\newtheoremstyle{tobthm}{3pt}{3pt}{\itshape}{0pt}{\bfseries}{.}{0.5eM}{}
\theoremstyle{tobthm}
\newtheorem{definition}{Definition}[section]
\newtheorem{theorem}{Theorem}
\newtheorem{lemma}[theorem]{Lemma}
\newtheorem{corollary}[theorem]{Corollary}
\renewcommand{\epsilon}{\varepsilon}   
\newcommand{\tm}[1]{\mathbb{T}^{#1}}   
\newcommand{\cm}[1]{\mathcal{C}_{#1}}  
\newcommand{\hm}[1]{\mathcal{H}_{#1}}  
\newcommand{\rs}[1]{\varrho\left(#1\right)}      
\newcommand{\epsrs}[2]{\varrho^{#2}(#1)}    
\newcommand{\dH}{\mathrm{d}_{\mathrm{H}}}     
\newcommand{\convH}{\rightarrow_{\mathrm{H}}} 
\newtheoremstyle{tobrem}{3pt}{3pt}{\normalfont}{0pt}{\bfseries}{.}{0.5em}{}
\theoremstyle{tobrem}
\newtheorem{rem}[definition]{Remark}
\numberwithin{equation}{section}
\numberwithin{figure}{section}
\title{\large \textsc{Set-oriented numerical computation of rotation sets}}
\author{ K.~Polotzek, K.~Padberg-Gehle and T.~J\"ager}
\begin{document}

\setlength{\abovedisplayskip}{1.0ex}
\setlength{\abovedisplayshortskip}{0.8ex}

\setlength{\belowdisplayskip}{1.0ex}
\setlength{\belowdisplayshortskip}{0.8ex}
\maketitle

\begin{abstract}
 \noindent
 We establish a set-oriented algorithm for the numerical approximation of the
 rotation set of homeomorphisms of the two-torus homotopic to the identity. A
 theoretical background is given by the concept of $\eps$-rotation sets. These
 are obtained by replacing orbits with $\eps$-pseudo-orbits in the definition of
 the Misiurewicz-Ziemian rotation set and are shown to converge to the latter as
 $\eps$ decreases to zero. Based on this result, we prove the convergence of the
 numerical approximations as precision and iteration time tend to
 infinity. Further, we provide analytic error estimates for the algorithm under
 an additional boundedness assumption, which is known to hold in many relevant
 cases and in particular for non-empty interior rotation sets.\medskip

\noindent{\em 2010 MSC numbers. Primary: 65P99, Secondary: 37M25, 37E45 }
\end{abstract}

\section{Introduction}

\noindent
Rotation theory for orientation-preserving homeomorphisms on the circle was
established by H. Poincar\'{e} in 1885 \cite{poincare:1885}, who showed that the
long-term behaviour of such maps can be classified by the rotation number. This
topological invariant provides a dichotomy for the dynamics depending on whether
it is a rational or an irrational number, corresponding to periodic or
quasiperiodic motion, respectively.

For homeomorphisms of higher dimensional tori, it is well-known that a unique
rotation vector does not need to exist anymore. In
\cite{MisiurewiczZiemian1989RotationSets}, Misiurewicz and Ziemian therefore
introduce the \textit{rotation set} of a torus homeomorphism, which collects all
possible asymptotic rotation vectors and carries strong information about the
system's asymptotic behaviour. For homeomorphisms on the two-torus, this compact
non-empty set is always convex \cite{MisiurewiczZiemian1989RotationSets}. If
it has non-empty interior, then the system has positive topological entropy
\cite{llibre/mackay:1991} and all rational points in the interior of the
rotation set are realised by periodic orbits \cite{franks:1989}. Apart from
these nowadays classical results, considerable progress has been made in the
last years on the rotation theory of surface homeomorphisms
(e.g.\ \cite{beguin/crovisier/leroux:2007}--\nocite{Jaeger2011EllipticStars,KoropeckiTal2012StrictlyToral,KoropeckiTal2012BoundedandUnbounded,Davalos2013SublinearDiffusion}\cite{LeCalvezZanata2015RationalModeLocking}),
including partial results on the well-known Franks-Misiurewicz Conjecture
\cite{franks/misiurewicz:1990}--\nocite{LeCalvezTal2015ForcingTheory,JaegerPasseggi2015SemiconjugateToIrrational,JaegerTal2016IrrationalRotationFactors,Kocsard2016MinimalTH,KoropeckiPasseggiSambarino2016FMC}\cite{AvilaLiuXu2017FMCcounterexamples}
that excludes the existence of certain line segments as rotation sets.

Concerning the shape of the rotation set, it is further known that generically
-- in the $\cC^0$-topology -- it is a polygon with rational vertices
\cite{Passeggi2013RationalRotationSets}. For generic area-preserving torus
homeomorphisms, this polygon is non-degenerate (has non-empty interior)
\cite{Guiheneuf2015RotationSetComputation}. Moreover, all rational polygons can
be realised as the rotation set of a torus homeomorphism
\cite{kwapisz:1992}. For a particular parameter family of such maps, derived,
through some elaborate inverse limit construction, from symbolic systems related
to beta expansions, bifurcations of the rotation set involving new types of
convex sets have been described in
\cite{BoylandDeCarvalhoHall2016NewRotationSets} (see also \cite{kwapisz:1995}).
Apart from that, however, there is still little knowledge concerning the
question which compact convex subsets of the plane may appear. Likewise, there
is still not much insight into the behaviour of the rotation set in `natural'
parameter families of torus diffeomorphisms, such as those studied by
theoretical physicists in
\cite{LeboeufKurchanFeingoldArovas1990PhaseSpaceLocalization}.

One serious obstruction for further progress in this direction is the fact that,
except for some particular cases, it is usually not possible to analytically
determine the rotation set. Moreover, due to the inherent nonlinearity of the
problem, the numerical computation has proven to be difficult as well.  In
\cite{Guiheneuf2015RotationSetComputation}, pointwise approaches to the
numerical approximation of the rotation set are discussed. Based on the
detection of periodic orbits of the system on the one hand, and of a
discretisation of the system by the projection to a finite lattice on the other
hand, the complexity of this numerical task as well as the restrictions of the
proposed approaches are discussed. It turns out that an accurate computation is
already difficult in the case where the rotation set is still a rational
polygon, but its vertices correspond to periodic orbits of larger periods. When
the rotation set is not a rational polygon at all, it has to be expected that
the situation is even worse.

Thus, our aim here is to establish a more reliable algorithm for the numerical
computation of rotation sets based on set-oriented methods. On the theoretical
level, this corresponds to considering $\epsilon$-pseudo-orbits reflecting the
inaccuracies of numerical calculations. This approach leads to the definition of
an $\epsilon$\textit{-rotation set} in Section~\ref{sec_epsrotset}. In the
two-dimensional case, we prove that as the perturbation size $\epsilon > 0$
decreases to zero the respective $\epsilon$-rotation sets converge to the
original rotation set (Theorem \ref{main}).

In Section~\ref{sec_algo}, we formulate our set-oriented algorithm for the
numerical approximation of the rotation set of a homeomorphism $f: \tm{2}
\longrightarrow \tm{2}$ homotopic to the identity, which is defined via a lift
$F: \mathbb{R}^2 \longrightarrow \mathbb{R}^2$ of $f$. Based on the fact that
the rotation set is approximated with arbitrary precision in the Hausdorff
metric by the normalised iterates $\frac{1}{n}F^n([0,1]^2)$ of the unit square
(Corollary~\ref{Col_Fn_convH}), our algorithm aims to provide a good
visualisation of the latter sets for large $n \in \mathbb{N}$. This is backed up
by rigorous error estimates and a result on the convergence of the
approximations to the true rotation set as the precision and iteration time tend
to infinity (Theorem~\ref{Th_box_err_asympt}). If the considered system has a
shadowing property, this leads to a further considerable improvement of the
results on the convergence rate (Theorem \ref{t.shadowing}). By a systematic
overestimation of the system's orbits, we further ensure that the numerical
algorithm always yields a super-set of the actual rotation set. This is
particularly important since, as mentioned, previous approaches tend to `miss
out' the vertices of polygonal rotation sets if these are realised by periodic
orbits of large period \cite{Guiheneuf2015RotationSetComputation}.

Numerical results for some parameter families inspired by
\cite{MisiurewiczZiemian1989RotationSets,%
  Guiheneuf2015RotationSetComputation,LeboeufKurchanFeingoldArovas1990PhaseSpaceLocalization}
are then presented in Section \ref{sec_nums}. It turns out that in most cases
the algorithm performs much better than predicted by the numerical error
estimates. The reason for this is possibly the fact that the considered systems
possess a shadowing property. As mentioned, this leads to a faster convergence
of the approximations.
\bigskip

\bigskip
\noindent{\bf Acknowledgements.} TJ has been supported by a Heisenberg
professorship of the German Research Council (grant OE 538/6-1). TJ and KPG
acknowledge funding from EU Marie-Sk{\l}odowska-Curie Innovative Training
Network {\em Critical Transitions in Complex Systems} (H2020-MSCA-2014-ITN
643073 CRITICS).

\clearpage
\section{Preliminaries} \label{sec_prelims}

\noindent
By $\conv(A)$ we denote the closed convex hull of a set $A\ssq\R^m$. The open
$\eps$-neighbourhood of points or sets in $\R^m$ will be denoted by $B_\eps(x)$,
respectively $B_\eps(A)$. The closure of a set $A$ is denoted by
$\overline{A}$. By $\dH(A,B)$ we denote the Hausdorff distance between two
subsets $A,B$ of a metric space and also write $A_n\convH A$ if a sequence
$(A_n)_{n\in\mathbb{N}}$ converges to $A$ in the Hausdorff topology as $n\to\infty$.  

For $m \in \mathbb{N}$, we
let $\tm{m} =\mathbb{R}^m / \mathbb{Z}^m$ denote the $m$-dimensional torus. The
set of all lifts $F:\R^m\to\R^m$ of continuous maps $f: \tm{m} \longrightarrow
\tm{m}$ homotopic to the identity is denoted by $\cm{m}$. The subset of $\cm{m}$
consisting of lifts of torus homeomorphisms is denoted by~$\hm{m}$. Note that
$\cm{m}$ consists of all continuous functions $F: \mathbb{R}^m \longrightarrow
\mathbb{R}^m$ that satisfy
\begin{align}
  F(x+k) \ = \ F(x) + k
  \label{Eaddintvect}
\end{align}
for all $x \in \mathbb{R}^m, k\in \mathbb{Z}^m$, and $\hm{m}$ is the set of all
such $F$ which are in addition homeomorphisms of the plane.  In their seminal
paper \cite{MisiurewiczZiemian1989RotationSets} Misiurewicz and Ziemian
introduced the rotation set as
\begin{align*}
  \rs{F} \ = \ \left\{ v \in \mathbb{R}^m \; \Big| \; \exists n_i
    \nearrow\infty, x_i \in\mathbb{R}^m :\ \iLim \frac{F^{n_i}(x_i)-x_i}{n_i}
    \to v \right\}.
\end{align*}
Thus, $\rs{F}$ can be viewed as the collection of all possible rotation vectors
of the system. By writing
\begin{align}
  	K_k(F) \ = \ 
  	\left\{ \frac{F^k(x) - x}{k} \; \bigg| \; x \in [0,1]^m \right\}
  	\label{def_K_k}
\end{align}
for $k \in \mathbb{N}$, we alternatively obtain the rotation set as the upper
Hausdorff limit of the sequence $\left( K_n(F) \right)_{n \in \mathbb{N}}$, that
is,
\begin{align}
  	\rs{F} \ = \ \bigcap\limits_{n \geq 1} 
  	         \overline{\bigcup\limits_{k \geq n} K_k(F)} \ . 
  	\label{def_rot_set_sets}
\end{align}

\noindent
Moreover, in the two-dimensional case, the approximating sets $K_n(F)$ converge
to the rotation set in the Hausdorff distance.

\begin{lemma}[\cite{MisiurewiczZiemian1989RotationSets}] \label{main_Kn_convH} 
  Let $F \in \hm{2}$. Then
    $K_n(F) \convH \rs{F} \text{ as } n \to \infty.$
\end{lemma}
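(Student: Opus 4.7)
The conclusion $K_n(F)\convH\rs F$ is equivalent to the two one-sided Hausdorff inclusions $K_n(F)\ssq B_\eps(\rs F)$ and $\rs F\ssq B_\eps(K_n(F))$ holding for every $\eps>0$ and all sufficiently large $n$.

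The upper inclusion follows from (\ref{def_rot_set_sets}) together with a compactness argument. By (\ref{Eaddintvect}) the map $F-\Id$ descends to a continuous, hence bounded, function $\phi:\T^2\to\R^2$, so all $K_n(F)$ lie in a common compact subset of $\R^2$. If the inclusion failed along some $n_j\to\infty$, one could extract $x_{n_j}\in[0,1]^2$ with $(F^{n_j}(x_{n_j})-x_{n_j})/n_j$ converging to some $v\notin B_\eps(\rs F)$; but (\ref{def_rot_set_sets}) forces any such $v$ to lie in $\rs F$, a contradiction.

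The lower inclusion is the main work. By compactness of $\rs F$ it is enough, via a finite $\eps$-net and the triangle inequality, to produce for each $v$ in some finite subset of $\rs F$ a sequence $x_n\in[0,1]^2$ with $(F^n(x_n)-x_n)/n\to v$. I would use two ingredients of Misiurewicz--Ziemian: the variational identity $\rs F=\{\int\phi\,d\mu:\mu\text{ is }f\text{-invariant}\}$, and the convexity of $\rs F$ in dimension two. For \emph{ergodic} $v=\int\phi\,d\mu$, Birkhoff's ergodic theorem furnishes a $\mu$-generic point $\tilde x\in\R^2$, whose Birkhoff sums telescope via $\sum_{k=0}^{n-1}\phi(f^k(\tilde x))=F^n(\tilde x)-\tilde x$ to yield $(F^n(\tilde x)-\tilde x)/n\to v$; projecting $\tilde x$ into $[0,1)^2$ using (\ref{Eaddintvect}) gives the required points for all $n$ simultaneously. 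Convexity then represents every $v\in\rs F$ as a convex combination of ergodic rotation vectors, and a concatenation/interpolation argument exploiting the connectedness of $K_n(F)$ (continuous image of the connected unit square) together with planar topology realises such combinations by actual orbit segments for each sufficiently large $n$.

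\textbf{Main obstacle.} The delicate step is the interpolation for rotation vectors that are not themselves ergodic: Birkhoff produces these only as averages of invariant measures, but a generic point of a non-ergodic measure has its displacement ratio fixed by the ergodic component it belongs to, so a concrete orbit realising a prescribed convex combination is not immediate. Constructing such points for each large $n$ requires a planar pseudo-orbit/concatenation argument that brings together the two-dimensional convexity of $\rs F$ and the connectedness of $K_n(F)$; this is precisely where two-dimensionality plays a genuine role and is the heart of the Misiurewicz--Ziemian proof.
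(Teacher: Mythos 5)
Your upper inclusion ($K_n(F)\ssq B_\eps(\rs{F})$ for all large $n$, via boundedness of $F-\Id$ on the torus and the characterisation (\ref{def_rot_set_sets})) is correct. The genuine gap is in the lower inclusion, precisely at the ``concatenation/interpolation'' step that you flag as delicate but do not carry out, and which as described would not work. Knowing that every ergodic (hence every extreme) rotation vector is $\eps$-close to $K_n(F)$, and that $K_n(F)$ is connected, does \emph{not} force $K_n(F)$ to come close to the interior points of $\rs{F}$: a connected set can hug the boundary of a convex body while staying far from its centre (an annulus around a disc contains all extreme points of the disc and is connected, yet misses the centre by a definite amount). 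Since the statement to be proved is exactly that $K_n(F)$ eventually meets every $\eps$-ball centred at a point of $\rs{F}$, including interior points, this is the crux and not a technicality.

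The ingredient that closes the gap --- and the actual two-dimensional input in Misiurewicz--Ziemian --- is the almost-convexity of $F^n([0,1]^2)$: for $G\in\hm{2}$ one has $\conv\left(G([0,1]^2)\right)\ssq\overline{B_{\sqrt2}(G([0,1]^2))}$ (Lemma~\ref{l.MZ-estimate}), which applied to $G=F^n$ gives $\conv(K_n(F))\ssq\overline{B_{3\sqrt{2}/n}(K_n(F))}$ (Corollary~\ref{c.MZ-estimate}). Combined with the elementary inclusion $\rs{F}\ssq\conv(K_n(F))$ (Lemma~\ref{LProper3}, obtained by the subdivision/telescoping argument, not by ergodic theory), this yields $\rs{F}\ssq\overline{B_{3\sqrt2/n}(K_n(F))}$ directly, with an explicit $O(1/n)$ rate, and renders the detour through the variational identity, Birkhoff's theorem and the ergodic decomposition unnecessary. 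If you prefer to keep the ergodic-theoretic treatment of extreme points, you still need Lemma~\ref{l.MZ-estimate} (or an equivalent planar statement) to reach the interior of $\rs{F}$; connectedness of $K_n(F)$ alone cannot substitute for it.
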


\noindent
Furthermore, we have
\begin{theorem}[\cite{MisiurewiczZiemian1989RotationSets}] \label{Thm_convexity} Let $F \in \hm{2}$. Then the
  rotation set $\rs{F}$ is convex.
\end{theorem}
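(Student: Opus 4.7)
The plan is to realize $\rs{F}$ as the set of ``mean rotation vectors'' taken over all $f$-invariant Borel probability measures on $\tm{2}$. That set is automatically convex as the image of the convex set of invariant measures under an affine map, so once the two definitions are matched the result follows immediately.

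Concretely, I would introduce the displacement function $\phi_F:\tm{2}\to\R^2$ defined by $\phi_F(\pi(x))=F(x)-x$ for $x\in\R^2$, where $\pi:\R^2\to\tm{2}$ is the covering projection. This is well defined and continuous by the equivariance \eqref{Eaddintvect}. Writing $\cM_f$ for the set of $f$-invariant Borel probability measures on $\tm{2}$, set
\[
\rho_\mu(F) \;:=\; \left\{\int_{\tm{2}}\phi_F\,d\mu \;:\; \mu\in\cM_f\right\}.
\]
Since $\cM_f$ is convex and $\mu\mapsto\int\phi_F\,d\mu$ is affine, $\rho_\mu(F)$ is convex, and the whole task reduces to showing $\rs{F}=\rho_\mu(F)$.

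For the inclusion $\rho_\mu(F)\subseteq\rs{F}$ I would take any $\mu\in\cM_f$ and apply the Birkhoff ergodic theorem on each ergodic component: there exists $x\in\R^2$ such that $\tfrac1n\sum_{k=0}^{n-1}\phi_F(f^k(\pi(x)))\to\int\phi_F\,d\mu$. The key identity is that the sum telescopes on the lift,
\[
\sum_{k=0}^{n-1}\phi_F\bigl(f^k(\pi(x))\bigr)\;=\;F^n(x)-x,
\]
so the limit belongs to $\rs{F}$ by definition. For the reverse inclusion $\rs{F}\subseteq\rho_\mu(F)$, I would take $v\in\rs{F}$ with $v=\lim_i\bigl(F^{n_i}(x_i)-x_i\bigr)/n_i$ and form the empirical measures $\mu_i:=\tfrac1{n_i}\sum_{k=0}^{n_i-1}\delta_{f^k(\pi(x_i))}\in\cM(\tm{2})$. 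Using weak-$*$ compactness I would extract a limit $\mu_i\rightharpoonup\mu$; the standard Krylov–Bogolyubov estimate $\|f_*\mu_i-\mu_i\|\le 2/n_i$ shows $\mu\in\cM_f$. The telescoping identity again gives $\int\phi_F\,d\mu_i=(F^{n_i}(x_i)-x_i)/n_i\to v$, and continuity of $\phi_F$ together with the weak-$*$ convergence yields $\int\phi_F\,d\mu=v$, so $v\in\rho_\mu(F)$.

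The main obstacle is purely bookkeeping on the passage between $\R^2$ and $\tm{2}$: one must verify that $\phi_F$ descends to a continuous function on the torus (this is precisely what \eqref{Eaddintvect} provides), and that the telescoping identity makes sense because $\phi_F\circ f^k\circ\pi$ evaluated at $x$ equals $F^{k+1}(x)-F^k(x)$ independently of the choice of lift. Once this is checked, both inclusions above are routine. Note that this approach nowhere uses the dimension $m=2$; the same argument gives convexity of $\rs{F}$ in every dimension for the Misiurewicz–Ziemian definition \eqref{def_rot_set_sets}.
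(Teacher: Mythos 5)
The direction $\rs{F}\subseteq\rho_\mu(F)$ of your argument (empirical measures, the Krylov--Bogolyubov estimate, weak-$*$ limits, and the telescoping identity) is correct, and together with Birkhoff's theorem for ergodic measures it does establish $\conv(\rs{F})=\rho_\mu(F)$. The genuine gap is in the reverse inclusion $\rho_\mu(F)\subseteq\rs{F}$, and it is not a bookkeeping issue about lifts: for a \emph{non-ergodic} invariant measure $\mu$ there is in general no point $x$ with $\frac1n\sum_{k=0}^{n-1}\phi_F(f^k(\pi(x)))\to\int\phi_F\,d\mu$. Applying Birkhoff ``on each ergodic component'' produces points realizing the rotation vectors of the individual components, not of their average: if $\mu=\frac12(\mu_1+\mu_2)$ with $\mu_1,\mu_2$ ergodic and rotation vectors $v_1\neq v_2$, then $\mu$-a.e.\ point has Birkhoff average $v_1$ or $v_2$, never $\frac12(v_1+v_2)$. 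What you would actually need is that a convex combination of realized rotation vectors is again an accumulation point of $(F^{n_i}(x_i)-x_i)/n_i$ for suitable $n_i$ and $x_i$ --- but that is precisely the assertion of the theorem, so the argument is circular at exactly the point where the work has to be done.

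Your closing remark is the clearest symptom that something is wrong: the statement your argument would prove --- convexity of $\rs{F}$ in every dimension --- is false. Misiurewicz and Ziemian exhibited homeomorphisms of the three-torus isotopic to the identity whose rotation set is not convex, while $\rho_\mu(F)$ is always convex; hence $\rho_\mu(F)\subseteq\rs{F}$ genuinely fails in dimension $\geq 3$, and any correct proof must use planar topology. In the cited argument of \cite{MisiurewiczZiemian1989RotationSets} the two-dimensionality enters through Lemma~\ref{l.MZ-estimate}: for $G\in\hm{2}$ the set $G([0,1]^2)$ lies within Hausdorff distance $\sqrt2$ of its own convex hull. This yields $\dH\left(K_n(F),\conv(K_n(F))\right)\leq \nicefrac{3\sqrt2}{n}\to 0$ (Corollary~\ref{c.MZ-estimate}), which combined with Lemma~\ref{LProper3} and Lemma~\ref{main_Kn_convH} exhibits $\rs{F}$ as a Hausdorff limit of asymptotically convex sets and hence as a convex set. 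That estimate depends on $G$ being a homeomorphism of the plane commuting with integer translations and has no analogue in higher dimensions.
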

\begin{lemma}[\cite{MisiurewiczZiemian1989RotationSets}] \label{LProper3}
  Let $F \in \cm{m}$. Then $\rs{F} \subseteq \conv(K_n(F))$ for all $n\in\mathbb{N}$.
\end{lemma}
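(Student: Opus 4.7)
The plan is to start with an arbitrary $v \in \rs{F}$ and sequences $n_i \nearrow \infty$ and $x_i \in \R^m$ with $(F^{n_i}(x_i)-x_i)/n_i \to v$, then show that $v$ lies in $\conv(K_n(F))$ for any fixed $n \in \N$. The main idea is a telescoping-and-averaging argument that exploits the $\Z^m$-equivariance \eqref{Eaddintvect}.

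Fix $n \in \N$ and write $n_i = q_i n + r_i$ with $0 \le r_i < n$. I would telescope
\begin{align*}
  F^{n_i}(x_i) - x_i \ = \ \sum_{j=0}^{q_i-1}\bigl(F^{(j+1)n}(x_i) - F^{jn}(x_i)\bigr) + \bigl(F^{n_i}(x_i) - F^{q_i n}(x_i)\bigr)
\end{align*}
and use \eqref{Eaddintvect} to observe that $F^n(y) - y$ depends only on $y$ modulo $\Z^m$. Hence each summand in the sum equals $F^n(\tilde y_j) - \tilde y_j$ for some representative $\tilde y_j \in [0,1]^m$, which puts $(F^n(\tilde y_j) - \tilde y_j)/n$ into $K_n(F)$. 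Consequently
\begin{align*}
  w_i \ := \ \frac{1}{q_i}\sum_{j=0}^{q_i-1}\frac{F^n(\tilde y_j) - \tilde y_j}{n} \ \in \ \conv(K_n(F)).
\end{align*}

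Dividing the telescoping identity by $n_i$ gives
\begin{align*}
  \frac{F^{n_i}(x_i)-x_i}{n_i} \ = \ \frac{q_i n}{n_i}\, w_i + \frac{F^{n_i}(x_i)-F^{q_i n}(x_i)}{n_i}.
\end{align*}
Applying \eqref{Eaddintvect} once more, the remainder $F^{n_i}(x_i) - F^{q_i n}(x_i)$ can be written as $F^{r_i}(z_i) - z_i$ with $z_i \in [0,1]^m$ and $0 \le r_i < n$; since $F$ is continuous and $[0,1]^m$ compact, this quantity is uniformly bounded, so dividing by $n_i \to \infty$ yields a vanishing contribution. Combined with $q_i n/n_i \to 1$ and the boundedness of $w_i$ (as $\conv(K_n(F))$ is the convex hull of the compact set $K_n(F)$, hence compact), I would conclude that $w_i \to v$. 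Since $\conv(K_n(F))$ is closed, this forces $v \in \conv(K_n(F))$, completing the proof.

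The only real subtlety is the bookkeeping in step two: one has to make sure that the base points $\tilde y_j$ used to land in $K_n(F)$ are chosen in $[0,1]^m$, which is exactly where \eqref{Eaddintvect} is needed. Everything else is a standard averaging argument, and unlike Lemma~\ref{main_Kn_convH} the statement holds for arbitrary $F \in \cm{m}$ (no homeomorphism assumption and no restriction to dimension two), since convexity of $\rs{F}$ is never used.
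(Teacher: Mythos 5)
Your argument is correct, and it is essentially the approach the paper takes: the paper cites Lemma~\ref{LProper3} from Misiurewicz--Ziemian without proof, but its proof of the generalisation Lemma~\ref{difficultest} uses exactly your block decomposition $n_i = q_i n + r_i$, the averaging of the length-$n$ pieces into $\conv(K_n(F))$, and the $O(1)$ bound on the remainder of length $r_i < n$ (there phrased for $\eps$-pseudo-orbits and via the representation \eqref{def_rot_set_sets} rather than the sequential definition). Your bookkeeping with \eqref{Eaddintvect} to place the base points in $[0,1]^m$ is sound, and setting $\eps=0$ in the paper's argument recovers precisely your proof.
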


\noindent
A generalisation of Lemma \ref{LProper3} to $\epsilon$-rotation sets is given by
Lemma \ref{difficultest} below.  An important technical estimate in
\cite{MisiurewiczZiemian1989RotationSets} is the following.
\begin{lemma}[\cite{MisiurewiczZiemian1989RotationSets}]\label{l.MZ-estimate}
  Let $G\in\hm{2}$. Then $\conv \left( G([0,1]^2) \right) \subseteq
  \overline{B_{\sqrt{2}}(G([0,1]^2))}$.
\end{lemma}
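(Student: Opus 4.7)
Write $A := G([0,1]^2)$. Since $G$ is a plane homeomorphism satisfying $G(x+k) = G(x) + k$ for all $k \in \Z^2$, the image $A$ is a compact topological disk whose boundary $\gamma := G(\partial [0,1]^2)$ is a Jordan curve; and its $\Z^2$-translates $\{A+k : k \in \Z^2\}$ cover $\R^2$ with pairwise disjoint interiors, i.e., they tile the plane. The plan is to show, for every $p \in \conv(A)$, the existence of some $a \in A$ with $|p - a| \leq \sqrt 2$; the case $p \in A$ being trivial, assume $p \in \conv(A) \setminus A$.

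The first step is to represent $p$ as a chord of $A$. Since $A$ is compact and path-connected in $\R^2$ (as the continuous image of $[0,1]^2$), its convex hull admits the standard representation $\conv(A) = \bigcup_{a_1, a_2 \in A} [a_1, a_2]$; so I may write $p = (1-\lambda) a_1 + \lambda a_2$ with $a_1, a_2 \in A$ and $\lambda \in [0,1]$. Pulling back through $G$, the preimages $x_i := G^{-1}(a_i)$ lie in $[0,1]^2$, and the diameter bound for the unit square gives the crucial estimate $|x_1 - x_2| \leq \sqrt 2$.

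The second step is to use the tiling to locate a point of $A$ near $p$. By the $\Z^2$-tiling, $p$ lies in some cell $A + k$ with $k \in \Z^2$; then $p - k \in A$ and $\dist(p, A) \leq |k|$. One expects the bound $|k| \leq \sqrt 2$ to emerge from the following heuristic: the straight chord $[a_1, a_2]$ traverses a sequence of tiles of the tiling, the adjacent tiles along this sequence differ by one of the unit vectors $\pm e_1$, $\pm e_2$, and the total integer displacement between the starting and ending tiles is controlled by $|x_1 - x_2| \leq \sqrt 2$. More precisely, I plan to compare the straight chord $[a_1, a_2]$ with the curve $t \mapsto G((1-t)x_1 + t x_2)$, which lies in $A$ and has the same endpoints as the chord, and to deduce the required bound on $|k|$ from the way these two curves cross cell boundaries.

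The main obstacle is exactly this second step: $G$ can distort distances arbitrarily via its $\Z^2$-periodic part $\psi := G - \Id$, so a naive comparison of the straight chord with the $G$-image of the preimage chord provides no universal bound. The factor $\sqrt 2$ must therefore be extracted from a careful topological argument --- using the Jordan curve structure of $\gamma$ and the fact that translates $A + k$ glue along their edges in accordance with the integer lattice --- that promotes the preimage bound $|x_1 - x_2| \leq \sqrt 2$ into the integer-shift bound $|k| \leq \sqrt 2$. I expect this to reduce to a case analysis on which edges of $\partial [0,1]^2$ contain $x_1$ and $x_2$, the worst case being $k = (\pm 1, \pm 1)$ of norm exactly $\sqrt 2$.
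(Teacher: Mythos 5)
The paper itself does not prove this lemma --- it is quoted from Misiurewicz--Ziemian --- so your argument has to stand on its own, and it has a genuine gap: the entire difficulty is concentrated in your ``second step'', which you present only as a plan, a heuristic and an expectation. Your first step (the tiling of $\R^2$ by the translates $A+k$, the two-point Carath\'eodory representation $p=(1-\lambda)a_1+\lambda a_2$ valid for connected planar sets, and the bound $|x_1-x_2|\le\sqrt 2$ on the preimages) is correct but elementary, and none of it yet touches the reason the lemma is true. Moreover, the claim you defer is strictly stronger than the lemma: showing that $p$ lies in a tile $A+k$ with $|k|\le\sqrt 2$ amounts to $\conv(A)\ssq\bigcup_{\|k\|_\infty\le 1}(A+k)=G([-1,2]^2)$, whereas the lemma only asserts $\dist(p,A)\le\sqrt 2$; a point of $\conv(A)$ could a priori be within $\sqrt 2$ of a thin ``tentacle'' of $A$ while being contained only in a tile $A+k$ with $|k|$ large, so even if your stronger statement is true it needs its own proof.

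The heuristic you offer does not close this gap. Consecutive tiles met by the chord do differ by a vector of $\ell^\infty$-norm at most $1$ (since $A\cap(A+k)=G\bigl([0,1]^2\cap([0,1]^2+k)\bigr)=\emptyset$ for $\|k\|_\infty\ge 2$), but the chord may cross arbitrarily many walls, so adjacency alone gives no bound on the index of the tile containing $p$; one would need to control the \emph{net} signed wall crossings, and the estimate $|x_1-x_2|\le\sqrt 2$ does not do this --- it only records that both endpoints lie in the tile $A+0$, which you already knew. Likewise, the comparison curve $t\mapsto G((1-t)x_1+tx_2)$ stays entirely inside $A$ and crosses no walls, so ``comparing how the two curves cross cell boundaries'' is vacuous without a further homological or Jordan-curve argument that you do not supply. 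The closing suggestion that matters reduce to a case analysis on which edges of $\partial[0,1]^2$ contain $x_1$ and $x_2$ is off target: these points need not lie on $\partial[0,1]^2$ at all, and the obstruction is global (how the straight chord interacts with the whole tiling), not local at the endpoints. As written, the proposal is a correct setup followed by an unproved assertion; the topological core of the Misiurewicz--Ziemian estimate is missing.
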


\noindent
Applied to $G=F^n$ and taking into account that
\[
\dH \left( K_n(F), \frac{F^n([0,1]^2)}{n}  \right) \ <\ \frac{\sqrt{2}}{n}
\ ,
\]
 this immediately implies
the following consequence.
\begin{corollary}\label{c.MZ-estimate}
  If $F\in\hm{2}$ and $n\in\N$, then 
\[
   \conv(K_n(F))\ \ssq \ \overline{B_{\frac{3\sqrt{2}}{n}}(K_n(F))} \ .
\]
\end{corollary}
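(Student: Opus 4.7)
The plan is to combine Lemma \ref{l.MZ-estimate} with the displayed Hausdorff bound between $K_n(F)$ and the rescaled image $F^n([0,1]^2)/n$, paying attention that the convex hull operation plays well with $\epsilon$-neighbourhoods of convex sets. Since $\hm{2}$ is closed under composition, I can apply Lemma \ref{l.MZ-estimate} with $G = F^n$ to obtain $\conv(F^n([0,1]^2)) \subseteq \overline{B_{\sqrt 2}(F^n([0,1]^2))}$, and dividing everything by $n$ yields
\[
\conv\!\bigl(F^n([0,1]^2)/n\bigr) \ \ssq \ \overline{B_{\sqrt{2}/n}\!\bigl(F^n([0,1]^2)/n\bigr)}.
\]

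Next I would record the elementary set-theoretic fact that if $X\ssq\R^m$ and $r\geq 0$, then $\conv(X) \ssq \overline{B_r(\conv(X))}$ is a convex set (being the Minkowski sum of $\conv(X)$ with a closed ball), which forces $\conv(\overline{B_r(X)}) \ssq \overline{B_r(\conv(X))}$. Together with the triangle-type estimate $\overline{B_r(\overline{B_s(X)})} \ssq \overline{B_{r+s}(X)}$, this lets me propagate the bound through convex hulls and pseudo-orbit type perturbations without losing control of the constants.

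Now I combine: the displayed inequality $\dH(K_n(F), F^n([0,1]^2)/n) < \sqrt 2/n$ means $K_n(F) \ssq \overline{B_{\sqrt 2/n}(F^n([0,1]^2)/n)}$, and the convex-hull estimate gives
\[
\conv(K_n(F)) \ \ssq \ \overline{B_{\sqrt{2}/n}\!\bigl(\conv(F^n([0,1]^2)/n)\bigr)} \ \ssq \ \overline{B_{2\sqrt{2}/n}\!\bigl(F^n([0,1]^2)/n\bigr)},
\]
where the second inclusion uses the scaled Lemma \ref{l.MZ-estimate}. Finally, applying the Hausdorff bound $\dH(F^n([0,1]^2)/n, K_n(F)) < \sqrt 2/n$ once more absorbs the rescaled image back into a neighbourhood of $K_n(F)$, producing $\conv(K_n(F)) \ssq \overline{B_{3\sqrt 2/n}(K_n(F))}$ as desired.

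There is no real obstacle here; the only thing to be slightly careful about is the interaction of the convex hull with the $\epsilon$-fattening, which is handled by the convexity of $\overline{B_r(\conv(X))}$ noted above. All constants add up cleanly to $3\sqrt 2/n$, matching the statement.
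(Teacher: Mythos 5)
Your argument is correct and is essentially the paper's own proof: apply Lemma~\ref{l.MZ-estimate} to $G=F^n$, rescale by $n$, and use the displayed Hausdorff bound $\dH(K_n(F),F^n([0,1]^2)/n)<\sqrt{2}/n$ twice to pass back and forth, so the three contributions of $\sqrt{2}/n$ add to $3\sqrt{2}/n$. The paper leaves this as ``immediate''; you have merely spelled out the (correct) convexity bookkeeping, namely $\conv(\overline{B_r(X)})\ssq\overline{B_r(\conv(X))}$.
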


\section{The $\boldsymbol{\epsilon}$-rotation set} \label{sec_epsrotset}

\noindent
In order to model numerical approximations of rotation sets, we take into
account the fact that inaccuracies are inherent to computer
simulations. While the definition of the rotation set is based on proper orbits
of the underlying torus map, we introduce an alternative by allowing
perturbations of the system's orbits. This corresponds to the well-known concept
of $\epsilon$-pseudo-orbits and leads to the notion of $\eps$-rotation
sets. Since the start of this project, these have also been described
independently by Guiheneuf and Koropecki in
\cite{GuiheneufKoropecki2016RotationSetStability}.

Let $(X,\mathrm{d})$ be a metric space and $F: X \longrightarrow X$ a
self-map. For $n \in \mathbb{N}$ and $\epsilon \geq 0$, an $(n+1)$-tuple
$(\xi_j)_{j=0}^{n}$ of points $\xi_j \in X$ is called
an~$\epsilon$\textit{-pseudo-orbit} of length $n\!+\!1$ if
  \begin{align}
     \mathrm{d}\left(F(\xi_j),\xi_{j+1}\right) \ \leq \ \epsilon
     \label{def_eps_orbit}
  \end{align}
  for all $j \in \{0,\ldots,n-1\}$. In the same way infinite sequences
  $(\xi_j)_{j=0}^{\infty}$ or $(\xi_j)_{j=-\infty}^{\infty}$ satisfying
  (\ref{def_eps_orbit}) for all $j \in \mathbb{N}_0$ or $j\in\mathbb{Z}$,
  respectively, are called (infinite) $\epsilon$-pseudo-orbits. Obviously, every
  proper orbit is an $\eps$-pseudo-orbit for all $\eps>0$. Note that we slightly
  deviate from the standard definition by not requiring strictness of the
  inequality in (\ref{def_eps_orbit}). This will be very convenient later on for
  technical reasons, but has no significance on a conceptual level.

For $F \in \cm{m}$ and $\epsilon \geq 0$, we now define the
$\epsilon$\textit{-rotation set} to be the set of all accumulation points of
sequences of the form
\begin{align*}
  \left(\frac{\xi_{n_i}^i - \xi_0^i}{n_i}\right)_{i \in \mathbb{N}}
\end{align*}
where $n_i \to \infty$ as $i \to \infty$ and $(\xi^i_j)_{j=0}^{n_i}$ is an
$\epsilon$-pseudo-orbit of $F$ of length $n_i+1$ for each $i
\in \mathbb{N}$. Analogous to (\ref{def_rot_set_sets}), we let 
\begin{align*}
  K_k^\epsilon(F)  
  \ = \   \left\{ \frac{\xi_k-\xi_0}{k} \; \bigg| \;
                     \left( \xi_j \right)_{j=0}^{k} \text{ is an } 
                     \epsilon\text{-pseudo-orbit of } F 
                     \text{ with } \xi_0  \in [0,1]^m
             \right\}
\end{align*}
for $k \in \mathbb{N}$ and can alternatively define the $\epsilon$-rotation set
by
\begin{align}
  \epsrs{F}{\epsilon} 
 \ = \ \bigcap\limits_{n \geq 1} 
    \overline{\bigcup\limits_{k \geq n} K_k^\epsilon(F)}\ . 
    \label{intersecteps}
\end{align}
Note that since $F$ commutes with integer translations, one could also replace
$\xi_0 \in [0,1]^m$ by $\xi_0\in\R^m$ in the definition of $K_k^\eps(F)$.
For every $k \in \mathbb{N}$, the inclusion $K_k(F) \subseteq K_k^{\epsilon}(F)$
is apparent, so that due to (\ref{def_rot_set_sets}) and
(\ref{intersecteps}), we obtain 
\begin{align}
  \rs{F} \ \subseteq \ \epsrs{F}{\epsilon}
  \label{Erset_subs_epsrs}
\end{align}
for all $\epsilon \geq 0$. We omit the elementary proof of the following lemma.
\begin{lemma}\label{props}
  Let $F \in \cm{m}, \epsilon > 0$ and $M = \max\nolimits_{x\in [0,1]^m} \|F(x)-x\|$. Then 
  \begin{itemize}
    \item[(i)] for each $k \in \mathbb{N}$ the set $K_k^{\epsilon}(F)$ is non-empty 
      and compact, and thus the same holds for  $\epsrs{F}{\epsilon}$;
    \item[(ii)] \label{props2} 
      $\rs{F} \subseteq \overline{B_M(0)}$ 
      and $\epsrs{F}{\epsilon} \subseteq \overline{B_{M+\epsilon}(0)}$.
  \end{itemize}
\end{lemma}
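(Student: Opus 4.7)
The proof is entirely routine, so the plan is to organise the two statements so that part (ii) is established first (in a form that already gives the uniform bound needed to handle part (i) for the intersection $\epsrs{F}{\epsilon}$).

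The plan is to start by exploiting the identity $F(x+k) = F(x) + k$ from \eqref{Eaddintvect}: it implies that $x \mapsto F(x) - x$ is $\Z^m$-periodic, hence bounded by $M$ on all of $\R^m$ since $M$ is attained on the fundamental domain $[0,1]^m$. By a telescoping argument $\|F^k(x) - x\| \le \sum_{j=0}^{k-1} \|F^{j+1}(x) - F^j(x)\| \le kM$, which immediately gives $K_k(F) \subseteq \overline{B_M(0)}$ and thus, by \eqref{def_rot_set_sets}, $\rs{F} \subseteq \overline{B_M(0)}$. For an $\epsilon$-pseudo-orbit the triangle inequality
\[
\|\xi_{j+1} - \xi_j\| \ \le \ \|\xi_{j+1} - F(\xi_j)\| + \|F(\xi_j) - \xi_j\| \ \le \ \epsilon + M
\]
telescopes to $\|\xi_k - \xi_0\| \le k(M+\epsilon)$, so $K_k^{\epsilon}(F) \subseteq \overline{B_{M+\epsilon}(0)}$ for every $k$, and therefore $\epsrs{F}{\epsilon} \subseteq \overline{B_{M+\epsilon}(0)}$. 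This finishes part (ii).

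For part (i), non-emptiness of $K_k^\eps(F)$ is trivial since any proper orbit starting in $[0,1]^m$ is an $\epsilon$-pseudo-orbit. For compactness, I would represent $K_k^\epsilon(F)$ as the continuous image of the compact product space $[0,1]^m \times \overline{B_\epsilon(0)}^{k}$ under the map
\[
(\xi_0, e_0, \ldots, e_{k-1}) \ \longmapsto \ \frac{\xi_k - \xi_0}{k}\ ,
\qquad \xi_{j+1} := F(\xi_j) + e_j\ ,
\]
which is well-defined and continuous by continuity of $F$; any $\epsilon$-pseudo-orbit arises in this way.

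Finally, for $\epsrs{F}{\epsilon}$ itself, the sets $C_n := \overline{\bigcup_{k \ge n} K_k^\epsilon(F)}$ form a nested decreasing sequence, each $C_n$ is closed, non-empty (containing $K_n^\epsilon(F)$) and, by the uniform bound from part (ii), contained in the compact set $\overline{B_{M+\epsilon}(0)}$. Hence $C_n$ is compact, and the intersection $\epsrs{F}{\epsilon} = \bigcap_{n \ge 1} C_n$ is non-empty and compact by the finite intersection property. The only mildly delicate point — and thus the place to be careful — is the use of the boundedness from (ii) to upgrade closed sets to compact sets before invoking Cantor's intersection theorem; everything else is a direct unpacking of the definitions.
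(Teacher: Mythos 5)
Your proof is correct and complete; the paper itself omits the proof of this lemma as elementary, so there is nothing to compare it against. One point worth noting: your compactness argument for $K_k^{\epsilon}(F)$ — realising it as the continuous image of $[0,1]^m\times\overline{B_\epsilon(0)}^{\,k}$ — relies on the error vectors ranging over a \emph{closed} ball, which is exactly why the paper adopts the non-strict inequality in (\ref{def_eps_orbit}) "for technical reasons"; your argument makes that choice do the work it was intended for.
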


\noindent
We now aim to show convergence of the $\eps$-rotation sets as $\eps$ decreases
to zero. This presents the main result of this section. An alternative proof can
be found in \cite{GuiheneufKoropecki2016RotationSetStability}. However, we
include the proof both for the convenience of the reader and because the
employed arguments and estimates will become crucial again in
Section~\ref{sec_algo}.
 
 \clearpage
\begin{theorem} \label{main}
  Let $F \in \hm{2}$. Then 
    $\epsrs{F}{\epsilon} \convH \rs{F} \text{ as } \epsilon \to 0.$
\end{theorem}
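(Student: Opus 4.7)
The plan is as follows. Since $\rs{F} \ssq \epsrs{F}{\eps}$ holds for every $\eps \geq 0$ by (\ref{Erset_subs_epsrs}), only one side of the Hausdorff convergence is nontrivial: for any $\eta > 0$, I need to produce $\eps_0 > 0$ such that $\epsrs{F}{\eps} \ssq \overline{B_\eta(\rs{F})}$ for every $\eps \in [0,\eps_0]$. The overall strategy is to approximate $\rs{F}$ from outside by $\conv(K_n(F))$ at a fixed large scale $n$ (via Lemma \ref{main_Kn_convH} and the convexity of $\rs{F}$ from Theorem \ref{Thm_convexity}), and then to compare a long $\eps$-pseudo-orbit with a concatenation of true $F$-orbit pieces of length $n$ starting at the same basepoints.

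First I would use Lemma \ref{main_Kn_convH} to fix $n \in \N$ with $K_n(F) \ssq B_{\eta/2}(\rs{F})$; since the distance function to the convex set $\rs{F}$ is itself convex, $B_{\eta/2}(\rs{F})$ is convex, so in fact $\conv(K_n(F)) \ssq B_{\eta/2}(\rs{F})$. Next, since $F$ descends to a continuous self-map of the compact torus $\tm{2}$, it is uniformly continuous on $\R^2$; let $\omega_F$ denote its modulus of continuity, so that $\omega_F$ is continuous at $0$ with $\omega_F(0) = 0$. Setting $e_0 = 0$ and $e_{j+1}(\eps) = \omega_F(e_j(\eps)) + \eps$, an easy induction comparing an $\eps$-pseudo-orbit piece $(\xi_{an+j})_{j=0}^n$ with the true orbit $(F^j(\xi_{an}))_{j=0}^n$ yields $\|\xi_{(a+1)n} - F^n(\xi_{an})\| \leq e_n(\eps)$. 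Because $e_n$ is continuous at $\eps = 0$ with $e_n(0) = 0$, I can then choose $\eps_0 > 0$ with $e_n(\eps)/n < \eta/2$ for all $\eps \in [0, \eps_0]$.

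With these preparations, for any $\eps$-pseudo-orbit $(\xi_j)_{j=0}^k$ with $\xi_0 \in [0,1]^2$ and $k = \ell n + r$, $0 \leq r < n$, the decomposition
\[
  \xi_k - \xi_0 \ = \ \sum_{a=0}^{\ell-1}\bigl(F^n(\xi_{an}) - \xi_{an}\bigr) + \sum_{a=0}^{\ell-1}\bigl(\xi_{(a+1)n} - F^n(\xi_{an})\bigr) + \bigl(\xi_{\ell n + r} - \xi_{\ell n}\bigr)
\]
splits $(\xi_k-\xi_0)/k$ into a convex-average part and an error. By the $\Z^2$-equivariance (\ref{Eaddintvect}), each $(F^n(\xi_{an}) - \xi_{an})/n$ lies in $K_n(F)$, so the averaged first sum lies in $\conv(K_n(F))$; the middle sum is controlled by $\ell\, e_n(\eps)$ via the single-block bound; and the remainder is controlled by $n(M + \eps)$, with $M$ as in Lemma \ref{props}. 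A short triangle-inequality computation then yields
\[
  \dist\!\left(\frac{\xi_k - \xi_0}{k},\ \conv(K_n(F))\right) \ \leq \ \frac{e_n(\eps)}{n} + \frac{2M + \eps}{\ell}.
\]
Taking $k \to \infty$ (hence $\ell \to \infty$) in the definition of $\epsrs{F}{\eps}$ then gives $\epsrs{F}{\eps} \ssq \overline{B_{e_n(\eps)/n}(\conv(K_n(F)))} \ssq \overline{B_\eta(\rs{F})}$ by the choices of $n$ and $\eps_0$.

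The main obstacle is the order of quantifiers in the single-block error-propagation estimate: the iterated modulus $e_n(\eps)$ may grow very rapidly with $n$ when $F$ is merely continuous rather than Lipschitz, so it is essential to fix $n$ first from the Hausdorff convergence $K_n(F) \convH \rs{F}$, and only afterwards to shrink $\eps$ in order to make $e_n(\eps)/n$ small.
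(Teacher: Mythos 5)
Your proposal is correct and takes essentially the same route as the paper: the paper also fixes $n$ first via Lemma~\ref{main_Kn_convH}, splits long $\eps$-pseudo-orbits into length-$n$ blocks plus a remainder (Lemma~\ref{difficultest}), controls the per-block deviation from true orbits by continuity for fixed $n$ (Lemma~\ref{Prop_knconv}), and concludes by convexity of $\rs{F}$. The only difference is presentational: you merge the two lemmas into one computation landing directly in a neighbourhood of $\conv(K_n(F))$, whereas the paper factors through the intermediate sets $\conv(K_n^\eps(F))$.
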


\noindent
Since for $\rs{F} \ssq \epsrs{F}{\epsilon}$ for all $\eps>0$ by
(\ref{Erset_subs_epsrs}), it remains to show that for every $\delta > 0$ there
is an $\epsilon > 0$ such that
\begin{align}
  \epsrs{F}{\epsilon} \ \subseteq \ B_\delta(\rs{F}) \ .
  \label{main2}
\end{align}
We split the proof into Lemmas~\ref{difficultest} and \ref{Prop_knconv},
beginning with a formulation of Lemma~\ref{LProper3} in terms of
$\epsilon$-rotation sets.
 
\begin{lemma} \label{difficultest}
  Let $F \in \cm{m}$ and $\epsilon \geq 0$. Then
    $\epsrs{F}{\epsilon} \subseteq \conv (K_n^{\epsilon}(F))$
    for all $n \in \mathbb{N}$.
\end{lemma}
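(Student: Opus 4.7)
The plan is to adapt the classical telescoping argument used to prove Lemma~\ref{LProper3} to the setting of $\epsilon$-pseudo-orbits. Fix $n \in \N$ and let $v \in \epsrs{F}{\epsilon}$. By the definition of the $\epsilon$-rotation set, there exist $\epsilon$-pseudo-orbits $(\xi^i_j)_{j=0}^{N_i}$ with $N_i \to \infty$ such that $(\xi^i_{N_i} - \xi^i_0)/N_i \to v$. Writing $N_i = q_i n + r_i$ with $0 \leq r_i < n$, I would decompose
$$
\frac{\xi^i_{N_i} - \xi^i_0}{N_i}
\ = \ \frac{q_i n}{N_i}\cdot \frac{1}{q_i}\sum_{\ell=0}^{q_i-1} \frac{\xi^i_{(\ell+1)n}-\xi^i_{\ell n}}{n} \ + \ \frac{\xi^i_{N_i} - \xi^i_{q_i n}}{N_i}.
$$

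The first key step is to show that each summand $(\xi^i_{(\ell+1)n}-\xi^i_{\ell n})/n$ lies in $K_n^\epsilon(F)$. For this I would exploit the identity~\eqref{Eaddintvect}: choosing $k_\ell \in \Z^m$ with $\xi^i_{\ell n}-k_\ell \in [0,1]^m$, the translated finite sequence $(\xi^i_{\ell n + j}-k_\ell)_{j=0}^{n}$ is still an $\epsilon$-pseudo-orbit (because translation by $k_\ell$ commutes with $F$ and does not change any of the defining differences), and it starts in $[0,1]^m$. Therefore
$$
\frac{\xi^i_{(\ell+1)n}-\xi^i_{\ell n}}{n} \ = \ \frac{(\xi^i_{(\ell+1)n}-k_\ell)-(\xi^i_{\ell n}-k_\ell)}{n} \ \in \ K_n^\epsilon(F),
$$
so the average $w_i := \frac{1}{q_i}\sum_{\ell=0}^{q_i-1}(\xi^i_{(\ell+1)n}-\xi^i_{\ell n})/n$ is a convex combination of elements of $K_n^\epsilon(F)$ and hence lies in $\conv(K_n^\epsilon(F))$.

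The second step is to control the remainder $(\xi^i_{N_i} - \xi^i_{q_i n})/N_i$. Using integer-translation invariance together with the compactness of $[0,1]^m$, one has $\|F(y)-y\| \leq M$ for every $y \in \R^m$, so each pseudo-orbit step satisfies $\|\xi^i_{j+1}-\xi^i_j\| \leq \|F(\xi^i_j)-\xi^i_j\|+\|\xi^i_{j+1}-F(\xi^i_j)\| \leq M+\epsilon$. Consequently $\|\xi^i_{N_i}-\xi^i_{q_i n}\| \leq r_i(M+\epsilon) < n(M+\epsilon)$, and the remainder is bounded in norm by $n(M+\epsilon)/N_i \to 0$.

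Finally, since $\conv(K_n^\epsilon(F))$ is compact by Lemma~\ref{props}(i), I would extract a subsequence along which $w_i$ converges to some $w \in \conv(K_n^\epsilon(F))$. Combined with $q_i n/N_i \to 1$ and the vanishing of the remainder, the displayed decomposition yields $v = w \in \conv(K_n^\epsilon(F))$. The only step that requires a little care is the translation argument in the first key step, which ensures that the telescoping blocks of a pseudo-orbit anchored arbitrarily in $\R^m$ genuinely contribute elements of $K_n^\epsilon(F)$ (defined via basepoints in $[0,1]^m$); everything else reduces to the standard telescoping-plus-remainder scheme from \cite{MisiurewiczZiemian1989RotationSets}.
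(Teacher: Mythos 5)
Your proof is correct and follows essentially the same telescoping-plus-remainder argument as the paper: blocks of length $n$ contribute elements of $K_n^{\epsilon}(F)$ (after integer translation of the basepoint into $[0,1]^m$, which the paper handles via the remark following the definition of $K_k^{\epsilon}(F)$), while the leftover segment of length $r_i<n$ is bounded in norm by $r_i(M+\epsilon)$ and vanishes after normalisation. The only cosmetic difference is that you argue with sequences and a compactness extraction, whereas the paper records the same estimates as the set inclusion $K_k^{\epsilon}(F)\subseteq \overline{B_{\frac{r_k}{k}(M+\epsilon)}\left(\left(1-\frac{r_k}{k}\right)\conv\left(K_n^{\epsilon}(F)\right)\right)}$ and then invokes the representation (\ref{intersecteps}).
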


\begin{proof}
  Let $k,n\in \mathbb{N}$. For an $\epsilon$-pseudo-orbit $(\xi_j)_{j=0}^{kn}$
  with $\frac{1}{kn} \left( \xi_{kn} -\xi_0 \right) \in K_{kn}^\epsilon(F)$
  we~have
  \begin{align*}
        \frac{1}{kn} \left( \xi_{kn} -\xi_0 \right)
    \ = \    \frac{1}{k} \sum\limits_{i=0}^{k-1}  
        \frac{1}{n} \left( \xi_{(i+1)n} -\xi_{in} \right)
    \ \in \  \conv \left( K_n^\epsilon(F) \right)\ ,
  \end{align*}
  since the tuples $( \xi_{in}, \xi_{in+1}, \ldots,\xi_{(i+1)n})$ are
  $\epsilon$-pseudo-orbits of $F$ of length $n+1$ and hence $\frac{1}{n}(
  \xi_{(i+1)n} -\xi_{in} ) \in K_n^{\epsilon}(F)$ for each $i \in
  \{0,\ldots,k-1\}$.  Consequently, for all $k,n \in \mathbb{N}$ we have
  \begin{align}
    \label{Econv_eps}
    K_{kn}^{\epsilon}(F) \ \subseteq \ \conv(K_n^{\epsilon}(F))\ .
  \end{align}
  Now let $k,n\in\mathbb{N}$ with $k\geq n$. Each number $k$ can uniquely be
  split into $k = m_k n + r_k$, where $m_k \in \mathbb{N}$ is the integer part
  of $\frac{k}{n}$ and $r_k \in \{0,\ldots,n-1\}$. If $(\xi_j)_{j=0}^ k$ is
  an~$\epsilon$-pseudo-orbit of length $k+1$, we find
  \begin{align}
    \label{rest}
    K_k^\epsilon(F) 
    \ \ni \ \frac{1}{k} \left( \xi_k - \xi_0 \right)
    \ = \   \frac{1}{k} \left( \xi_{m_k n+r_k} - \xi_{m_k n} \right)
       +\frac{1}{k} \left( \xi_{m_k n} - \xi_0 \right) \ .
  \end{align}
  Since the tuple $(\xi_{m_k n+j})_{j=0}^{r_k}$ is an $\epsilon$-pseudo-orbit of $F$
  of length $r_k+1$, we obtain~that
  \begin{eqnarray*}
  \lefteqn{\left\| \xi_{m_k n+r_k} - \xi_{m_k n} \right\|
   \ = \  \left\| \sum\limits_{i=0}^{r_k-1} 
    \xi_{m_k n+i+1} - \xi_{m_k n +i} \right\| } \\
  &\leq & \sum\limits_{i=0}^{r_k-1} \left( \left\| 
    F(\xi_{m_k n +i})-\xi_{m_k n +i} \right\| + \epsilon \right) 
    \ \leq\  r_k (M + \epsilon)  \ . 
 \end{eqnarray*}
 Thus, the norm of the first summand in (\ref{rest}) is bounded by
 $\frac{r_k}{k}(M+\epsilon)$. For the second summand, we have
  \begin{align*}
   \frac{1}{k}\left(\xi_{m_k}-\xi_0\right)
    \ = \ \frac{m_k n}{k} \cdot \frac{1}{m_k n} \left( \xi_{m_k n} - \xi_0 \right)
    \ \in \ \frac{m_k n}{k} \cdot K_{m_k n}^\epsilon(F) \ .
  \end{align*}    
  By (\ref{Econv_eps}), $K_{m_k n}^\epsilon(F)$ is included in $\conv \left( K_n^\epsilon(F) \right)$.
  Altogether, we obtain 
  \begin{align}
    K_k^\epsilon(F) &\ \subseteq \ \overline{B_{\frac{r_k}{k}(M +
        \epsilon)}\left(\frac{m_k n}{k} \cdot K_{m_k n}^\epsilon(F)\right)}
    \nonumber \\
    &\ \subseteq \ \overline{B_{\frac{r_k}{k}(M + \epsilon)}\left(\left( 1 -
          \frac{r_k}{k} \right)\conv\left( K_n^\epsilon(F) \right)\right)} \ .
    \label{diffproof1}
  \end{align}
  For a fixed value of $n$, the inclusion (\ref{diffproof1}) is valid for all $k
  \geq n$. Therefore, for each $n \in \mathbb{N}$, the representation
  (\ref{intersecteps}) of the $\epsilon$-rotation set yields
   \begin{align*}
        \epsrs{F}{\epsilon}
        & \ = \ \bigcap\limits_{l \geq 1}
        \overline{\bigcup\limits_{k \geq l} K_k^\epsilon(F)}
         \ \subseteq\ 
        \bigcap\limits_{l \geq n} \overline{\bigcup\limits_{k \geq l}
          K_k^\epsilon(F)} \\ 
          &\ \subseteq \ \bigcap\limits_{l \geq n}
        \overline{\bigcup\limits_{k \geq l} B_{\frac{r_k}{k}(M +
            \epsilon)}\left(\left(1 - \frac{r_k}{k} \right) \conv \left(
          K_n^\epsilon(F) \right)\right)} 
         \ = \ \conv \left( K_n^\epsilon(F) \right)\ ,
  \end{align*}  
  due to the convergence $\frac{r_k}{k} \to 0$ as $k \to \infty$. 
\end{proof}

\begin{lemma} \label{Prop_knconv}
  Let $F \in \cm{m}$ and $n \in \mathbb{N}$. Then
    $K_n^\epsilon(F)  \convH  K_n(F) \text{ as } \epsilon \to 0.$
\end{lemma}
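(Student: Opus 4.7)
The plan is to prove Hausdorff convergence by establishing the two one-sided inclusions. Since every genuine orbit is trivially an $\epsilon$-pseudo-orbit for any $\epsilon>0$, the inclusion $K_n(F)\subseteq K_n^\epsilon(F)$ is immediate. So the substantive content is to show that, for any $\delta>0$, one has $K_n^\epsilon(F)\subseteq B_\delta(K_n(F))$ whenever $\epsilon$ is small enough.

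To achieve this, I would fix $\epsilon_0>0$ (say $\epsilon_0=1$) once and for all, and show that every $\epsilon$-pseudo-orbit $(\xi_j)_{j=0}^n$ with $\xi_0\in[0,1]^m$ and $\epsilon\leq\epsilon_0$ stays inside a fixed compact set $K\subset\R^m$ depending only on $n$ and~$F$. Indeed, since $F$ commutes with integer translations, the displacement function $x\mapsto\|F(x)-x\|$ is $\Z^m$-periodic and bounded by $M$, so inductively $\|\xi_{j+1}-\xi_j\|\leq M+\epsilon_0$ and the true orbit $F^j(\xi_0)$ likewise satisfies $\|F^{j+1}(\xi_0)-F^j(\xi_0)\|\leq M$. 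Thus both $\xi_j$ and $F^j(\xi_0)$ lie in $K:=\overline{B_{n(M+\epsilon_0)}([0,1]^m)}$ for all $j\in\{0,\ldots,n\}$.

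On the compact set $K$, the map $F$ is uniformly continuous; let $\omega\colon[0,\infty)\to[0,\infty)$ be a modulus of continuity, i.e.\ a continuous nondecreasing function with $\omega(0)=0$ satisfying $\|F(x)-F(y)\|\leq\omega(\|x-y\|)$ for all $x,y\in K$. Setting $d_j:=\|\xi_j-F^j(\xi_0)\|$, the triangle inequality combined with the pseudo-orbit condition gives
\begin{align*}
d_{j+1}\ \leq\ \|\xi_{j+1}-F(\xi_j)\|+\|F(\xi_j)-F(F^j(\xi_0))\|\ \leq\ \epsilon+\omega(d_j).
\end{align*}
Since $d_0=0$, an induction on $j$ yields $d_j\leq\Phi_j(\epsilon)$, where $\Phi_0\equiv 0$ and $\Phi_{j+1}(\epsilon):=\epsilon+\omega(\Phi_j(\epsilon))$. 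Each $\Phi_j$ is continuous with $\Phi_j(0)=0$, so $\Phi_n(\epsilon)\to 0$ as $\epsilon\to 0$, \emph{uniformly} in the choice of pseudo-orbit.

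For any point $v=\frac{\xi_n-\xi_0}{n}\in K_n^\epsilon(F)$, the corresponding true orbit yields $w:=\frac{F^n(\xi_0)-\xi_0}{n}\in K_n(F)$ with $\|v-w\|\leq d_n/n\leq\Phi_n(\epsilon)/n$. This shows $K_n^\epsilon(F)\subseteq B_{\Phi_n(\epsilon)/n}(K_n(F))$, and together with the trivial reverse inclusion establishes $\dH(K_n^\epsilon(F),K_n(F))\leq\Phi_n(\epsilon)/n\to 0$. The only subtle point is the uniform-in-pseudo-orbit bound, which is handled by confining everything to the compact set $K$; once that is secured, the rest is a routine inductive estimate using the modulus of continuity.
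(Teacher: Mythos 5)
Your proof is correct and follows essentially the same route as the paper's: both hinge on the decomposition $\frac{\xi_n-\xi_0}{n}=\frac{F^n(\xi_0)-\xi_0}{n}+\frac{\xi_n-F^n(\xi_0)}{n}$ together with the observation that continuity of $F$ forces $\|\xi_n-F^n(\xi_0)\|$ to be small, uniformly over all $\epsilon$-pseudo-orbits starting in $[0,1]^m$, once $\epsilon$ is small. The only difference is that you carefully justify this uniformity (confinement to a fixed compact set via periodicity, then a modulus-of-continuity induction), whereas the paper simply attributes it to ``the continuity of $F$''.
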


\begin{proof}
  The inclusion $K_n(F) \subseteq K_n^{\epsilon}(F)$ is evident for every
  $\epsilon > 0$. Let $\delta > 0$. Due to the continuity of $F$, there exists an
  $\epsilon_0 > 0$ such that $\|F^n(\xi_0)-\xi_n\| < n \delta$ for every
  $\epsilon$-pseudo-orbit $(\xi_j)_{j=0}^n$ of $F$ with $0<\epsilon \leq
  \epsilon_0$. Therefore, we obtain
  \begin{displaymath}
        \frac{\xi_n - \xi_0}{n}
    \ = \   \frac{F^n(\xi_0)-\xi_0}{n} + \frac{\xi_n-F^n(\xi_0)}{n}
    \ \in\  B_\delta(K_n(F)) \ . 
  \end{displaymath}
  As $\delta > 0$ was chosen arbitrarily and $n\in\N$ is fixed, this proves the
  convergence $K_n^{\epsilon}(F) \convH K_n(F)$ as $\epsilon \to 0$.
\end{proof}

\medskip\noindent {\em Proof of Theorem \ref{main}.}\quad Recall that in order
to prove Theorem \ref{main}, we have to show the inclusion (\ref{main2}). Let
$\delta > 0$. By Lemma \ref{main_Kn_convH}, the rotation set $\rs{F}$ can be
approximated up to an arbitrary precision with respect to the Hausdorff distance
by the sets $K_n(F)$. Therefore, we choose $n_0 \in \mathbb{N}$ sufficiently
large, so that
\begin{align}
  \label{proof1}
  K_{n_0}(F) \ \subseteq \ \overline{B_{\frac{\delta}{2}}(\rs{F})} \ .
\end{align}
Furthermore, by Lemma \ref{Prop_knconv} there exists an $\epsilon_0 > 0$
sufficiently small, such that
\begin{align}
  \label{proof2}
  K_{n_0}^{\epsilon}(F) \ \subseteq \ \overline{B_{\frac{\delta}{2}}(K_{n_0}(F))}
\end{align}
for all $0 < \epsilon \leq \epsilon_0$. Applying Lemma \ref{difficultest},
together with (\ref{proof1}) and (\ref{proof2}) we obtain
\begin{align*}
    \epsrs{F}{\epsilon} & 
  \ \subseteq \ 
    \conv \left( K_{n_0}^{\epsilon}(F) \right)
  \ \subseteq \ 
    \conv \left( B_{\frac{\delta}{2}}(K_{n_0}(F) \right) \\ & \ 
  \subseteq \ 
    \conv \left(B_\delta(\rs{F}) \right)
  \ = \ \overline{B_\delta(\rs{F})}
\end{align*}
for every $0 < \epsilon \leq \epsilon_0$. 
$\overline{B_\delta(\rs{F})}$ is convex itself, since the rotation set is convex
by Theorem~\ref{Thm_convexity}. As $\delta > 0$ was arbitrary, this shows the
asserted convergence $\epsrs{F}{\epsilon} \convH \rs{F}$ as $\epsilon \to
0$. \qed

\section{Set-oriented computation of rotation sets} \label{sec_algo}

\subsection{Why set-oriented numerics -- shortcomings of the direct approach.}

Before we turn to our algorithm for the set-oriented computation of rotation
sets, we briefly want to comment on the problems that arise with a more
conventional approach. 

The easiest way to compute the rotation set of a torus
homeomorphism numerically would be to fix a standard grid $\Gamma\ssq[0,1]^2$ of
$N\times N$ points, to compute the normalised displacement vectors
$\frac{1}{n}(F^n(x)-x)$ for all $x\in\Gamma$ and some large $n\in\N$ and to plot the
collection of these vectors in order to obtain an approximation of the rotation
set. However, one may now consider the following situation, which actually turns
out to be generic (with respect to $\cC^0$-topology, see
\cite{Guiheneuf2015RotationSetComputation}): Suppose that an area-preserving
torus homeomorphism $f$ has a rotation set with non-empty interior, and that the
Lebesgue measure $\Leb_{\T^2}$ on $\torus$ is ergodic with respect to $f$. Then
there exists a well-defined rotation vector
\[
\varrho_f(\Leb_{\T^2}) \ = \ \int_{\T^2} \varphi(x) \ dx   ,
\]
where $\varphi(x) = F(x)-x$ is interpreted as a function on the torus, and we
have
\[
\nLim \frac{F^n(x)-x}{n} \ = \ \ntel \inergsum (\varphi\circ f^i)(x) \ = \ \varrho_f(\Leb_{\T^2})
\]
for Lebesgue-a.e.\ $x\in\torus$. Hence, if $n$ above is chosen too large, the
rotation vectors coming from starting points in the grid $\Gamma$ will almost
surely be arbitrarily close to $\varrho_f(\Leb_{\T^2})$. In this case, the
numerical approximation of the rotation set will show the singleton
$\varrho_f(\Leb_{\T^2})$, or something very close to it, whereas the true
rotation set is much bigger due to its non-empty interior. Figure
\ref{Fig_dir_mthd_a1b1} illustrates this numerical phenomenon for a standard
example $f_{1,1}$ (see Section \ref{sec_nums}) of a torus diffeomorphism given
by the lift
\begin{equation*} 
  F_{1,1}:\mathbb{R}^2 \to\mathbb{R}^2 \ , \quad (x,y) \mapsto \big{(}x+\sin(2\pi(y+\sin(2\pi x))),\ y+\sin(2\pi x)\big{)},
\end{equation*}
which exhibits $\varrho_{f_{1,1}}(\Leb_{\T^2}) = (0,0)$ and is known to have the
rotation set $\rs{F_{1,1}} = [-1,1]^2$ (see Lemma \ref{l.rotset_F11}); as the
number of iterations increases the majority of the approximate rotation vectors
based on the iteration of grid points tend towards the origin and only the fixed
points at the vertices are detected.

\begin{figure}[ht]
  \labelsize
  \setlength\figureheight{6.3cm} 
  \setlength\figurewidth{6.3cm}
  \setlength{\ticklength}{1mm}
  \centering
  \begin{minipage}[t]{0.4\textwidth}
    \centering \vspace{0pt} 
    \begin{tikzpicture} 
      \begin{scope}[scale=1]
        \begin{axis}[
          width=\figurewidth, height=\figureheight,
          enlargelimits=false,
          axis on top,
          xtick={-1,0,1}, ytick={-1,0,1}, 
          major tick length = \ticklength,
          x axis line style = {-}, y axis line style = {-}
          ]
          \addplot graphics [xmin=-1.1,xmax=1.1,ymin=-1.1,ymax=1.1]
                   {./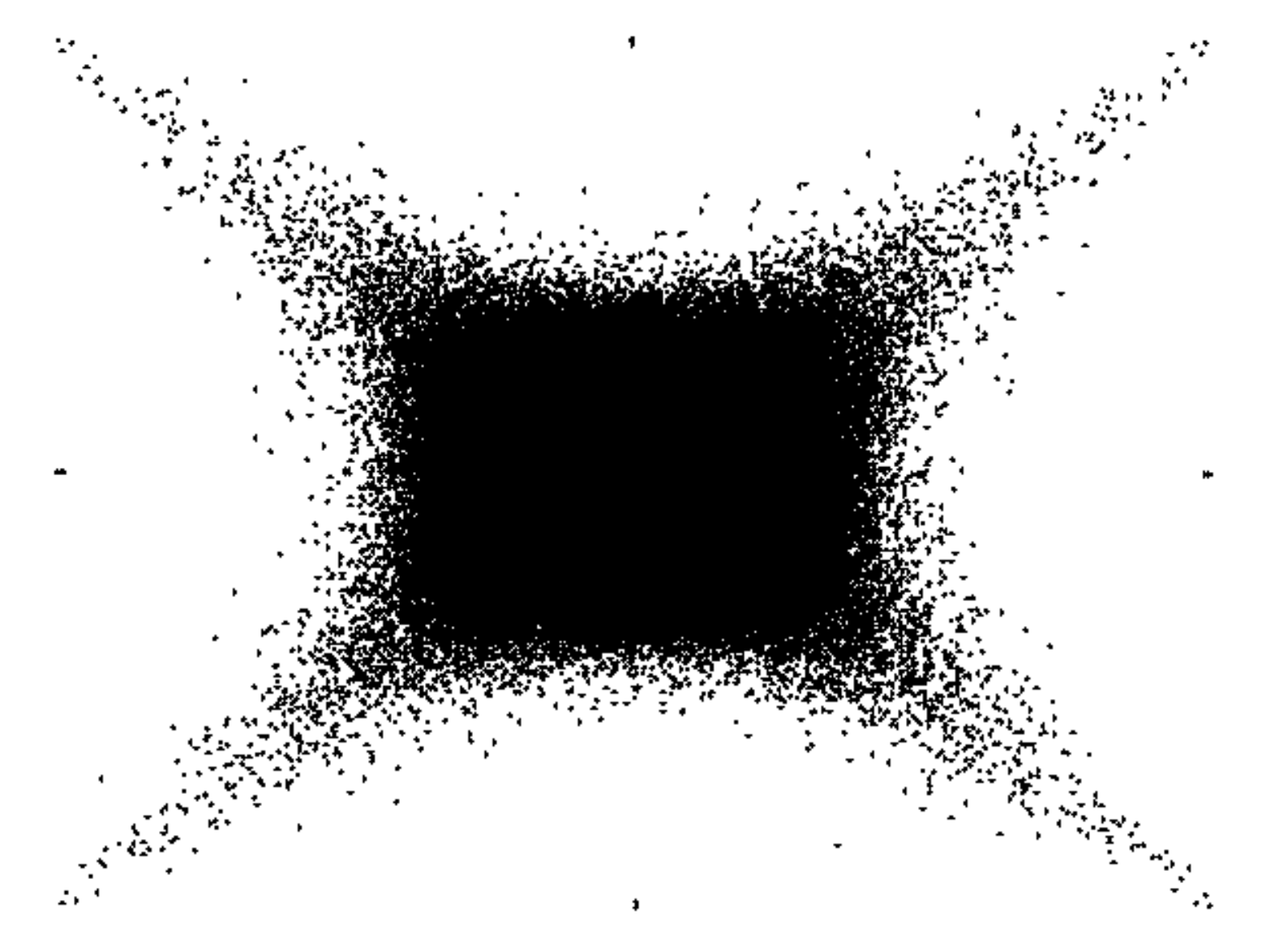};
          \draw[dashed, color=lightgray] 
            (axis cs:-1,-1) -- (axis cs:-1,1) -- 
            (axis cs: 1,1) -- (axis cs: 1,-1)-- 
            (axis cs:-1,-1);
        \end{axis}
      \end{scope}    
    \end{tikzpicture}
  \end{minipage}
  \hspace{1.5cm}
  \begin{minipage}[t]{0.4\textwidth}
    \centering \vspace{0pt} 
    \begin{tikzpicture} 
      \begin{scope}[scale=1]
        \begin{axis}[
          width=\figurewidth, height=\figureheight,
          enlargelimits=false,
          axis on top,
          xtick={-1,0,1}, ytick={-1,0,1}, 
          major tick length = \ticklength,
          x axis line style = {-}, y axis line style = {-},
          ]
          \addplot graphics [xmin=-1.1,xmax=1.1,ymin=-1.1,ymax=1.1]
                   {./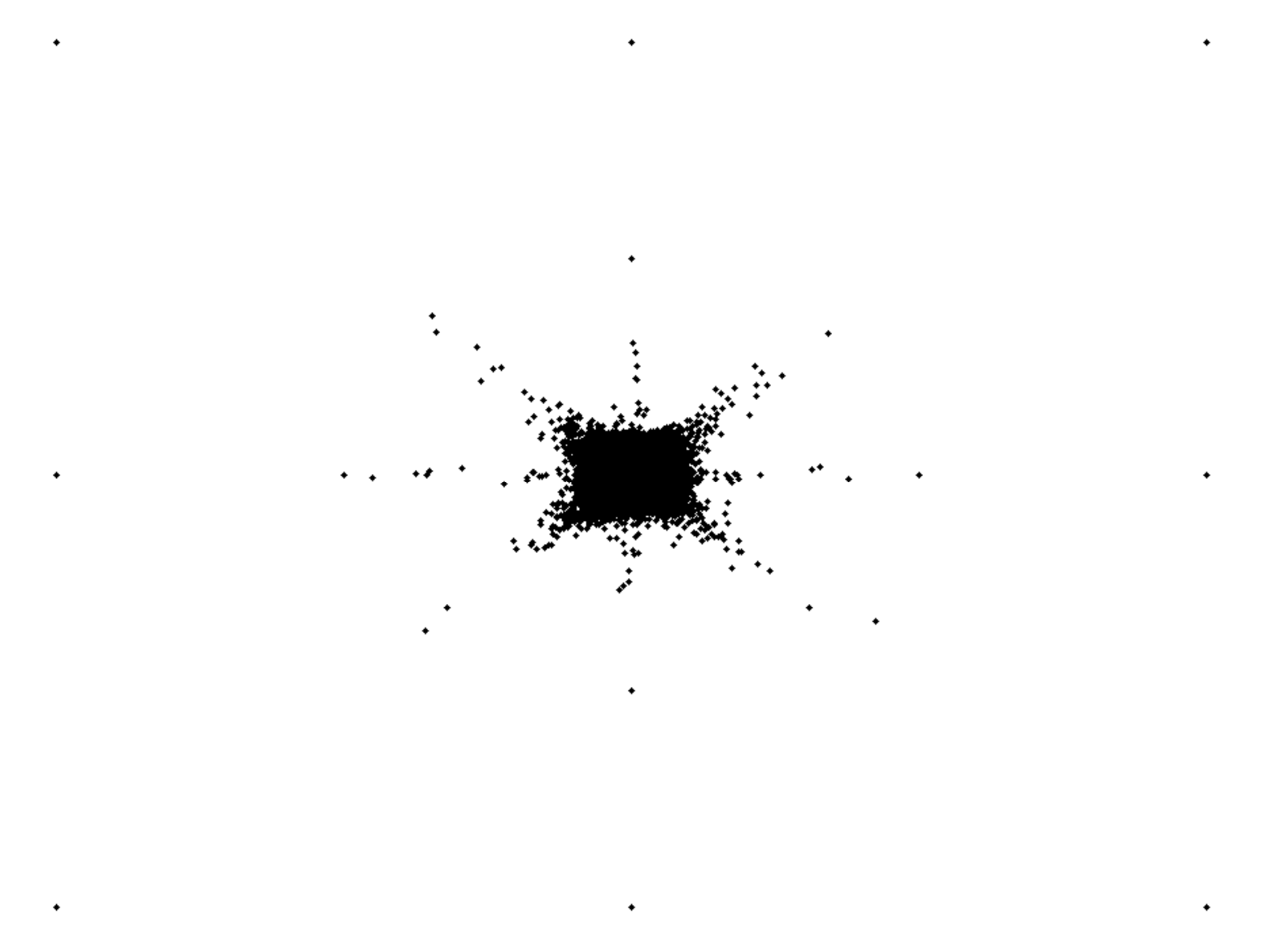};
          \draw[dashed, color=lightgray] 
            (axis cs:-1,-1) -- (axis cs:-1,1) -- 
            (axis cs: 1,1) -- (axis cs: 1,-1)-- 
            (axis cs:-1,-1);
        \end{axis}
      \end{scope}    
    \end{tikzpicture}
  \end{minipage}
  \caption{Approximation of $\rs{F_{1,1}}$ by a direct approach, $80$ (left) and $2500$ (right) iterations, grid range $0.001$ each} 
  \captionsetup{{width=\textwidth}}
  \label{Fig_dir_mthd_a1b1}
\end{figure}

\noindent
It is effects like this which
prevent a direct approach from producing reasonable results
\cite{Guiheneuf2015RotationSetComputation}, and frequently lead to an
underestimation of the actual rotation set. In order to counter this, one would
have to choose the grid constant $N$ exponentially large with respect to $n$ (of
magnitude $N\sim L^n$, where $L$ is a Lipschitz constant of $f$), which is not
feasible for efficient computations.

Moreover, the above discussion shows that in the direct approach there is a
critical dependency between the parameters $N$ (grid size) and $n$ (number of
iterates), which cannot be chosen independent of each other. In contrast to
this, it is surely desirable to have an algorithm whose precision increases in
both parameters independently, so that computation capacities can be pushed to
their limit without having to worry about the precise relation of the
parameters. This is exactly what the set-oriented method will allow us to do.

\subsection{The set-oriented approach -- description of the algorithm.}
 \label{algorithm}
We start with a basic observation. By Lemma~\ref{main_Kn_convH}, the sets
$K_n(F)$ converge to the rotation set $\rs{F}$, and moreover we have the
elementary estimate
\begin{align}
  \dH\left(\frac{1}{n}F^n([0,1]^2),K_n(F) \right)\ \leq \ \frac{\sqrt{2}}{n}
  \label{E_Fn_convH2} \ .
\end{align}
Thus, we obtain
\begin{corollary} \label{Col_Fn_convH}
  Let $F \in \hm{2}$. Then
  $\frac{1}{n}F^n([0,1]^2) \convH \rs{F}$ as $n \to \infty$.
\end{corollary}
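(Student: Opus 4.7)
The plan is a direct triangle-inequality argument that assembles the two ingredients displayed immediately before the corollary. Since $\dH$ is a metric on the collection of non-empty compact subsets of $\R^2$, for every $n \in \N$ one has
\begin{equation*}
  \dH\!\left(\tfrac{1}{n}F^n([0,1]^2),\rs{F}\right)
  \;\leq\; \dH\!\left(\tfrac{1}{n}F^n([0,1]^2),K_n(F)\right) + \dH\!\left(K_n(F),\rs{F}\right).
\end{equation*}
The first summand on the right is bounded by $\sqrt{2}/n$ thanks to the elementary estimate (\ref{E_Fn_convH2}) and therefore tends to $0$ as $n\to\infty$. The second summand tends to $0$ by Lemma~\ref{main_Kn_convH}, which is the Misiurewicz--Ziemian result that the sets $K_n(F)$ converge to $\rs{F}$ in the Hausdorff topology. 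Combining these two observations yields the asserted convergence $\tfrac{1}{n}F^n([0,1]^2)\convH\rs{F}$.

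There is essentially no obstacle here: the corollary merely repackages Lemma~\ref{main_Kn_convH} into the form that is convenient for the forthcoming set-oriented algorithm, which iterates box covers of the unit square rather than tracking individual displacement vectors. If one wished to also verify (\ref{E_Fn_convH2}) from scratch, it suffices to note that $K_n(F)$ is obtained from $\tfrac{1}{n}F^n([0,1]^2)$ by subtracting the point $x/n\in\tfrac{1}{n}[0,1]^2$ at each $x$, and these offsets all have norm at most $\sqrt{2}/n$, so each of the two sets lies in the closed $\sqrt{2}/n$-neighbourhood of the other.
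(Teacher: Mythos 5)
Your argument is correct and is precisely the one the paper intends: the corollary follows by combining the elementary estimate (\ref{E_Fn_convH2}) with Lemma~\ref{main_Kn_convH} via the triangle inequality for $\dH$, and your justification of (\ref{E_Fn_convH2}) by the pointwise offset $x/n$ of norm at most $\sqrt{2}/n$ is the standard one. Nothing further is needed.
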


\noindent
This allows to focus on the sets $\frac{1}{n}F^n([0,1]^2)$ in order to compute
the rotation sets, which is quite convenient from a practical viewpoint. Hence,
we aim for an accurate numerical approximation of the sets
$\frac{1}{n}F^n([0,1]^2)$ for some large $n \in \mathbb{N}$. Inspired by the
software package GAIO (Global Analysis of Invariant Objects), we apply the
concept of \textit{box coverings} to formulate our algorithm. This library,
developed by Dellnitz, Froyland and Junge
\cite{DellnitzFroylandJunge2001GAIO}, provides numerical methods for
the analysis of dynamical systems by set-oriented techniques. As mentioned
above, on the theoretical level this corresponds to considering
$\eps$-pseudo-orbits.

\bigskip
\noindent
For a given maximal diameter $\epsilon > 0$ let $\mathcal{B}_0$ be a collection
of compact sets $B \subset [0,1]^2$ with
\begin{align}
  \sup\limits_{B \in \mathcal{B}_0} 
  \diam(B)\ \leq \ \epsilon
  \quad \text{ and } \quad
  \bigcup\limits _{B \in \mathcal{B}_0} B \ = \ [0,1]^2 \ .
  \label{E_def_covI2}
\end{align}
Note that $\cB_0$ can be considered as a covering of $\T^2$, whose lift to
$\R^2$ is then given~by
\begin{align*}
  \mathcal{B} \ = \ \left\{ B + t \; \big| \; B \in \mathcal{B}_0,\ t \in
    \mathbb{Z}^2 \right\} \ .
\end{align*}
The elements of the collections $\mathcal{B}_0$ and $\cB$ will be referred to as
\textit{boxes}. Further, given $\eta>0$, for each box $B\in\cB$, let
$\Gamma_B\ssq B$ be a finite set of points which is $\eta$-dense in $B$ and
chosen such that $\Gamma_{B+t}=\Gamma_B+t$ for every integer vector
$t\in\Z^2$. Finally, fix $R>0$. Then, for $F \in \hm{2}$, we now generate a
sequence of sets $\left( Q_n^\ast\right)_{n\in\mathbb{N}}$ (approximations of
the rotation set) according to the following algorithm.

\bigskip
\begin{minipage}[t]{0.18\textwidth} \vspace{0pt} \raggedright {\em Initialisation:} \end{minipage}
\begin{minipage}[t]{0.15\textwidth}  \vspace{0pt}  \raggedleft  $Q_0$ \end{minipage}
\begin{minipage}[t]{0.5\textwidth} \vspace{0pt} \raggedright
    $\ =\ [0,1]^2  $
\end{minipage} \hfill
\begin{minipage}[t]{0.1\textwidth} \, \end{minipage}

\medskip
\begin{minipage}{0.18\textwidth} \raggedright {\em Box images:} \end{minipage}
\begin{minipage}{0.7\textwidth} \raggedright
 Given $B\in\cB$,  let the {\em box image}   of $B$ be defined as
\end{minipage}

\begin{minipage}[t]{0.18\textwidth} \vspace{0pt} \quad \end{minipage}
\begin{minipage}[t]{0.15\textwidth}  \vspace{0pt}  \raggedleft  $\cI(B)$ \end{minipage}
\begin{minipage}[t]{0.5\textwidth} \vspace{0pt} \raggedright
    $\ =\ \{B'\in\cB \mid \exists x\in\Gamma_B: d(F(x),B')\leq R\} $
\end{minipage} \hfill
\begin{minipage}[t]{0.1\textwidth} \vspace{0pt} \raggedleft  \vspace{-10pt}
  \begin{align} \, \label{e.box_images} \end{align}
\end{minipage}

\medskip
\begin{minipage}[t]{0.18\textwidth} \vspace{0pt} \quad \end{minipage}
\begin{minipage}{0.7\textwidth} \raggedright
   (see Fig. \ref{f.box_covering}).
\end{minipage}

\smallskip
\begin{minipage}{0.18\textwidth} \raggedright {\em Iteration:} \end{minipage}
\begin{minipage}{0.7\textwidth} \raggedright
  For $k = 0, \ldots, n-1$ generate the  {\em box coverings}
\end{minipage}

\begin{minipage}[t]{0.18\textwidth} \vspace{0pt} \quad \end{minipage}
\begin{minipage}[t]{0.15\textwidth}  \vspace{0pt}  \raggedleft  $\mathcal{B}_{k+1}$ \end{minipage}
\begin{minipage}[t]{0.45\textwidth} \vspace{0pt} \raggedright
    $\ = \ \left\{ B' \in \cB\mid \exists B \in \mathcal{B}_k: B' \in \cI(B) \right\}, $
\end{minipage} \hfill
\begin{minipage}[t]{0.1\textwidth} \vspace{0pt} \raggedleft  \vspace{-10.5pt}
  \begin{align} \, \label{E_box_coll_Bn} \end{align}
\end{minipage}

\smallskip
\begin{minipage}[t]{0.18\textwidth} \vspace{0pt} \quad \end{minipage}
\begin{minipage}[t]{0.15\textwidth}  \vspace{0pt}  \raggedleft  $ Q_{k+1}$ \end{minipage}
\begin{minipage}[t]{0.45\textwidth} \vspace{0pt} \raggedright
    $\ = \ \bigcup\nolimits_{B \in \mathcal{B}_{k+1}} \!\!B. $
\end{minipage} \hfill
\begin{minipage}[t]{0.1\textwidth} \vspace{0pt} \raggedleft \vspace{-10.5pt}
  \begin{align} \, \label{E_def_Qn} \end{align}
\end{minipage}

\medskip
\begin{minipage}{0.18\textwidth}   \raggedright {\em Normalisation:} \end{minipage}
\begin{minipage}{0.15\textwidth}   \raggedleft  $Q_n^\ast$ \end{minipage}
\begin{minipage}{0.45\textwidth} \raggedright
    $\ = \ \frac{1}{n}Q_{n} ,\  n \in \mathbb{N}$
\end{minipage} \hfill
\begin{minipage}{0.1\textwidth} \raggedleft \vspace{-10.5pt}
  \begin{align} \, \label{e.Qnstar} \end{align}
\end{minipage}

\bigskip
\begin{figure}[!ht]
\raggedright
\begin{minipage}[m]{0.51\textwidth}
   \raggedleft
   \begin{tikzpicture}[scale=0.7]
      \draw (0,0)--(0,3)--(3,3)--(3,0) -- cycle; 
       
      \foreach \i in {0,0.75,...,3} {
          \foreach \j in {0,0.75,...,3} {
         \draw[fill=lightgray,color=lightgray]  (\i,\j) circle (.4ex) ;}}
         
      \draw[fill=black,color=black] (2.25,0.75) circle (.4ex) ;
      
       \node at (-1,2.6) {$B \in \mathcal{B}_k$};      
       \node at (4.4,0.75) {$x \in \Gamma(B)$};    
       \node at (5.5,3.1) {$F$} ;
        
      \draw [decorate,decoration={brace,amplitude=2pt},xshift=0pt,yshift=0pt]
 (1.125,1.875) -- (1.5,2.25) node [black,above=1.5,left=1.5] {$\eta$};          
 
      \path [->,bend angle=25, bend left, line width = 1pt]  (4,2.4) edge (7,2.4); 
   \end{tikzpicture}
\end{minipage}
\begin{minipage}[m]{0.31\textwidth}
   \raggedright
   \begin{tikzpicture}[scale=1]
      \fill[fill=mygray, color=mygray] 
            (0,0) -- (0,4) -- (3,4) -- (3,3) -- (4,3) -- (4,1) -- (3,1) -- (3,0) -- cycle;  
      
      \foreach \i in {0,...,4} {
         \draw[dotted] (-0.3,\i) -- (4.3,\i);
         \draw[dotted] (\i,-0.3) -- (\i,4.3);}
         
      \draw[fill=black]  (1.73,2.05) circle (.4ex) node[below = 2]{$F(x)$};
      
      \draw[radius = 1.41,style = dashed ] (1.73,2.05) circle ;
      
      \draw  (1.73,2.05) -- (2.73,3.05) node[below = 10, left = 11]{$R$};
      \draw (3,0) -- (4,1) node[below = 10, left = 11]{$\epsilon$};
      
       \node at (-0.5,3.6) {$\cI(B)$};
   \end{tikzpicture}
\end{minipage}
\caption{Box image of a box $B$ in the box covering $\mathcal{B}_k$, one test point, exemplarily.}
\label{f.box_covering}
\end{figure}
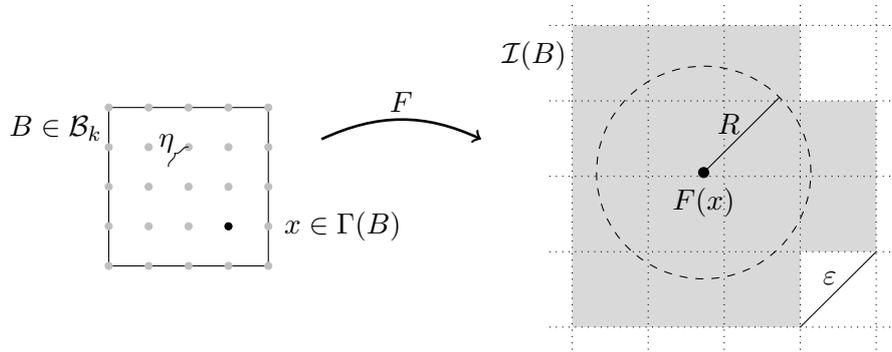


\clearpage
\noindent
The starting point of our algorithm is the unit square $[0,1]^2$. Instead
of iterating single test points, we consider collections of boxes $B \in
\mathcal{B}$ and always collect those which are hit by an image of one of the
previous boxes. 

By choosing the parameter $R$ for the numerical calculation of
the box images sufficiently large, we can ensure that no boxes are missed out
and the images are always overestimated (see
Lemma~\ref{l.box_overestimation}). This leads to a systematic overestimation of
the sets $\frac{1}{n}F^n\left([0,1]^2\right)$, which will eventually lead to the
fact that the algorithm always yields a superset of the actual rotation set (up
to a small shift of order $\nicefrac{2\sqrt{2}}{n}$, see Lemma~\ref{L_dH_Kn_Kneps}).  Hence,
the effect of underestimation described for the direct approach in the previous
subsection can be excluded (see Theorem~\ref{Th_box_err_asympt} below). 

 Note
also that for the implementation of the algorithm, the box images only have to
be computed once of each box $B\in\cB_0$ at the beginning. This immediately
provides the box images for the respective integer translates as well, and the
information can then be used throughout the whole iterative procedure.

\subsection{Convergence of the approximations -- qualitative results and error
  bounds.}

Relations between the normalised approximations $Q_n^\ast$ and the rotation set
$\rs{F}$ are established by the following two
results. Theorem~\ref{Th_box_err_asympt} is qualitative in nature and ensures
convergence, whereas Theorem~\ref{Thm_main_alg_err} uses an additional
boundedness assumption to provide error estimates.

\begin{theorem} \label{Th_box_err_asympt} Suppose that $F \in \hm{2}$ is
  Lipschitz continuous with Lipschitz constant $L> 1$ and for each $\eps>0$
  the constants $\eta,R>0$ are chosen such that $L\eta\leq R\leq \eps$.  Then
  \begin{equation}
    \label{eq:Qn-convergence}
    \lim_{\substack{n\to\infty\\\eps\to 0}} \ \dH(Q_n^\ast,\rs{F}) \ = \ 0 \ ,
  \end{equation}
  in the sense that for all $\delta>0$ there exist $\eps_0>0$ and $n_0\in\N$
  such that if $\eps$ in (\ref{E_def_covI2}) satisfies $\eps \in(0,\eps_0]$ and
  $n\geq n_0$, then $\dH(Q_n^\ast,\rs{F}) <  \delta$.
\end{theorem}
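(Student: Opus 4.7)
The plan is to bound $\dH(Q_n^\ast,\rs{F})$ by establishing two inclusions: $\rs{F}\ssq\overline{B_\delta(Q_n^\ast)}$ for $n$ sufficiently large (the ``no underestimation'' direction) and $Q_n^\ast\ssq\overline{B_\delta(\rs{F})}$ for $n$ large and $\eps$ small (the ``no overestimation'' direction), with $\delta>0$ arbitrary.

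The first inclusion rests on the pointwise overestimation $F^n([0,1]^2)\ssq Q_n$, which I would prove by induction on $n$. Given $y\in F^{k+1}([0,1]^2)$, write $y=F(z)$ with $z\in B\in\cB_k$, and choose a test point $x\in\Gamma_B$ with $d(z,x)\leq\eta$. The Lipschitz bound yields $d(y,F(x))\leq L\eta\leq R$; hence any box $B'\in\cB$ containing $y$ is in $\cI(B)\ssq\cB_{k+1}$, so $y\in Q_{k+1}$. Normalising gives $\tfrac{1}{n}F^n([0,1]^2)\ssq Q_n^\ast$, and the first inclusion then follows from Corollary~\ref{Col_Fn_convH}.

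For the second inclusion I would unwind the iterative construction of $\cB_n$ to build a pseudo-orbit shadowing each point of $Q_n$. Given $y\in Q_n$, the definition of $\cB_n$ provides a chain $B_0\in\cB_0,\,B_1,\ldots,\,B_n\ni y$ with $B_{k+1}\in\cI(B_k)$, and for each $k$ a test point $x_k\in\Gamma_{B_k}$ with $d(F(x_k),B_{k+1})\leq R$. Setting $\xi_0=x_0\in[0,1]^2$ and inductively choosing $\xi_{k+1}\in B_{k+1}$ realising $d(F(x_k),\xi_{k+1})\leq R$, I combine $d(\xi_k,x_k)\leq\diam(B_k)\leq\eps$ with the Lipschitz estimate to obtain $d(F(\xi_k),\xi_{k+1})\leq L\eps+R\leq(L+1)\eps$. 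Hence $(\xi_j)_{j=0}^{n}$ is an $(L+1)\eps$-pseudo-orbit starting in $[0,1]^2$, and since $d(\xi_n,y)\leq\eps$ and $\|\xi_0\|\leq\sqrt{2}$, I arrive at
\begin{equation*}
  Q_n^\ast \ \ssq \ \overline{B_{(\eps+\sqrt{2})/n}\bigl(K_n^{(L+1)\eps}(F)\bigr)}.
\end{equation*}

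The final step mimics the proof of Theorem~\ref{main}: fix $\delta>0$, use Lemma~\ref{main_Kn_convH} to pick $n_0$ with $K_{n_0}(F)\ssq\overline{B_{\delta/4}(\rs{F})}$, and then Lemma~\ref{Prop_knconv} to pick $\eps_0>0$ so small that $K_{n_0}^{(L+1)\eps}(F)\ssq\overline{B_{\delta/2}(K_{n_0}(F))}\ssq\overline{B_{3\delta/4}(\rs{F})}$ for all $\eps\in(0,\eps_0]$. Since $\rs{F}$ is convex by Theorem~\ref{Thm_convexity}, passing to convex hulls preserves the inclusion, and the key estimate (\ref{diffproof1}) from the proof of Lemma~\ref{difficultest} (applied with $\eps'=(L+1)\eps$) propagates the bound from $K_{n_0}^{\eps'}(F)$ to $K_n^{\eps'}(F)$ for $n$ large, with remainder error $O(n_0/n)$. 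Together with the displayed inclusion above, this closes the argument. The main technical obstacle is precisely the chain of approximations relating boxes, test points and pseudo-orbits; this is tamed by the constraint $L\eta\leq R\leq\eps$, which absorbs every discretisation error into a single constant multiple of $\eps$.
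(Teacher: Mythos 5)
Your proposal is correct and follows essentially the same route as the paper: an inductive box-overestimation argument giving $F^n([0,1]^2)\ssq Q_n$, an unwinding of the box chains into $C\eps$-pseudo-orbits placing $Q_n^\ast$ in an $O(\nicefrac{1}{n})$-neighbourhood of $K_n^{C\eps}(F)$ (you get $C=L+1$ where the paper gets $C=2$ by using the test points themselves as the pseudo-orbit), and the $\eps$-rotation-set machinery to close the argument. The only cosmetic difference is that you re-run the estimate (\ref{diffproof1}) directly where the paper invokes Theorem~\ref{main} together with the definition of $\epsrs{F}{2\eps_0}$.
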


\begin{rem}
  For theoretical purposes, one may also want to ignore the fact that the images
  of the boxes $B\in\cB$ can only be approximated via test points, and define
  alternative sequences $\big{(} \tilde Q_n \big{)}_{n \in \mathbb{N}},\big{(} \tilde Q_n^* \big{)}_{n \in \mathbb{N}}$ by using the precise images via
  \begin{align*}
    \tilde Q_0 &\ = \ Q_0 \ ,\\
   \tilde Q_n &\ =\ \bigcup \left\{ B\in \cB \mid  B\cap F(\tilde Q_{n-1})\neq \emptyset \right\} \ ,\\
    \tilde Q_n^* &\ =\ \frac{1}{n} \tilde Q_n \ .
  \end{align*}
  This corresponds to setting the parameters $\eta$ and $R$ to zero. Then the
  above convergence result is still valid for the new sequence, that is,
  $\lim_{\substack{n\to\infty\\\eps\to 0}} \ \dH(\tilde
  Q_n^\ast,\rs{F})=0$. Moreover, the assumption of Lipschitz continuity is not
  needed in this case, but it would still be required in the respective analogue
  of the error estimates given in Theorem~\ref{Thm_main_alg_err} below.
\end{rem}

\noindent
For the proof of Theorem~\ref{Th_box_err_asympt}, we will need the following
lemma.

\begin{lemma}
  \label{l.box_overestimation} Suppose that $F \in \hm{2}$ is
  Lipschitz continuous with Lipschitz constant $L>1$ and for each $\eps>0$ the
  constants $\eta,R>0$ are chosen such that $L\eta\leq R\leq \eps$. Then
    \begin{align*}
  \frac{F^n\left([0,1]^2\right)}{n}  \ \ssq \ Q^*_n 
  \ \ssq \  \left\{\frac{\xi_n}{n}\; \bigg| \;
  (\xi_j)_{j=0}^n \textrm{ is a } 2\eps\textrm{-pseudo-orbit of } F \textrm{
    with } \xi_0\in[0,1]^2\right\} \ .
  \end{align*}
\end{lemma}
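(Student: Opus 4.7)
The plan is to prove each inclusion by induction on $n$. Conceptually, the left inclusion expresses that the set-oriented iteration never loses track of the true image (it always \emph{overestimates}), while the right inclusion expresses that every chain of box-images can be recovered as an actual $2\eps$-pseudo-orbit. The two quantitative ingredients are: (i) the Lipschitz bound combined with $L\eta\leq R$, ensuring that the finitely many test points $\Gamma_B$ detect every box that the true image $F(B)$ can hit, and (ii) the diameter bound $\diam(B')\leq\eps$ together with $R\leq\eps$, which converts a box-image relation into a pseudo-orbit step of length at most $2\eps$.

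For the first inclusion I would prove $F^n([0,1]^2)\ssq Q_n$ by induction. The base case is $Q_0=[0,1]^2$. For the step, given $y=F(z)\in F^n([0,1]^2)$ with $z\in F^{n-1}([0,1]^2)\ssq Q_{n-1}$, pick $B\in\cB_{n-1}$ with $z\in B$ and a test point $x\in\Gamma_B$ satisfying $\|x-z\|\leq\eta$; the Lipschitz hypothesis then yields $\|F(x)-y\|\leq L\eta\leq R$. For any $B'\in\cB$ containing $y$ we therefore have $d(F(x),B')\leq R$, so $B'\in\cI(B)\ssq\cB_n$ by (\ref{e.box_images}) and (\ref{E_box_coll_Bn}), and hence $y\in B'\ssq Q_n$.

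For the second inclusion I would induct on the stronger statement: every point of $Q_n$ is the endpoint of a $2\eps$-pseudo-orbit $(\xi_j)_{j=0}^n$ with $\xi_0\in[0,1]^2$. The base case uses the trivial one-point pseudo-orbit. For the inductive step, given $\xi_n\in Q_n$ pick $B'\in\cB_n$ containing $\xi_n$, and by construction of $\cB_n$ choose $B\in\cB_{n-1}$ and $x\in\Gamma_B\ssq B$ realising $B'\in\cI(B)$, so that $d(F(x),B')\leq R$. Since $x\in B\ssq Q_{n-1}$, the induction hypothesis supplies a $2\eps$-pseudo-orbit $(\xi_j)_{j=0}^{n-1}$ ending at $\xi_{n-1}=x$. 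Appending $\xi_n$ gives a pseudo-orbit of length $n+1$, because compactness of $B'$ yields $z\in B'$ with $\|F(x)-z\|\leq R$, whence
\[
\|F(\xi_{n-1})-\xi_n\|\ \leq\ \|F(x)-z\|+\|z-\xi_n\|\ \leq\ R+\diam(B')\ \leq\ 2\eps \ .
\]

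The main difficulty is not conceptual but organisational: one must keep track of how the two sources of slack (the cushion $R$ in the box-image relation and the box diameter $\eps$) combine to give exactly $2\eps$, and notice that the Lipschitz hypothesis is needed only for the first inclusion, where the test points must faithfully represent the true dynamics on each box up to error $R$.
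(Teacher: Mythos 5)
Your proof is correct and follows essentially the same route as the paper's: the first inclusion via induction along the true orbit using $\eta$-dense test points and $L\eta\leq R$, and the second by converting a chain of box-image relations into a pseudo-orbit of test points with the estimate $R+\diam(B')\leq 2\eps$. The only differences are organisational (you phrase the second inclusion as a backward induction where the paper unrolls the whole box chain at once), and your remark on which hypothesis is needed where matches the paper's usage.
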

\begin{proof}
  First, suppose that $v=\frac{1}{n} F^n(x_0)\in \frac{1}{n}F^n\left([0,1]^2\right)$ and let
  $x_j=F^j(x_0)$. Choose boxes $B_j\in\cB$ such that $x_j\in B_j$. We claim that
  $B_j\ssq Q_j$ for $j=0\ld n$, so that finally $x_n=nv\in B_n\ssq Q_n$ and
  hence $v\in Q_n^*$.

  For $j=0$ the claim is obvious. Therefore, suppose that $B_j\ssq Q_j$. Then
  there exists a test point $y\in\Gamma_{B_j}$ $\eta$-close to $x_j$, so that
  $d(F(y),x_{j+1})\leq L\eta\leq R$. Since $x_{j+1}\in B_{j+1}$, this implies
  $B_{j+1}\in \cI(B)$ and thus $B_{j+1}\in Q_{j+1}$.

  This shows the first inclusion $\frac{1}{n} F^n\left([0,1]^2\right)\ssq Q^*_n$. In order
  to show the second inclusion, suppose that $v\in Q^*_n$. Then, by definition
  of $Q_n$, there exists a sequence of boxes $B_0\ld B_{n}$ such that $nv\in B_n$
  and $B_{j+1}\in\cI(B_j)$ for all $j=0\ld n-1$.  By definition of the box
  images, there exists a sequence of test points $\xi_j\in \Gamma_{B_j}$, $j=0\ld n-1$,
  such that $d(F(\xi_j),B_{j+1})\leq R$. Hence
  \[
  d(F(\xi_j),\xi_{j+1}) \ \leq \ R+\eps \ \leq 2\eps \ .
  \]
  for all $j=0\ld n-2$, and this remains true for $j=n-1$ if we let $\xi_n=nv\in
  B_n$. This means that $(\xi_j)_{j=0}^n$ is a $2\eps$-pseudo-orbit and thus
  yields the required second inclusion. 
\end{proof}

\begin{corollary} \label{c.box-overestimation}
  In the situation of Lemma~\ref{l.box_overestimation}, we have 
  \[
   K_n(F) \ \ssq \
  B_{\frac{\sqrt{2}}{n}}\left(Q_n^*\right) \eqand 
  Q_n^* \ \ssq \ B_{\frac{\sqrt{2}}{n}}\left(K_n^{2\eps}(F)\right) \ . 
  \]
\end{corollary}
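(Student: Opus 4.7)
The plan is to derive both inclusions directly from Lemma~\ref{l.box_overestimation} by exploiting the fact that the sets $K_n(F)$ and $K_n^{2\eps}(F)$ differ from the ``unshifted'' sets $F^n([0,1]^2)/n$ and $\{\xi_n/n \mid (\xi_j) \text{ is a } 2\eps\text{-pseudo-orbit with }\xi_0\in[0,1]^2\}$ only through subtraction of $\xi_0/n$, where $\xi_0\in[0,1]^2$. Since the diameter of $[0,1]^2$ is $\sqrt{2}$, this shift has norm at most $\sqrt{2}/n$. This is essentially the same observation that underlies the estimate $\dH(K_n(F), F^n([0,1]^2)/n) < \sqrt{2}/n$ quoted right before Corollary~\ref{c.MZ-estimate}.

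For the first inclusion, I would take an arbitrary $v\in K_n(F)$, write it as $v=(F^n(x)-x)/n$ for some $x\in[0,1]^2$, and note that by the first inclusion in Lemma~\ref{l.box_overestimation} the point $F^n(x)/n$ lies in $Q_n^*$. Since $\|v - F^n(x)/n\| = \|x\|/n \leq \sqrt{2}/n$, we conclude $v\in B_{\sqrt{2}/n}(Q_n^*)$.

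For the second inclusion, I would take $w\in Q_n^*$. By the second inclusion in Lemma~\ref{l.box_overestimation}, we have $w=\xi_n/n$ for some $2\eps$-pseudo-orbit $(\xi_j)_{j=0}^n$ of $F$ with $\xi_0\in[0,1]^2$. Then $(\xi_n-\xi_0)/n\in K_n^{2\eps}(F)$ by definition, and $\|w-(\xi_n-\xi_0)/n\|=\|\xi_0\|/n\leq\sqrt{2}/n$, so $w\in B_{\sqrt{2}/n}(K_n^{2\eps}(F))$.

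There is no real obstacle here; the corollary is a purely bookkeeping consequence of Lemma~\ref{l.box_overestimation} together with the elementary bound on $\|\xi_0\|/n$ for $\xi_0\in[0,1]^2$. The only small point to keep in mind is to use each of the two inclusions of Lemma~\ref{l.box_overestimation} in the correct direction, and to handle the shift by the starting point consistently on both sides.
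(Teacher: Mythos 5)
Your argument is correct and is precisely the bookkeeping the authors intend: the paper omits the proof of this corollary entirely, since it follows from Lemma~\ref{l.box_overestimation} exactly as you describe, by absorbing the shift $\xi_0/n$ with $\|\xi_0\|\leq\sqrt{2}$ for $\xi_0\in[0,1]^2$. Both inclusions are handled in the right direction, so there is nothing to add.
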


\begin{proof}[\textbf{\textit{Proof of Theorem~\ref{Th_box_err_asympt}}}]
  First, by Theorem~\ref{main} and the convexity of the rotation set, there
  exists $\eps_0>0$ such that
\[
\dH\left(\conv\left(\epsrs{F}{2\eps_0}\right),\rs{F}\right) \ < \ \frac{\delta}{3} \ .
\]
Further, by the definition of $\epsrs{F}{2\eps_0}$, there exists $n_0\in\N$ such
that for all $n\geq n_0$ we have $K_n^{2\eps_0}(F) \ssq
B_{\nicefrac{\delta}{3}}\left(\epsrs{F}{2\eps_0}\right)$ and hence
\[
\conv\left(K_n^{2\eps_0}(F)\right) \ \ssq
\ B_{\frac{\delta}{3}}\left(\conv\left(\epsrs{F}{2\eps_0}\right)\right) \ .
\]
As for all $n\in\N$ we have $\rs{F}\ssq
\conv\left(K_n(F)\right)\ssq\conv\left(K_n^{2\eps_0}(F)\right)$ (see
Lemma~\ref{LProper3}), we obtain that
\begin{equation}\label{e.qualitative_proof_1}
\dH\left(\conv\left(K^{2\eps_0}_n(F)\right),\rs{F}\right) \ < \frac{2\delta}{3} \ .
\end{equation}
At the same time, Lemma~\ref{main_Kn_convH} implies that we can choose $n_0$ such
that
\begin{equation}\label{e.qualitative_proof_2}
  \dH\left(K_n(F),\rs{F}\right) \ < \ \frac{2\delta}{3}
\end{equation}
for all $n\geq n_0$. If $n_0$ is chosen such that $\nicefrac{\sqrt{2}}{n_0} < \nicefrac{\delta}{3}$ and $n\geq n_0$, then
using (\ref{e.qualitative_proof_2}) together with the first inclusion in
Corollary~\ref{c.box-overestimation}, we obtain
\[
\rs{F} \ \ssq \ B_{\frac{2\delta}{3}}(K_n(F)) \ \ssq \ B_\delta(Q^*_n) \ .
\]
Conversely, (\ref{e.qualitative_proof_1}) together with the second inclusion in
Corollary~\ref{c.box-overestimation} yield
\[
Q^*_n \ \ssq \ B_{\frac{\delta}{3}}\! \left(K_n^{2\eps_0}(F)\right) \ \ssq \ B_\delta(\rs{F}) \ .
\]
Together with the fact that $K_n^{2\eps}(F)\ssq K_n^{2\eps_0}(F)$ for all
$\eps<\eps_0$, this means that
\[
\dH(Q^\ast_n,\rs{F}) \ < \ \delta 
\]
whenever $n\geq n_0$ and $\eps\in(0,\eps_0]$, as required.
\end{proof}

\noindent
In general, it is not possible to give quantitative error estimates for the
numerical computation of rotation sets. The reason is that there are no general
apriori bounds for the convergence of the sets $K_n(F)$ to $\rs{F}$. However,
it turns out that in many situations, and in particular whenever the rotation
set has non-empty interior, there exists a positive constant $c>0$ such that
\begin{align}
  \dH (K_n(F),\rs{F}) \leq \frac{c}{n} \quad \textrm{ for all } n\in\N\ .
  \label{E_err_plausible}
  \tag{BD}
\end{align}
This fact has been proven for diffeomorphisms in
\cite{AddasZanata2015BoundedMeanMotionDiffeos} and the result was later
generalised to homeomorphisms in \cite{LeCalvezTal2015ForcingTheory}. This is
also referred to as {\em bounded deviation property}. In our context,
(\ref{E_err_plausible}) together with the existence of a Lipschitz constant
allows to provide the following quantitative estimates.

\begin{theorem} \label{Thm_main_alg_err} Suppose $F \in \hm{2}$ is Lipschitz continuous with Lipschitz constant $L> 1$, let $\epsilon > 0$, $\eta L\leq R\leq \eps$ and $n \in \mathbb{N}$. Further, assume that (\ref{E_err_plausible}) holds for
  $c>0$. Then
\begin{align}
  \dH \left(Q_n^\ast,\rs{F}\right)\  \leq \ 
  \max \left\{ \frac{2\sqrt{2}}{n}, 
               \frac{\sqrt{2}}{n} + \gamma_{\epsilon,n} \right\}  \ , 
  \label{E_main_err}
\end{align}
where 
\begin{align}
  \gamma_{\epsilon,n} \ = \ 
  \frac{2r_n}{n}(M+\epsilon) + \left( 1- \frac{r_n}{n} \right)
  \min\limits_{1\leq k\leq n} \frac{1}{k} \left( c + 2\epsilon \frac{L^k-1}{L-1}
  \right) \ .
  \label{E_min_gamma} 
\end{align} 
Here $M =\max_{x\in [0,1]^2} \|F(x)-x\|$, $k_n$ is the number between $1$ and
$n$ for which the minimum on the right is attained and $r_n = n \bmod k_n$.
\end{theorem}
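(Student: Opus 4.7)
The plan is to control the Hausdorff distance $\dH(Q_n^*,\rs{F}) = \max\{\text{dist}(\rs{F},Q_n^*),\text{dist}(Q_n^*,\rs{F})\}$ by bounding each of the two one-sided distances separately, using the sandwich inclusions from Corollary~\ref{c.box-overestimation} together with Lemma~\ref{LProper3}, the MZ-estimate of Lemma~\ref{l.MZ-estimate}, the bounded deviation hypothesis (\ref{E_err_plausible}), and Lipschitz control of $\eps$-pseudo-orbits. The $2\sqrt{2}/n$ term will come from the ``lower'' direction $\rs{F} \subseteq B_\cdot(Q_n^*)$, while $\sqrt{2}/n+\gamma_{\eps,n}$ will arise from the ``upper'' direction $Q_n^* \subseteq B_\cdot(\rs{F})$.

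\textbf{Lower direction.} I start from $\rs{F}\ssq\conv(K_n(F))$ (Lemma~\ref{LProper3}) and the elementary estimate $\dH(K_n(F),\frac{1}{n}F^n([0,1]^2))<\sqrt{2}/n$, which gives $K_n(F)\ssq B_{\sqrt{2}/n}(\frac{1}{n}F^n([0,1]^2))$. Since the $\delta$-neighbourhood of any set commutes with taking convex hulls, this upgrades to $\conv(K_n(F))\ssq B_{\sqrt{2}/n}(\conv(\frac{1}{n}F^n([0,1]^2)))$. Applying Lemma~\ref{l.MZ-estimate} to $G=F^n\in\hm{2}$ and dividing by $n$ yields $\conv(\frac{1}{n}F^n([0,1]^2))\ssq B_{\sqrt{2}/n}(\frac{1}{n}F^n([0,1]^2))$, so chaining the two inclusions and invoking $\frac{1}{n}F^n([0,1]^2)\ssq Q_n^*$ from Lemma~\ref{l.box_overestimation} gives $\rs{F}\ssq B_{2\sqrt{2}/n}(Q_n^*)$.

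\textbf{Upper direction.} From the second inclusion of Corollary~\ref{c.box-overestimation}, $Q_n^*\ssq B_{\sqrt{2}/n}(K_n^{2\eps}(F))$, so it suffices to show $K_n^{2\eps}(F)\ssq B_{\gamma_{\eps,n}}(\rs{F})$. This is the heart of the proof. For the optimal parameter $k=k_n$ I write $n=m k+r_n$ with $m=\lfloor n/k\rfloor$ and decompose any element of $K_n^{2\eps}(F)$ coming from a $2\eps$-pseudo-orbit $(\xi_j)_{j=0}^n$ as
\begin{equation*}
  \frac{\xi_n-\xi_0}{n} \ =\ \frac{1}{n}(\xi_n-\xi_{mk}) \ +\ \frac{mk}{n}\cdot\frac{\xi_{mk}-\xi_0}{mk}\ ,
\end{equation*}
bounding the first summand by $\frac{r_n}{n}(M+2\eps)$ exactly as in the proof of Lemma~\ref{difficultest}. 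The factor $u=(\xi_{mk}-\xi_0)/(mk)$ lies in $K_{mk}^{2\eps}(F)\ssq\conv(K_k^{2\eps}(F))$ by the same telescoping argument as in that lemma. Using the Lipschitz constant $L$ I show inductively that any $2\eps$-pseudo-orbit of length $k+1$ starting at $x$ satisfies $\|\eta_k-F^k(x)\|\leq 2\eps\sum_{j=0}^{k-1}L^j=2\eps\frac{L^k-1}{L-1}$, hence $K_k^{2\eps}(F)\ssq B_{\alpha_k}(K_k(F))$ with $\alpha_k=\frac{2\eps}{k}\cdot\frac{L^k-1}{L-1}$. Combining with (\ref{E_err_plausible}) and the convexity of $\rs{F}$, one obtains $\conv(K_k^{2\eps}(F))\ssq B_{\beta_k}(\rs{F})$ for $\beta_k=(c+2\eps\frac{L^k-1}{L-1})/k$, so there is $w\in\rs{F}$ with $\|u-w\|\leq\beta_{k_n}$. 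Writing $v-w=\frac{1}{n}(\xi_n-\xi_{mk})+\frac{mk}{n}(u-w)-\frac{r_n}{n}w$ and using $\|w\|\leq M$ (Lemma~\ref{props}) gives $\|v-w\|\leq\frac{2r_n}{n}(M+\eps)+(1-\frac{r_n}{n})\beta_{k_n}=\gamma_{\eps,n}$, as desired.

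The main obstacle is the telescoping/Lipschitz estimate in the upper direction: one must carefully balance the ``remainder'' contribution $\frac{r_n}{n}(M+2\eps)+\frac{r_n}{n}\|w\|$ (which is why the factor $(M+\eps)$ and not $(M+2\eps)$ appears) against the exponentially growing term $\alpha_k$, justifying the minimisation over $k\in\{1,\ldots,n\}$. Once both directions are in hand, taking the maximum yields (\ref{E_main_err}).
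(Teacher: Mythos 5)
Your proposal is correct and follows essentially the same route as the paper: the lower bound is exactly Lemma~\ref{Prop_mainalg_1} (Lemma~\ref{LProper3} plus the Misiurewicz--Ziemian estimate and the first inclusion of Lemma~\ref{l.box_overestimation}), and the upper bound reproduces Lemmas~\ref{L_dH_Kn_Kneps} and \ref{Prop_Kneps_incl_rs} via the same decomposition $n=m_nk_n+r_n$, the same telescoping into $\conv(K_{k_n}^{2\eps}(F))$, and the same bookkeeping that turns $\frac{r_n}{n}(M+2\eps)+\frac{r_n}{n}M$ into $\frac{2r_n}{n}(M+\eps)$. The only difference is that you carry out the final estimate pointwise with $v,u,w$ where the paper works with nested set inclusions, which is purely cosmetic.
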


\noindent
The proof is given in
Section~\ref{QuantitativeEstimates} below. 

\begin{rem}  \label{Rem_err_nstar}
  \begin{itemize}
  \item[(a)] It should be noted that the above estimate is rather of theoretical
    than practical interest. This is exemplified in part (b) of this remark
    below. We include it nevertheless, since on the one hand it demonstrates
    what is possible on the analytic side, and on the other hand the proof
    reflects and demonstrates very well how and why the nonlinearity of the
    dynamics complicates the efficient computation of rotation sets. 
  \item[(b)] In order to see why the above estimates are hardly relevant for the
    numerical implementation, suppose that the constant $c$ in
    (\ref{E_err_plausible}) is known (which is usually not the case) and
    relatively small, say, equal to $1$. Even in this case, in order to achieve
    an apriori error bound of order $10^{-2}$, the integer $k$ in the term
    $\frac{1}{k} \big{(} c + 2\epsilon \frac{L^k-1}{L-1} \big{)}$ in
    (\ref{E_min_gamma}) would have to be at least $100$ -- otherwise $\nicefrac{c}{k}> 100$
    -- but then at the same time $\eps$ would need to be smaller than
    $\big{(}\frac{2(L^k-1)}{L-1}\big{)}^{{ }_{-1}}$. Hence, even if $L$ is only $2$,
    this would require to work with a box diameter of $\eps\simeq 2^{-100}$,
    which is hardly possible with contemporary computers.
  \item[(c)] Fortunately, it turns out that in the actual implementation the
    convergence is usually much faster than indicated by the above error
    estimates. One possible reason is the fact that $f$ may possess a shadowing
    property. This leads to improved error bounds, where essentially the
    exponential term $\eps\frac{L^k-1}{L-1}$ can be dropped altogether. We
    discuss this in detail in the following subsection.
  \end{itemize}
\end{rem}

\subsection{Implications of shadowing.}\label{Shadowing}

Given a self map $g:X\to X$ of some metric space $X$, we say an orbit
$\nfolge{x_n}$ of $g$ {\em $\delta$-shadows} a sequence $\nfolge{\xi_n}$ if
$d(x_n,\xi_n)<\delta$ for all $n\in\N$. Given $\delta,\eps>0$, we say $g$ has
the {\em $\delta$-shadowing property with constant $\eps$} if all
$\eps$-pseudo-orbits of $g$ are $\delta$-shadowed by some orbit of $g$. If such
a constant $\eps=\eps(\delta)$ exists for all $\delta>0$, we simply say that $g$
has the {\em shadowing property}.

\clearpage
\begin{theorem} \label{t.shadowing} Suppose $f$ is a torus homeomorphism
  homotopic to the identity with lift $F\in\hm{2}$ and $\delta,\gamma\in (0,\nicefrac{1}{2})$
  are such that $d(x,y)<\delta$ implies $d(f(x),f(y))<\nicefrac{1}{2}-\gamma$ for all
  $x,y\in\T^2$. Further, assume that $f$ has the $\delta$-shadowing property
  with constant $\eps\in(0,\gamma)$. Then 
  \[
    \epsrs{F}{\eps} \ = \ \rs{F} \ .
  \]
\end{theorem}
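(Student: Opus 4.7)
The plan is to establish the nontrivial inclusion $\epsrs{F}{\eps}\ssq\rs{F}$, since the reverse inclusion is supplied by (\ref{Erset_subs_epsrs}). Given $v\in\epsrs{F}{\eps}$, fix a sequence of $\eps$-pseudo-orbits $(\xi^i_j)_{j=0}^{n_i}$ of $F$ with $\xi^i_0\in[0,1]^2$, $n_i\to\infty$, and $\tfrac{1}{n_i}(\xi^i_{n_i}-\xi^i_0)\to v$. The strategy is to replace each pseudo-orbit by a genuine $F$-orbit that remains within distance $\delta$ of it throughout. Since $\delta$ is a fixed constant, the tracking error contributes at most $2\delta/n_i$ to the normalised displacement and thus vanishes in the limit, so $v$ is realised by actual orbits of $F$ and therefore lies in $\rs{F}$.

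To build the tracking orbit, project the pseudo-orbit to $\T^2$: $(\pi(\xi^i_j))_{j=0}^{n_i}$ is an $\eps$-pseudo-orbit of $f$, so the $\delta$-shadowing property yields $x^i_0\in\T^2$ with $d(f^j(x^i_0),\pi(\xi^i_j))<\delta$ for all $j\leq n_i$. Because $\delta<1/2$, there is a unique lift $\tilde x^i_0\in\R^2$ of $x^i_0$ satisfying $\|\tilde x^i_0-\xi^i_0\|<\delta$, and then every $F^j(\tilde x^i_0)$ is some lift of $f^j(x^i_0)$.

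The core of the proof is the inductive claim $\|F^j(\tilde x^i_0)-\xi^i_j\|<\delta$ for all $j\leq n_i$. Assuming it at stage $j$, the hypothesis on $(\delta,\gamma)$ gives $d(f\pi F^j(\tilde x^i_0),f\pi\xi^i_j)<1/2-\gamma$ on $\T^2$; since this bound is strictly below $1/2$, it lifts to the $\R^2$-estimate $\|F^{j+1}(\tilde x^i_0)-F(\xi^i_j)\|<1/2-\gamma$ (by continuity the two $F$-images are the unique closest lifts of their projections). Combining this with the pseudo-orbit bound $\|F(\xi^i_j)-\xi^i_{j+1}\|\leq\eps<\gamma$ yields $\|F^{j+1}(\tilde x^i_0)-\xi^i_{j+1}\|<1/2$. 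On the other hand, the shadowing property at stage $j+1$ means that $F^{j+1}(\tilde x^i_0)-\xi^i_{j+1}=k+e$ for some $k\in\Z^2$ and some $e\in\R^2$ with $\|e\|<\delta<1/2$. Having $\|k+e\|<1/2$ and $\|e\|<1/2$ forces $k=0$ by the triangle inequality, whence $\|F^{j+1}(\tilde x^i_0)-\xi^i_{j+1}\|<\delta$.

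Finally, replacing $\tilde x^i_0$ by its representative in $[0,1)^2$ does not alter the displacement $F^{n_i}(\tilde x^i_0)-\tilde x^i_0$, by (\ref{Eaddintvect}). Hence $\tfrac{1}{n_i}(F^{n_i}(\tilde x^i_0)-\tilde x^i_0)\in K_{n_i}(F)$ differs from $\tfrac{1}{n_i}(\xi^i_{n_i}-\xi^i_0)$ by at most $2\delta/n_i\to 0$, so $v$ is also the limit of these genuine normalised displacements and lies in $\rs{F}$. The main obstacle is precisely the inductive step: making sure that the lifted shadowing orbit does not drift away from the pseudo-orbit by an integer translate. This is why the quantitative two-scale hypothesis $d(f(x),f(y))<1/2-\gamma$ together with $\eps<\gamma$ is imposed in place of a plain shadowing property, as the margins $\gamma$ on both sides provide the slack needed to absorb the pseudo-orbit jump without allowing the lift to shift by a nonzero integer vector.
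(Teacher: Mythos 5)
Your argument is correct and follows essentially the same route as the paper: lift the $\delta$-shadowing property from $f$ to $F$ by the same induction (using $d(f(x),f(y))<\nicefrac{1}{2}-\gamma$ and $\eps<\gamma$ to rule out an integer drift of the lifted shadowing orbit, since a ball of radius $\nicefrac{1}{2}$ projects injectively to $\T^2$), and then observe that the resulting $\nicefrac{2\delta}{n}$ bound on normalised displacements forces $\epsrs{F}{\eps}=\rs{F}$. The only cosmetic difference is that the paper isolates the lifting step as a separate lemma and concludes via $\dH(K_n(F),K_n^{\eps}(F))\leq \nicefrac{2\delta}{n}$ rather than directly from the accumulation-point definition.
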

\begin{corollary}
  If a torus homeomorphism $f$ homotopic to the identity has the shadowing
  property, then there exists $\eps_0>0$ such that $\epsrs{F}{\eps}=\rs{F}$ for
  any $\eps\in(0,\eps_0]$ and any lift $F$ of $f$.
\end{corollary}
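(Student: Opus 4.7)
The non-trivial inclusion is $\epsrs{F}{\eps} \ssq \rs{F}$, since $\rs{F} \ssq \epsrs{F}{\eps}$ holds for free by (\ref{Erset_subs_epsrs}). The plan is to take an arbitrary $v \in \epsrs{F}{\eps}$, realise it via a sequence of $\eps$-pseudo-orbits of $F$, project these down to $\eps$-pseudo-orbits of $f$ on the torus, shadow them by genuine orbits of $f$, and lift those shadowing orbits back to $\R^2$ in such a way that they realise the same rotation vector for $F$.

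I would begin by selecting $\eps$-pseudo-orbits $(\xi^i_j)_{j=0}^{n_i}$ of $F$ with $\xi^i_0\in[0,1]^2$, $n_i \to \infty$, and $(\xi^i_{n_i} - \xi^i_0)/n_i \to v$. Each finite pseudo-orbit is extended to an infinite one by appending the true $F$-orbit of $\xi^i_{n_i}$. Since $\pi:\R^2\to\torus$ is $1$-Lipschitz and satisfies $\pi \circ F = f \circ \pi$, the projected sequence $(\pi(\xi^i_j))_{j\in\N}$ is an $\eps$-pseudo-orbit of $f$. The $\delta$-shadowing hypothesis then supplies points $x_i \in \torus$ with $d(f^j(x_i), \pi(\xi^i_j)) < \delta$ for all $j\in\N$.

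The heart of the argument is to lift each shadowing orbit to an actual $F$-orbit in $\R^2$ staying $\delta$-close to the pseudo-orbit. Since $\delta < 1/2$, for every $j$ there is a unique lift $\tilde y^i_j \in \R^2$ of $f^j(x_i)$ with $d(\tilde y^i_j, \xi^i_j) < \delta$. The non-trivial point is to check that $F(\tilde y^i_j) = \tilde y^i_{j+1}$; since both are lifts of $f^{j+1}(x_i)$, they differ by an integer vector, so it suffices to show $d(F(\tilde y^i_j), \tilde y^i_{j+1}) < 1$. The hypothesis on $f$ lifts to $F$ by a connectedness argument: $F(B_\delta(\xi^i_j))$ is connected, contains $F(\xi^i_j)$, and projects into the disc of radius $1/2-\gamma$ around $\pi(F(\xi^i_j))$ on $\torus$, so it lies in the unique connected component $B_{1/2-\gamma}(F(\xi^i_j))$ of the $\pi$-preimage of this disc. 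In particular $d(F(\tilde y^i_j), F(\xi^i_j)) < 1/2 - \gamma$, and combining this with the pseudo-orbit bound $d(F(\xi^i_j), \xi^i_{j+1}) \leq \eps < \gamma$ and the shadowing bound $d(\xi^i_{j+1}, \tilde y^i_{j+1}) < \delta < 1/2$ yields, via the triangle inequality,
\[
d(F(\tilde y^i_j), \tilde y^i_{j+1}) \ < \ (1/2 - \gamma) + \gamma + 1/2 \ = \ 1,
\]
forcing the integer vector to vanish.

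With $F(\tilde y^i_j) = \tilde y^i_{j+1}$ established, $(\tilde y^i_j)_{j=0}^{n_i}$ is a genuine $F$-orbit satisfying $d(\tilde y^i_j, \xi^i_j) < \delta$ throughout, so
\[
\left\| \frac{F^{n_i}(\tilde y^i_0) - \tilde y^i_0}{n_i} - \frac{\xi^i_{n_i} - \xi^i_0}{n_i} \right\| \ < \ \frac{2\delta}{n_i} \ \longrightarrow \ 0,
\]
whence $(F^{n_i}(\tilde y^i_0) - \tilde y^i_0)/n_i \to v$ and $v \in \rs{F}$. The main obstacle is precisely this lifting step: the constraints $\eps < \gamma$ and $\delta < 1/2$ are both used crucially, first to guarantee that each $\tilde y^i_j$ is uniquely defined, and then to produce the strict bound $<1$ that rules out integer jumps and promotes the sequence of well-chosen lifts into a genuine $F$-orbit.
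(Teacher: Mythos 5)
Your argument is correct and follows essentially the same route as the paper: the paper first proves a lemma lifting the $\delta$-shadowing property from $f$ to $F$ (using that balls of radius $\nicefrac{1}{2}$ project injectively to $\T^2$, which is the same mechanism as your connectedness argument for promoting the chosen lifts to a genuine $F$-orbit) and then derives the $\nicefrac{2\delta}{n}$ displacement estimate to conclude $\epsrs{F}{\eps}=\rs{F}$. The only point left implicit in your write-up is the construction of $\eps_0$ from the bare shadowing hypothesis of the corollary: one fixes $\gamma\in(0,\nicefrac{1}{2})$, obtains $\delta\in(0,\nicefrac{1}{2})$ by uniform continuity of $f$ so that $d(x,y)<\delta$ implies $d(f(x),f(y))<\nicefrac{1}{2}-\gamma$, and sets $\eps_0=\min\{\eps(\delta),\nicefrac{\gamma}{2}\}$ --- a routine step, but it is precisely what reduces the corollary to the situation of Theorem~\ref{t.shadowing} in which your proof operates.
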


\noindent
For the proof of Theorem~\ref{t.shadowing}, we need the following elementary
statement.
\begin{lemma}
  Suppose $f,F,\delta,\gamma,\eps$ are chosen as in
  Theorem~\ref{t.shadowing}. Then $F$ has the $\delta$-shadowing property with
  constant $\eps$.
\end{lemma}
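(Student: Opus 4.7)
The plan is to project the given $\eps$-pseudo-orbit of $F$ down to $\T^2$, apply the shadowing property of $f$ there, and then carefully lift the shadowing orbit back to $\R^2$. Concretely, given an $\eps$-pseudo-orbit $(\xi_j)_{j\geq 0}$ of $F$, I would set $\bar\xi_j=\pi(\xi_j)$ where $\pi:\R^2\to\T^2$ is the canonical projection. Since $d_{\T^2}(f(\bar\xi_j),\bar\xi_{j+1})\leq\|F(\xi_j)-\xi_{j+1}\|\leq\eps$, the sequence $(\bar\xi_j)$ is an $\eps$-pseudo-orbit of $f$, and hence by hypothesis is $\delta$-shadowed by a true $f$-orbit $(x_j)_{j\geq 0}$.

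Next, I would exploit the condition $\delta<\nicefrac12$ to define the lift unambiguously: there exists a unique $y_0\in\R^2$ with $\pi(y_0)=x_0$ and $\|y_0-\xi_0\|<\nicefrac12$, and automatically $\|y_0-\xi_0\|=d_{\T^2}(x_0,\bar\xi_0)<\delta$. Setting $y_j:=F^j(y_0)$, each $y_j$ is a lift of $x_j$ since $F$ is a lift of $f$. The goal is to prove by induction that $\|y_j-\xi_j\|<\delta$ for every $j\geq 0$.

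The key intermediate estimate — and the main technical point — is a Euclidean version of the continuity assumption: if $y,\xi\in\R^2$ satisfy $\|y-\xi\|<\delta$, then $\|F(y)-F(\xi)\|<\nicefrac12-\gamma$. I would prove this via a connectedness argument. Consider the path $g(t)=F(\xi+t(y-\xi))-F(\xi)$ for $t\in[0,1]$, which is continuous with $g(0)=0$. For each $t$ the point $\xi+t(y-\xi)$ lies within distance $\delta$ of $\xi$, hence on the torus $d_{\T^2}(f(\overline{\xi+t(y-\xi)}),f(\bar\xi))<\nicefrac12-\gamma$. Thus $g(t)$ is trapped in $\bigcup_{k\in\Z^2}B_{1/2-\gamma}(k)$, whose components are pairwise disjoint since $\nicefrac12-\gamma<\nicefrac12$; by connectedness $g(t)$ remains in $B_{1/2-\gamma}(0)$, so in particular $\|g(1)\|=\|F(y)-F(\xi)\|<\nicefrac12-\gamma$.

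With this estimate in hand, the induction closes as follows. Assuming $\|y_j-\xi_j\|<\delta$, the triangle inequality gives
\[
\|y_{j+1}-\xi_{j+1}\|\ \leq\ \|F(y_j)-F(\xi_j)\|+\|F(\xi_j)-\xi_{j+1}\|\ <\ \bigl(\tfrac12-\gamma\bigr)+\eps\ <\ \tfrac12,
\]
using $\eps<\gamma$. On the other hand $y_{j+1}$ is a lift of $x_{j+1}$ and $d_{\T^2}(x_{j+1},\bar\xi_{j+1})<\delta<\nicefrac12$, so the unique lift of $x_{j+1}$ within distance $\nicefrac12$ of $\xi_{j+1}$ has Euclidean distance equal to $d_{\T^2}(x_{j+1},\bar\xi_{j+1})<\delta$; by uniqueness this lift must be $y_{j+1}$, yielding $\|y_{j+1}-\xi_{j+1}\|<\delta$ and completing the induction. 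The main obstacle is the connectedness argument underlying the key estimate — this is where the quantitative gap $\gamma$ between $\nicefrac12$ and the modulus of continuity of $f$ is crucial, and without it one cannot conclude that $F(y)$ and $F(\xi)$ lie in the same sheet.
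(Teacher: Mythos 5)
Your proposal is correct and follows essentially the same route as the paper: project the pseudo-orbit to $\T^2$, apply the shadowing property of $f$, lift the shadowing orbit back via the unique lift of $x_0$ within distance $\delta$ of $\xi_0$, and close an induction using that a $(\nicefrac{1}{2}-\gamma)+\eps<\nicefrac{1}{2}$ bound forces the iterate to be the distinguished (injectively projecting) lift. The only difference is that you make explicit, via the connectedness argument, why the torus continuity hypothesis transfers to the specific lifts $F(y_j)$ and $F(\xi_j)$ — a detail the paper's proof uses implicitly — so your write-up is, if anything, slightly more complete.
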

\begin{proof}
  Suppose $\nfolge{\hat\xi_n}$ is an $\eps$-pseudo-orbit for $F$. Then
  $\xi_n=\pi(\hat\xi_n)$ defines an $\eps$-pseudo-orbit of $f$, and we can
  therefore find some $x_0\in\T^2$ such that $d\left(f^n(x_0),\xi_n\right)<\delta$ for all
  $n\geq 0$. Let $\hat x_0$ be the unique lift of $x_0$ in $B_\delta\big{(}\hat
  \xi_0\big{)}$. We claim that $d(F^n(\hat x_0),\hat\xi_n)<\delta$ for all $n\geq 0$, so that the orbit of $\hat x_0$ is the required $\delta$-shadowing~orbit.
 
  For $n=0$ there is nothing to prove. If $d(F^n(\hat x_0),\hat\xi_n)<\delta$
  for some $n\geq 0$, then $F^{n+1}(\hat x_0)\in
  B_{1/2-\gamma}\big{(}F(\hat\xi_n)\big{)}\ssq
  B_{1/2}\big{(}\hat\xi_{n+1}\big{)}$. However, as
  $B_{1/2}\big{(}\hat\xi_{n+1}\big{)}$ projects injectively to $\T^2$ and
  $d(\pi(F^{n+1}(\hat x_0),\pi(\hat\xi_{n+1})) =
  d(f^{n+1}(x_0),\xi_{n+1})<\delta$, we also obtain
  $d(F^{n+1}(\hat x_0),\hat\xi_{n+1})) <\delta$ as required.
\end{proof}
\begin{proof}[\textbf{\textit{Proof of Theorem~\ref{t.shadowing}}}]
  Under the assumptions of the theorem, any finite $\eps$-pseudo-orbit $(\hat
  \xi_j)_{j=0}^n$ of $F$ is $\delta$-shadowed by some $F$-orbit $(x_j)_{j=0}^n$, so
  that
  \[
  \left| \frac{\hat\xi_n-\hat\xi_0}{n} - \frac{F^n(x_0)-x_0}{n}\right| \ \leq \
  \frac{2\delta}{n} \ .
   \]
   Therefore $\dH(K_n(F),K^\eps_n(F)) \leq \nicefrac{2\delta}{n}$, and due to the definition
   of the sets $K_n(F)$, $K^\eps_n(F)$ and $\epsrs{F}{\eps}$ and the convergence
   $K_n(F) \convH \rs{F}$ as $n \to \infty$ by Lemma~\ref{main_Kn_convH}, this implies
   $\epsrs{F}{\eps}=\rs{F}$.
\end{proof}

\noindent
As mentioned before, the shadowing property leads to improved error estimates
for the set-oriented computation of the rotation set, and in particular allows
to eliminate the exponential term $\gamma_{\eps,n}$ in
Theorem~\ref{Thm_main_alg_err}.
\begin{theorem}\label{t.shadowing_error}
  Suppose that $f$ satisfies the assumptions of
  Theorem~\ref{Thm_main_alg_err} and the bounded deviations hypothesis
  (\ref{E_err_plausible}) with constant $c>0$. Let $F\in\hm{2}$ be a lift of $f$
  and $Q_n^\ast$ be defined by (\ref{E_def_Qn}). Then
 \[
        \dH(Q^\ast_n,\rs{F}) \ \leq \ \frac{\sqrt{2}+1+c}{n} \ .
 \]
\end{theorem}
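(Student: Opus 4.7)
The plan is to chain three Hausdorff estimates that are already set up by the preceding results: namely, (i) that $Q_n^\ast$ is sandwiched between $K_n(F)$ and $K_n^{2\eps}(F)$ up to an error of order $\sqrt{2}/n$ by Corollary~\ref{c.box-overestimation}; (ii) that under the shadowing property one has $\dH(K_n^{2\eps}(F),K_n(F))\leq 2\delta/n$ for the appropriate $\delta\in(0,\nicefrac{1}{2})$, as in the proof of Theorem~\ref{t.shadowing}; and (iii) that the bounded deviation hypothesis (\ref{E_err_plausible}) gives $\dH(K_n(F),\rs{F})\leq c/n$. A triangle-inequality argument then yields the claimed bound $(\sqrt{2}+1+c)/n$.

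More concretely, I would first invoke Corollary~\ref{c.box-overestimation} in both directions to obtain
\[
K_n(F) \ \ssq \ B_{\frac{\sqrt{2}}{n}}(Q_n^\ast) \eqand Q_n^\ast \ \ssq \ B_{\frac{\sqrt{2}}{n}}\big(K_n^{2\eps}(F)\big).
\]
Next I would observe that the shadowing hypothesis, together with the lift-shadowing lemma preceding Theorem~\ref{t.shadowing}, ensures that every finite $2\eps$-pseudo-orbit of $F$ is $\delta$-shadowed by a genuine orbit for some $\delta<\nicefrac{1}{2}$, provided $\eps$ has been chosen compatibly with the shadowing constants of $f$. Exactly as in the proof of Theorem~\ref{t.shadowing}, this gives $\dH(K_n(F),K_n^{2\eps}(F))\leq 2\delta/n<1/n$, so that $K_n^{2\eps}(F)\ssq B_{1/n}(K_n(F))$.

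Finally I would apply the bounded deviation property $K_n(F)\ssq B_{c/n}(\rs{F})$ and $\rs{F}\ssq B_{c/n}(K_n(F))$, and combine the inclusions:
\begin{align*}
Q_n^\ast & \ \ssq \ B_{\frac{\sqrt{2}}{n}}\big(K_n^{2\eps}(F)\big) \ \ssq \ B_{\frac{\sqrt{2}+1}{n}}(K_n(F)) \ \ssq \ B_{\frac{\sqrt{2}+1+c}{n}}(\rs{F}) ,\\
\rs{F} & \ \ssq \ B_{\frac{c}{n}}(K_n(F)) \ \ssq \ B_{\frac{c+\sqrt{2}}{n}}(Q_n^\ast).
\end{align*}
The Hausdorff distance is then the larger of the two radii, which is $(\sqrt{2}+1+c)/n$.

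The step most likely to require care is (ii): one has to make sure that the shadowing of $2\eps$-pseudo-orbits (rather than $\eps$-pseudo-orbits) is available for the box-diameter parameter $\eps$ appearing in the algorithm, and that the corresponding shadowing distance $\delta$ stays strictly below $\nicefrac{1}{2}$, so that $2\delta<1$ and the constant $1$ appears in the final bound. Beyond this bookkeeping, everything is a direct composition of the results that have already been established in the excerpt, and in particular no new dynamical argument is needed.
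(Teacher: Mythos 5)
Your proof is correct and follows essentially the same route as the paper: the upper inclusion $Q_n^\ast\ssq B_{(\sqrt{2}+1+c)/n}(\rs{F})$ is obtained exactly as in the paper, via $Q_n^\ast\ssq B_{\sqrt{2}/n}(K_n^{2\eps}(F))$, the shadowing estimate $K_n^{2\eps}(F)\ssq B_{2\delta/n}(K_n(F))\ssq B_{1/n}(K_n(F))$, and the bounded deviation property. The only (harmless, in fact slightly advantageous) deviation is in the reverse inclusion, where you use Corollary~\ref{c.box-overestimation} together with $\rs{F}\ssq B_{c/n}(K_n(F))$ to get radius $(\sqrt{2}+c)/n$, whereas the paper invokes Lemma~\ref{Prop_mainalg_1} to get radius $2\sqrt{2}/n$ --- your version is dominated by the claimed bound for every $c>0$, while the paper's requires $c\geq\sqrt{2}-1$ for the stated maximum to be $(\sqrt{2}+1+c)/n$.
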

\noindent
The proof is postponed until the end of the next subsection.

\subsection{Quantitative estimates -- proofs of Theorems~\ref{Thm_main_alg_err}
  and \ref{t.shadowing_error}.}
\label{QuantitativeEstimates}

Throughout this section, we assume that $F\in\hm{2}$  and $Q_n,\ Q_n^\ast$
are defined as in the preceding section. Let
\begin{align*}
  P_n^\epsilon(F) \ = \  \left\{ \xi_n \in \mathbb{R}^2 \mid (\xi_j)_{j=0}^n
    \text{ is an } \epsilon\text{-pseudo-orbit of } F \text{ with } \xi_0 \in
    [0,1]^2 \right\}
\end{align*}
and note that $P^0_n(F)=F^n\left([0,1]^2\right)$. Then the statement of
Lemma~\ref{l.box_overestimation} can be rewritten as 
\begin{align}
  P^0_n(F) \ \ssq \ Q_n \ \subseteq \ P_n^{2\epsilon}(F) \ .
  \label{E_def_Pneps}
\end{align}
This leads to the following initial
estimate.

\begin{lemma} \label{Prop_mainalg_1} Suppose that $F \in \hm{2}$ is
  Lipschitz continuous with Lipschitz constant $L>1$ and for each $\eps>0$ the
  constants $\eta,R>0$, are chosen such that $L\eta\leq R\leq \eps$. Then, for
  every $n\in\mathbb{N}$, we have
  \begin{displaymath}
    \rs{F} \ \subseteq \ \overline{B_{\frac{2\sqrt{2}}{n}}(Q_n^\ast)} \ .
  \end{displaymath}
\end{lemma}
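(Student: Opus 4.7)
The plan is to chain together Lemma~\ref{LProper3}, Lemma~\ref{l.MZ-estimate} applied to $G=F^n$, and Lemma~\ref{l.box_overestimation}, routing the chain through the set $F^n([0,1]^2)/n$ rather than through $K_n(F)$ directly. This avoids the loss of a factor of $\sqrt{2}/n$ that would arise from using Corollary~\ref{c.MZ-estimate} straight off the shelf, which would only give a bound of $4\sqrt{2}/n$.

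First I would record the elementary estimate, already stated in the preliminaries,
\[
\dH\!\left(K_n(F),\tfrac{1}{n}F^n([0,1]^2)\right) \ < \ \tfrac{\sqrt{2}}{n}\,,
\]
which comes from comparing $(F^n(x)-x)/n$ with $F^n(x)/n$ on $x\in[0,1]^2$. In particular $K_n(F)\ssq B_{\sqrt{2}/n}\!\left(\tfrac{1}{n}F^n([0,1]^2)\right)$. Since an open $\delta$-neighbourhood of a convex set is convex, the set $B_{\sqrt{2}/n}\!\left(\conv\!\left(\tfrac{1}{n}F^n([0,1]^2)\right)\right)$ is convex, so passing to convex hulls gives
\[
\conv(K_n(F)) \ \ssq \ B_{\sqrt{2}/n}\!\left(\conv\!\left(\tfrac{1}{n}F^n([0,1]^2)\right)\right).
\]

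Next I would apply Lemma~\ref{l.MZ-estimate} to $G=F^n\in\hm{2}$, which yields $\conv(F^n([0,1]^2))\ssq \overline{B_{\sqrt{2}}(F^n([0,1]^2))}$. Dividing by $n$ and combining with the previous inclusion,
\[
\conv(K_n(F)) \ \ssq \ \overline{B_{2\sqrt{2}/n}\!\left(\tfrac{1}{n}F^n([0,1]^2)\right)}.
\]
Now invoke the first inclusion of Lemma~\ref{l.box_overestimation}, $\tfrac{1}{n}F^n([0,1]^2)\ssq Q_n^\ast$, to replace the inner set by $Q_n^\ast$. Finally, Lemma~\ref{LProper3} gives $\rs{F}\ssq \conv(K_n(F))$, and the chain closes with
\[
\rs{F} \ \ssq \ \overline{B_{2\sqrt{2}/n}(Q_n^\ast)}\,.
\]

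There is no real obstacle here; the only subtle point is the ordering of the approximations. Going through $\tfrac{1}{n}F^n([0,1]^2)$ rather than $K_n(F)$ is what produces the sharp constant $2\sqrt{2}$ — applying Corollary~\ref{c.MZ-estimate} to $K_n(F)$ first and then using the box-inclusion would cost an extra $\sqrt{2}/n$. The Lipschitz hypothesis is not used in this lemma per se; it enters only through Lemma~\ref{l.box_overestimation} to guarantee that the box-by-box overestimation captures all true images, which is what allows the transition from $\tfrac{1}{n}F^n([0,1]^2)$ to $Q_n^\ast$.
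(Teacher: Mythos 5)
Your proof is correct and follows essentially the same route as the paper's: Lemma~\ref{LProper3}, the elementary estimate $\dH(K_n(F),\tfrac{1}{n}F^n([0,1]^2))<\tfrac{\sqrt{2}}{n}$, Lemma~\ref{l.MZ-estimate} applied to $G=F^n$, and the first inclusion of Lemma~\ref{l.box_overestimation}, chained in the same order through $\tfrac{1}{n}F^n([0,1]^2)$. Your remark about why this routing yields the constant $2\sqrt{2}$ rather than $4\sqrt{2}$ matches the structure of the paper's argument exactly.
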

 \vspace{-8pt}
\begin{proof}
  We have 
  \begin{eqnarray*}
    \rs{F} & \stackrel{\textrm{Lemma~\ref{LProper3}}}{\ssq} & \conv(K_n(F)) \ \ssq 
    \ \overline{B_{\frac{\sqrt{2}}{n}}\left(\frac{1}{n}\conv\big{(}F^n([0,1]^2\big{)}\right)} \\ & \stackrel{\textrm{Lemma~\ref{l.MZ-estimate}}}{\ssq} &
    \overline{B_{\frac{2\sqrt{2}}{n}}\left(\frac{1}{n}\big{(}F^n([0,1]^2\big{)}\right)} \ \stackrel{\textrm{Lemma}~\ref{l.box_overestimation}}{\ssq} \ 
    \overline{B_{\frac{2\sqrt{2}}{n}}(Q_n^\ast)} \ . 
  \end{eqnarray*}
\end{proof}

\begin{lemma} \label{L_dH_Kn_Kneps} Suppose that $F \in \hm{2}$ is
  Lipschitz continuous with Lipschitz constant $L>1$ and $\eps>0$. Then, for
  all $n\in\N$, we have
  \begin{align*}
    \dH \big{(}K_n^{2\epsilon}(F), K_n(F)\big{)} \ \leq \ \frac{2\epsilon(L^n-1)}{n(L-1)} \
    \eqqcolon \kappa_{\epsilon,n}\ .
  \end{align*}
\end{lemma}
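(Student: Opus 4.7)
The plan is to compare an arbitrary $2\eps$-pseudo-orbit with the genuine $F$-orbit starting at the same initial point and to control the propagation of the one-step errors using the Lipschitz constant $L$. The inclusion $K_n(F)\subseteq K_n^{2\eps}(F)$ being immediate, the Hausdorff bound reduces to showing $K_n^{2\eps}(F)\subseteq \overline{B_{\kappa_{\eps,n}}(K_n(F))}$.

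First I would fix a $2\eps$-pseudo-orbit $(\xi_j)_{j=0}^n$ of $F$ with $\xi_0\in[0,1]^2$, set $x_j=F^j(\xi_0)$, and define the error sequence $e_j=\|\xi_j-x_j\|$. Then $e_0=0$ and the triangle inequality together with the Lipschitz property give
\[
e_{j+1} \ \leq \ \|\xi_{j+1}-F(\xi_j)\| + \|F(\xi_j)-F(x_j)\| \ \leq \ 2\eps + L\, e_j.
\]
Solving the recursion, $e_n\leq 2\eps\sum_{i=0}^{n-1}L^i = 2\eps\,\frac{L^n-1}{L-1}$.

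Next I would divide by $n$ and subtract the common base point $\xi_0=x_0$ to obtain
\[
\left\|\frac{\xi_n-\xi_0}{n}-\frac{F^n(\xi_0)-\xi_0}{n}\right\| \ \leq \ \frac{2\eps(L^n-1)}{n(L-1)} \ = \ \kappa_{\eps,n}.
\]
Since the right-hand vector lies in $K_n(F)$ and the left-hand vector is an arbitrary element of $K_n^{2\eps}(F)$, this establishes $K_n^{2\eps}(F)\subseteq \overline{B_{\kappa_{\eps,n}}(K_n(F))}$ and completes the proof.

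There is no real obstacle here beyond the routine Gr\"onwall-type iteration; the only point to check carefully is that the base point of the pseudo-orbit and the true orbit agree, so that the $\frac{1}{n}\xi_0$ and $\frac{1}{n}F^0(\xi_0)$ terms cancel and the full geometric sum appears in the bound rather than a shifted version.
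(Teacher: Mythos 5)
Your proposal is correct and follows essentially the same route as the paper's proof: the one-sided inclusion $K_n(F)\subseteq K_n^{2\eps}(F)$ is immediate, and the other direction is obtained by the same Lipschitz recursion $e_{j+1}\leq Le_j+2\eps$ with $e_0=0$, yielding $\|\xi_n-F^n(\xi_0)\|\leq 2\eps\frac{L^n-1}{L-1}$, followed by the same decomposition of $\frac{1}{n}(\xi_n-\xi_0)$ into an element of $K_n(F)$ plus an error of norm at most $\kappa_{\eps,n}$. No gaps.
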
 

\begin{proof}
  Let $(\hat\xi_j)_{j=0}^ {n}$ be an $2\epsilon$-pseudo-orbit of $F$ with
  $\hat\xi_0 \in [0,1]^2$. Using the fact that $\|\hat\xi_1 - F(\hat\xi_0)\|
  \leq 2\epsilon$ and
  \begin{align*}
    \|\hat\xi_n-F^j(\hat\xi_0)\| 
   \ \leq \ \|F(\hat\xi_{j-1}) - F(F^{j-1}(\hat\xi_0))\| + 2\epsilon
    \ \leq \ L  \|\hat\xi_{j-1}-F^{j-1}(\hat\xi_0)\| + 2\epsilon,
  \end{align*}
  for all $j=1\ld n-1$, we recursively obtain the estimate
  \begin{align*}
    \| \hat\xi_n - F^n(\hat\xi_0)\| \ \leq \ 2\epsilon \sum\limits_{i=0}^ {n-1} L^i \ = \
    2\epsilon \frac{L^n -1}{L-1} \ .
  \end{align*}
  Thus, for any $v=\frac{1}{n}(\xi_n - \xi_0) \in
  K_n^{2\epsilon}(F)$ 
  we have 
  \begin{align*}
    \frac{\xi_n-\xi_0}{n} \ = \ \frac{\xi_n - F^n(\xi_0)}{n} +
    \underbrace{\frac{F^n(\xi_0)-\xi_0}{n}}_{\in K_n(F)} \ ,  
  \end{align*}
  so that   \vspace{-8pt}
  \[
  K^{2\eps}_n(F) \ \ssq \ \overline{B_{\frac{2\epsilon(L^n-1)}{n(L-1)}}(K_n(F))} \ .
  \]
  Since conversely we always have $K_n(F)\ssq K^{2\eps}_n(F)$, this proves the
  statement. 
\end{proof}

\begin{lemma} \label{Prop_Kneps_incl_rs} Suppose that $F \in \hm{2}$ is
  Lipschitz continuous with Lipschitz constant $L>1$ and $\eps>0$. Further,
  assume that $F$ additionally satisfies (\ref{E_err_plausible}) with
  $c>0$. Then for all $n\geq 0$ we have
  \begin{align*}
    K_n^{2\epsilon}(F) \  \subseteq \ \overline{B_{\gamma_{\eps,n}}(\rs{F})} \ ,
  \end{align*}
  where $\gamma_{\epsilon,n}$ is defined as in (\ref{E_min_gamma}).
\end{lemma}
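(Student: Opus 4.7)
The plan is to emulate the splitting argument from Lemma~\ref{difficultest}, but to estimate each of the two resulting pieces using Lemma~\ref{L_dH_Kn_Kneps} and the bounded deviation hypothesis~(\ref{E_err_plausible}). Fix $v=(\xi_n-\xi_0)/n\in K_n^{2\epsilon}(F)$ coming from a $2\epsilon$-pseudo-orbit $(\xi_j)_{j=0}^n$ with $\xi_0\in [0,1]^2$, and let $k\in\{1,\dots,n\}$ with $n=m k+r$, where $m=\lfloor n/k\rfloor$ and $r=n\bmod k$. Decompose
\[
v \ = \ \frac{mk}{n}\,w \ +\ \frac{\xi_n-\xi_{mk}}{n},\qquad
w\ :=\ \frac{\xi_{mk}-\xi_0}{mk}.
\]

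First I would handle the bulk term $w$. Exactly as in the proof of Lemma~\ref{difficultest}, the concatenation of the $m$ sub-pseudo-orbits of length $k+1$ gives $w\in\conv\bigl(K_k^{2\epsilon}(F)\bigr)$. Lemma~\ref{L_dH_Kn_Kneps} yields $K_k^{2\epsilon}(F)\subseteq \overline{B_{\kappa_{\epsilon,k}}(K_k(F))}$, while the bounded deviation hypothesis~(\ref{E_err_plausible}) gives $K_k(F)\subseteq \overline{B_{c/k}(\rs{F})}$. Since $\rs{F}$ is convex by Theorem~\ref{Thm_convexity}, the convex hull of these nested neighbourhoods is again a neighbourhood of $\rs{F}$, so
\[
\conv\bigl(K_k^{2\epsilon}(F)\bigr)\ \subseteq\ \overline{B_{\alpha_k}(\rs{F})},\qquad
\alpha_k\ :=\ \frac{1}{k}\!\left(c+2\epsilon\,\frac{L^k-1}{L-1}\right).
\]
Pick $u\in\rs{F}$ with $\|w-u\|\leq \alpha_k$.

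Next I would bound the remainder term. Since $(\xi_{mk+j})_{j=0}^{r}$ is a $2\epsilon$-pseudo-orbit, iterated triangle inequality as in~(\ref{rest}) gives $\|\xi_n-\xi_{mk}\|\le r(M+2\epsilon)$, and Lemma~\ref{props}(ii) yields $\|u\|\leq M$. Writing $v-u=\frac{mk}{n}(w-u)-\frac{r}{n}u+\frac{1}{n}(\xi_n-\xi_{mk})$ and using $mk/n = 1-r/n$, the triangle inequality gives
\[
\|v-u\| \ \leq\ \Bigl(1-\tfrac{r}{n}\Bigr)\alpha_k \ +\ \tfrac{r}{n}M \ +\ \tfrac{r}{n}(M+2\epsilon)
\ =\ \Bigl(1-\tfrac{r}{n}\Bigr)\alpha_k \ +\ \tfrac{2r}{n}(M+\epsilon).
\]
Since $k\in\{1,\dots,n\}$ was arbitrary, I minimise over $k$; choosing $k=k_n$ (so $r=r_n$) realises the right-hand side of~(\ref{E_min_gamma}) and gives $\|v-u\|\leq \gamma_{\epsilon,n}$. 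As $v\in K_n^{2\epsilon}(F)$ was arbitrary, we obtain $K_n^{2\epsilon}(F)\subseteq \overline{B_{\gamma_{\epsilon,n}}(\rs{F})}$.

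I do not expect genuine obstacles here; all of the ingredients are already in place. The one point that needs a little care is the bookkeeping in step two: the optimal $k_n$ is not fixed a priori, so one has to verify that the split error $\tfrac{2r}{n}(M+\epsilon)$ combines correctly with the convex-combination factor $1-\tfrac{r}{n}$ to land on exactly the formula~(\ref{E_min_gamma}), and that the convexity of $\rs{F}$ is genuinely needed at the step where we pass from $\conv(K_k^{2\epsilon}(F))$ to a neighbourhood of $\rs{F}$.
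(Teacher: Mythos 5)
Your proposal is correct and follows essentially the same route as the paper: the same splitting of $n=m_nk_n+r_n$ via inclusion (\ref{diffproof1}), the same combination of Lemma~\ref{L_dH_Kn_Kneps} with (\ref{E_err_plausible}) and the convexity of $\rs{F}$, and the same use of $\rs{F}\subseteq\overline{B_M(0)}$ to absorb the scaling factor $1-\nicefrac{r_n}{n}$ (which you handle pointwise via $\|u\|\leq M$ rather than by set inclusions). The bookkeeping lands exactly on $\gamma_{\epsilon,n}$ as in the paper.
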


\begin{proof}
  Let $k\in\mathbb{N}$. Applying the estimate for the Hausdorff distance between
  the sets~$K_k^{2\epsilon}(F)$ and $K_k(F)$, denoted by $\kappa_{\epsilon,k}$ in
  Lemma \ref{L_dH_Kn_Kneps}, and the assumption (\ref{E_err_plausible}), we
  obtain
\begin{equation}\begin{split}
 \lefteqn{ \conv\left(K_k^{2\epsilon}(F)\right)
   \  \subseteq \ \conv \left(B_{\kappa_{\epsilon,k}}(K_k(F)) \right) }
    \\
   &\subseteq \ \conv \left(B_{\frac{c}{k}+\kappa_{\epsilon,k}}(\rs{F}) \right) 
    \label{E_Kneps_incl1} \
   = \  \overline{B_{\frac{c}{k}+\kappa_{\epsilon,k}}(\rs{F})}\ .    
 \end{split}
\end{equation}
For the last equality, note that $\rs{F}$ is convex.
Let $k_n \in \{1,\ldots,n\}$ be the natural number for which the minimum in the
definition of~$\gamma_{\epsilon,n}$ in~(\ref{E_min_gamma}) is attained. Further, let $m_n\in\N,\ r_n \in
\{0,\ldots,k_n-1\}$ be such that $n = m_n k_n + r_n$.  By the inclusion
(\ref{diffproof1}) in the proof of Lemma~\ref{difficultest}, since $n \geq k_n$,
we know that
  \begin{align}
    K_n^{2\eps}(F) &\subseteq \ \overline{B_{\frac{r_n}{n}(M +
        2\epsilon)}\left(\left( 1 - \frac{r_n}{n} \right) \conv \left(
        K_{k_n}^{2\epsilon}(F) \right)\right)} \ .
    \label{E_Kneps_incl2}
  \end{align}
  Combining (\ref{E_Kneps_incl1}) and (\ref{E_Kneps_incl2}) and by the choice of $k_n$ and the definition (\ref{E_min_gamma})
  of~$\gamma_{\epsilon,n}$, we obtain
  \begin{eqnarray*}
    K_n^{2\eps}(F) & \subseteq &
    \overline{B_{\frac{r_n}{n}(M+2\epsilon)}\left(\left(1-\frac{r_n}{n}\right)
        B_{\frac{c}{k}+\kappa_{\eps,k_n}}(\rs{F})\right)} \\
    & = & \overline{B_{\frac{r_n}{n}(M+2\epsilon)+ 
        \left(1-\frac{r_n}{n}\right)\left(\frac{c}{k_n}+\kappa_{\epsilon,k_n} \right)}
      \left(\left(1-\frac{r_n}{n}\right)\rs{F}\right)} \\
    &\subseteq & \overline{B_{\frac{r_n}{n}(2M+2\epsilon) + \left(1-\frac{r_n}{n}\right) 
        \left(\frac{c}{k_n}+\kappa_{\epsilon,k_n}\right)}(\rs{F})} \\
    & = &\overline{B_{\gamma_{\epsilon,n}}(\rs{F})} \ .
  \end{eqnarray*}
  For the inclusion from the second to the third line,
  note that $\rs{F}\ssq \overline{B_M(0)}$, so that $\left(1-\frac{r_n}{n}\right)\rs{F}\ssq
  \overline{B_{\frac{r_n}{n}M}\left(\rs{F}\right)}$.
\end{proof}

\begin{proof}[\textbf{\textit{Proof of Theorem \ref{Thm_main_alg_err}}}.]

In Lemma \ref{Prop_mainalg_1} we deduced that
\begin{align*}
  \rs{F} \subseteq \overline{B_{\frac{2\sqrt{2}}{n}}(Q_n^\ast)} \ .
\end{align*}
Conversely, from (\ref{E_def_Pneps}) and Lemma~\ref{Prop_Kneps_incl_rs} we obtain
\begin{align*}
  Q_n^\ast \ =\ \frac{1}{n} Q_n
  \ \subseteq\  \frac{1}{n} P_n^{2\epsilon}(F)
  \ \subseteq\ \overline{B_{\frac{\sqrt{2}}{n}}\left(K_n^{2\epsilon}(F)\right)}
  \ \subseteq \ \overline{B_{\frac{\sqrt{2}}{n}+\gamma_{\epsilon,n}}(\rs{F})} \ .
\end{align*} 
Altogether, we obtain the error estimate (\ref{E_main_err}). \
\end{proof}

\begin{proof}[\textbf{\textit{Proof of Theorem~\ref{t.shadowing_error}.}}]
  On the one hand, we have
  \begin{equation}
    \rs{F} \ \ssq \ B_{\frac{2\sqrt{2}}{n}}(Q^\ast_n) \
    \end{equation}
    by Lemma~\ref{Prop_mainalg_1}. On the other hand, we have shown in the proof of
    Theorem~\ref{t.shadowing} that $K_n^{2\eps}(F) \ssq B_{\nicefrac{2\delta}{n}}(K_n(F))
    \ssq B_{\nicefrac{1}{n}}(K_n(F))$ (note that $\delta<\nicefrac{1}{2}$ by assumption) and thus~obtain
  \begin{eqnarray*}
    Q^*_n & \ssq & \frac{1}{n} P^{2\eps}_n(F) \ \ssq \ B_{\frac{\sqrt{2}}{n}}(K^{2\eps_n}(F)) \\ &
    \ssq & B_{\frac{\sqrt{2}+1}{n}}(K_n(F)) \ssq B_{\frac{\sqrt{2}+1+c}{n}}(\rs{F}) \ .
  \end{eqnarray*}
This shows the required estimate.
\end{proof}

\section{Numerical implementation and results} \label{sec_nums}

\noindent
In order to implement the above algorithm and to apply it to some specific
examples, we consider a standard family of (lifts of) torus diffeomorphisms
given by
\begin{equation} \label{e.standard_family}
  F_{\alpha,\beta}:\mathbb{R}^2 \to\mathbb{R}^2 \ , \quad (x,y) \mapsto \big{(}x+\alpha\sin(2\pi(y+\beta\sin(2\pi x))),\ y+\beta\sin(2\pi x)\big{)}\ ,
\end{equation}
where $\alpha,\beta\in\R$. Note that $F_{\alpha,\beta}$ is obtained as the
composition of two skew shifts, $F_{\alpha,\beta}=F_\alpha\circ F_\beta$, where
\begin{align*}
    F_\alpha(x,y) = \big{(}x+\alpha\sin(2\pi y),\ y\big{)}  \eqand  
    F_\beta(x,y)  = \big{(}x,\ y+\beta\sin(2\pi x)\big{)}\ .
\end{align*}
See
\cite{LeboeufKurchanFeingoldArovas1990PhaseSpaceLocalization,Jaeger2011EllipticStars}
for previous numerical studies and \cite{MisiurewiczZiemian1989RotationSets} for
structurally similar examples. For specific parameter values, the rotation set of $F_{\alpha,\beta}$ can easily
be determined analytically, which allows to test the numerical algorithm in a
controlled setting.

\begin{lemma} \label{l.rotset_F11}
  $\rs{F_{1,1}}=[-1,1]^2. $
\end{lemma}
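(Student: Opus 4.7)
The plan is to establish the equality by a two-sided inclusion, relying on the convexity of the rotation set (Theorem~\ref{Thm_convexity}).

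For the inclusion $\rs{F_{1,1}}\subseteq [-1,1]^2$, I would use that the per-step displacement
\[
\varphi(x,y) \ = \ F_{1,1}(x,y)-(x,y) \ = \ \bigl(\sin(2\pi(y+\sin(2\pi x))),\, \sin(2\pi x)\bigr)
\]
satisfies $\|\varphi\|_\infty\leq 1$ componentwise. Writing $F_{1,1}^n(x,y)-(x,y)$ as a telescoping sum of displacements along the orbit and applying the triangle inequality componentwise, one gets $\|(F_{1,1}^n(x,y)-(x,y))/n\|_\infty\leq 1$ for all $n$ and all $(x,y)$, hence every accumulation point of the defining sequences lies in $[-1,1]^2$.

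For the reverse inclusion, the strategy is to exhibit four periodic points of $f_{1,1}$ realising the four vertices $(\pm 1,\pm 1)$ of the target square as rotation vectors; convexity (Theorem~\ref{Thm_convexity}) then forces $\rs{F_{1,1}}$ to contain the entire convex hull $[-1,1]^2$. Concretely, a fixed point of $f_{1,1}$ with integer displacement $v=(v_1,v_2)\in\Z^2$ under the lift requires
\begin{align*}
  \sin(2\pi x) \ &= \ v_2,\\
  \sin(2\pi(y+\sin(2\pi x))) \ = \ \sin(2\pi(y+v_2)) \ = \ \sin(2\pi y) \ &= \ v_1,
\end{align*}
where in the second equation I used that $v_2\in\Z$ makes the shift by $\sin(2\pi x)=v_2$ disappear modulo the $2\pi$-periodicity of sine -- this is the one small observation that makes the system decouple into two independent scalar equations. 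Each has a solution for $v_1,v_2\in\{-1,+1\}$, so I would select $x\in\{1/4,3/4\}$ according to the sign of $v_2$ and $y\in\{1/4,3/4\}$ according to the sign of $v_1$, obtaining the four explicit points $(1/4,1/4),(1/4,3/4),(3/4,1/4),(3/4,3/4)$, whose $F_{1,1}$-orbits (lifted to $\R^2$) drift by exactly $(1,1),(-1,1),(1,-1),(-1,-1)$ per iterate respectively.

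Combining the two inclusions finishes the proof. There is no real obstacle here: the example is designed so that the maximal integer displacements are attained at easily-located fixed points of the torus map, and the convexity of $\rs{F}$ supplies everything in between for free.
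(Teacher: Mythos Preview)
Your proposal is correct and follows essentially the same approach as the paper: bound the displacement componentwise to get $\rs{F_{1,1}}\subseteq[-1,1]^2$, then exhibit the four fixed points $(1/4,1/4),(1/4,3/4),(3/4,1/4),(3/4,3/4)$ realising the vertices $(\pm1,\pm1)$ and invoke convexity (Theorem~\ref{Thm_convexity}) for the reverse inclusion. Your correspondence between fixed points and rotation vectors is in fact more carefully stated than the paper's, which lists the same four points but with two of the associated vectors transposed.
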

\begin{proof} This follows directly from two elementary observations.  First, we have the general estimate
$\rs{F_{\alpha,\beta}}\ssq [-\alpha,\alpha]\times [-\beta,\beta]$, as $\alpha$
  and $\beta$ are the maximal step sizes in the horizontal and vertical
  direction. Hence, $\rs{F_{1,1}} \ssq [-1,1]^2$.

Conversely, it is easily checked that the rotation vectors $(1,1),(-1,1),(1,-1)$
and $(-1,-1)$ are realised by the fixed points $(\nicefrac{1}{4},\nicefrac{1}{4}),(\nicefrac{3}{4},\nicefrac{1}{4}),(\nicefrac{1}{4},\nicefrac{3}{4})$
and $(\nicefrac{3}{4},\nicefrac{3}{4})$. By convexity, this means that
$[-1,1]^2\ssq\rs{F_{1,1}}$. 
\end{proof}

\noindent
For the initialisation of the algorithm in Section~\ref{algorithm} , we choose
$\cB_0$ to be the standard covering of $[0,1]^2$ by $k^2$ squares of side length
$\nicefrac{1}{k}$, $k \in \mathbb{N}$. Note that we can thus choose $\eps=\nicefrac{\sqrt{2}}{k}$ in
(\ref{E_def_covI2}). We fix a Lipschitz constant $L$ of $F_{\alpha,\beta}$ (for
example, $L=1+4\pi^2$ works for all $(\alpha,\beta)\in[0,1]^2$) and set $R=\eps$
in (\ref{e.box_images}). Moreover, for each $B\in\cB_0$ we choose $\Gamma_B$ as
a standard grid of $m^2$ points in $B$, so that $\Gamma_B$ is
$\nicefrac{\sqrt{2}}{(k(m-1))}$-dense in $B$ (see Fig. \ref{f.box_covering}). Thereby, we
choose $m=m(k)$ such that $\eta=\nicefrac{\sqrt{2}}{(k(m-1))}< \nicefrac{\eps}{L}$. 

\bigskip
\noindent
In order to keep the dependence on $k$ explicit, we will from now on write
$Q_{k,n}^*$, instead of $Q_n^*$, for the approximation defined in
(\ref{e.Qnstar}). Then the assumptions of Theorem~\ref{Thm_main_alg_err} with
$\eps=\nicefrac{\sqrt{2}}{k}$ are satisfied and we obtain that $\lim_{k,n\to\infty}
Q_{k,n}^*=\rs{F_{\alpha,\beta}}$, with error bound provided by
(\ref{E_main_err}) (and by Theorem~\ref{t.shadowing_error} if $F_{\alpha,\beta}$
has the shadowing property).  Figure~\ref{f.iterates_for_F11_1} shows
$Q_{k,n}^*$ for $F_{1,1}$ for $k=8$ and different values of $n$.

\begin{figure}[!hb]
\rule{0pt}{10pt}
\centering
\labelsize
\setlength\figureheight{5.75cm} 
\setlength\figurewidth{5.75cm}
\setlength{\ticklength}{1mm}
\begin{center}
  \begin{minipage}[t]{0.45\textwidth}
    \vspace{0pt} 
    \centering
\begin{tikzpicture} 
      \begin{scope}[scale=1]
        \begin{axis}[
          width=\figurewidth, height=\figureheight,
          enlargelimits=false,
          axis on top,
          xtick={-1,0,1,2}, ytick={-1,0,1,2}, 
          xmin=-1.2, xmax=2.2, ymin=-1.2, ymax=2.2,          
          x axis line style = {-}, y axis line style = {-},
          major tick length = \ticklength
          ]
          \addplot graphics [xmin=-1.2,xmax=2.2,ymin=-1.2,ymax=2.2]
                   {./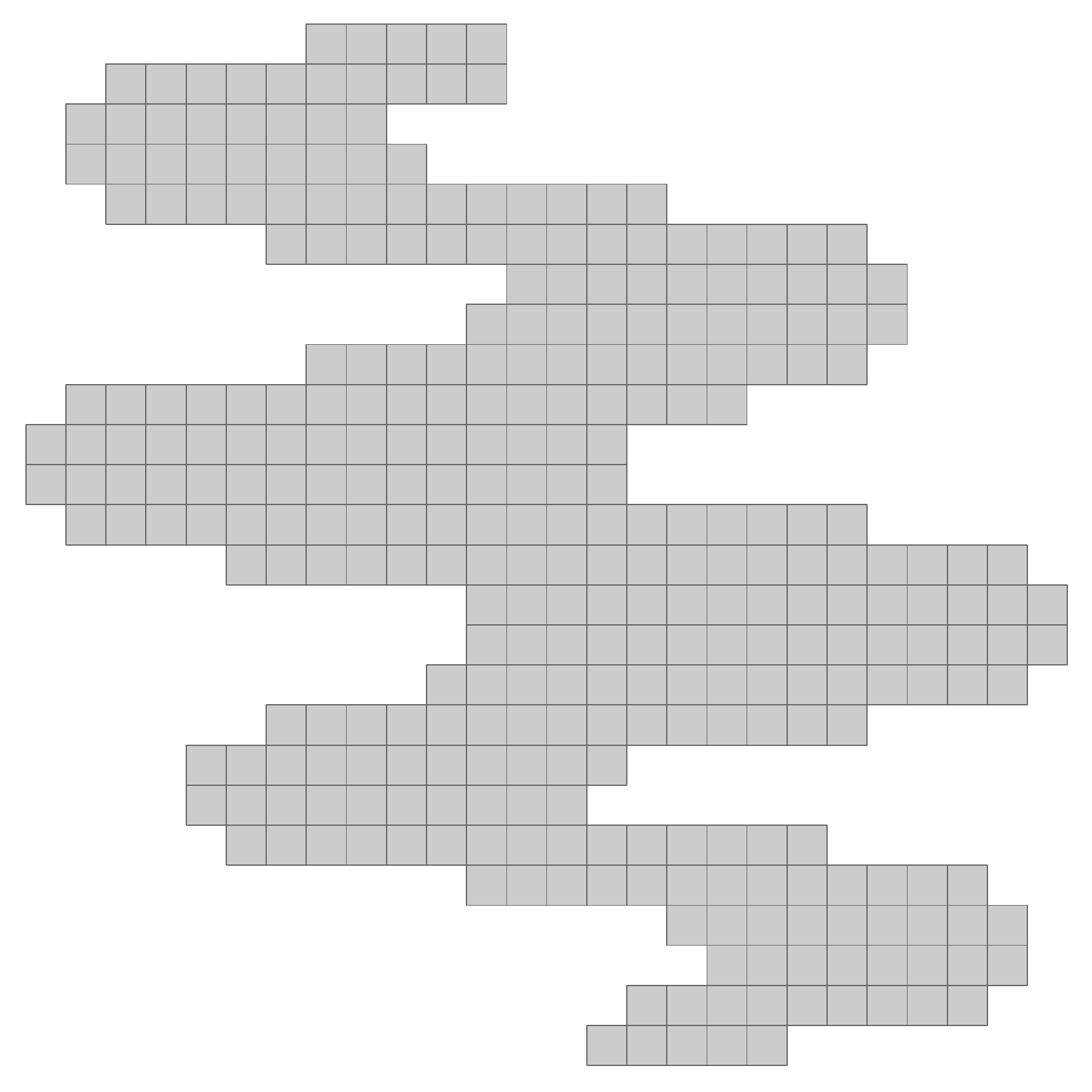};
        \end{axis}
      \end{scope}    
\end{tikzpicture}
  \end{minipage}
  \begin{minipage}[t]{0.45\textwidth}
    \vspace{0pt} 
    \centering
\begin{tikzpicture} 
      \begin{scope}[scale=1]
        \begin{axis}[
          width=\figurewidth, height=\figureheight,
          enlargelimits=false,
          axis on top,
          xtick={-1,0,1,2}, ytick={-1,0,1,2}, 
          xmin=-1.2, xmax=2.2, ymin=-1.2, ymax=2.2,          
          x axis line style = {-}, y axis line style = {-},
          major tick length = \ticklength
          ]
          \addplot graphics [xmin=-1.2,xmax=1.7,ymin=-1.2,ymax=1.7]
                   {./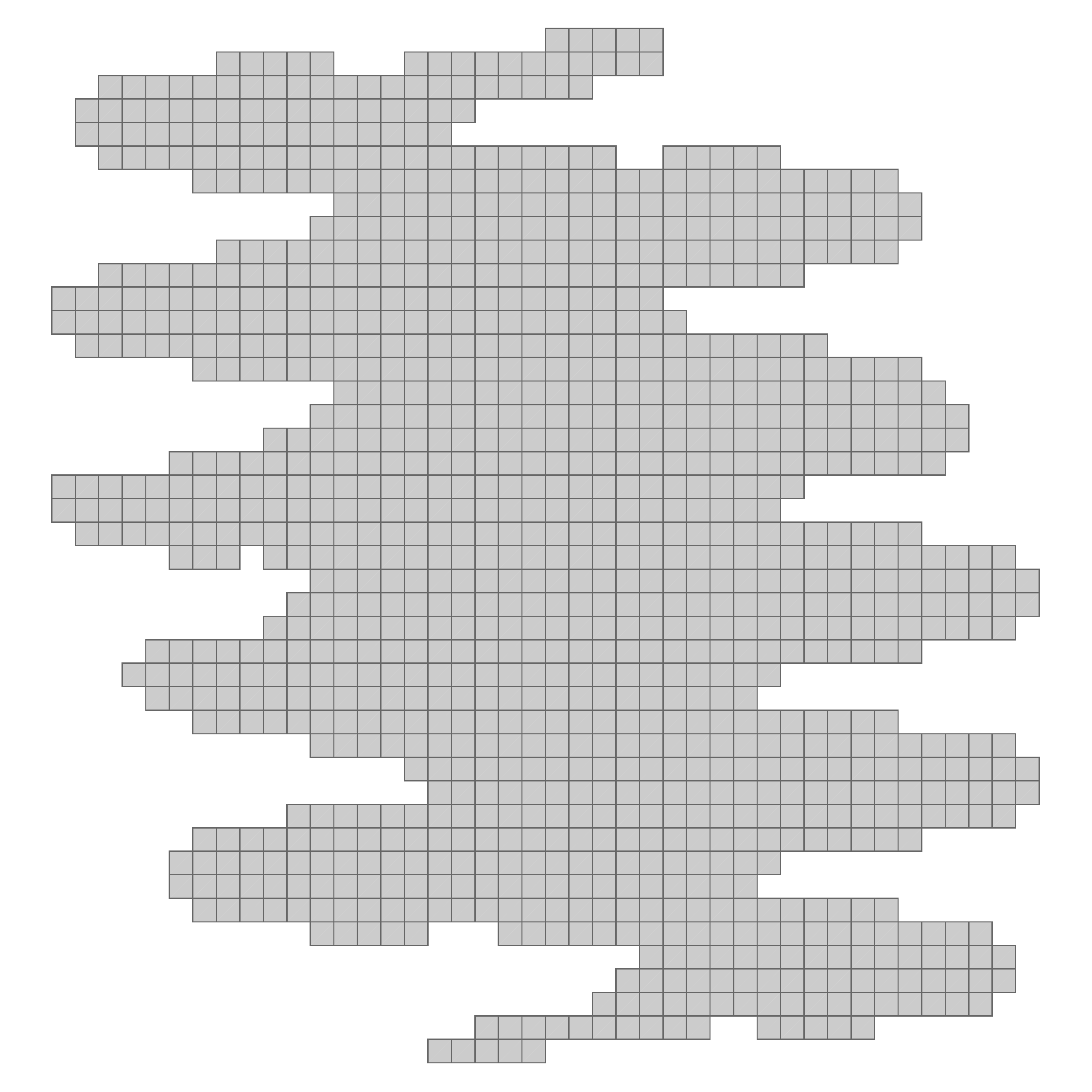};
        \end{axis}
      \end{scope}    
\end{tikzpicture}
  \end{minipage}\vspace{2mm}

 \begin{minipage}[t]{0.45\textwidth}
    \vspace{0pt} 
    \centering
\begin{tikzpicture} 
      \begin{scope}[scale=1]
        \begin{axis}[
          width=\figurewidth, height=\figureheight,
          enlargelimits=false,
          axis on top,
          xtick={-1,0,1}, ytick={-1,0,1}, 
          x axis line style = {-}, y axis line style = {-},
          major tick length = \ticklength
          ]
          \addplot graphics  [xmin=-1.2,xmax=1.7,ymin=-1.2,ymax=1.7]
                   {./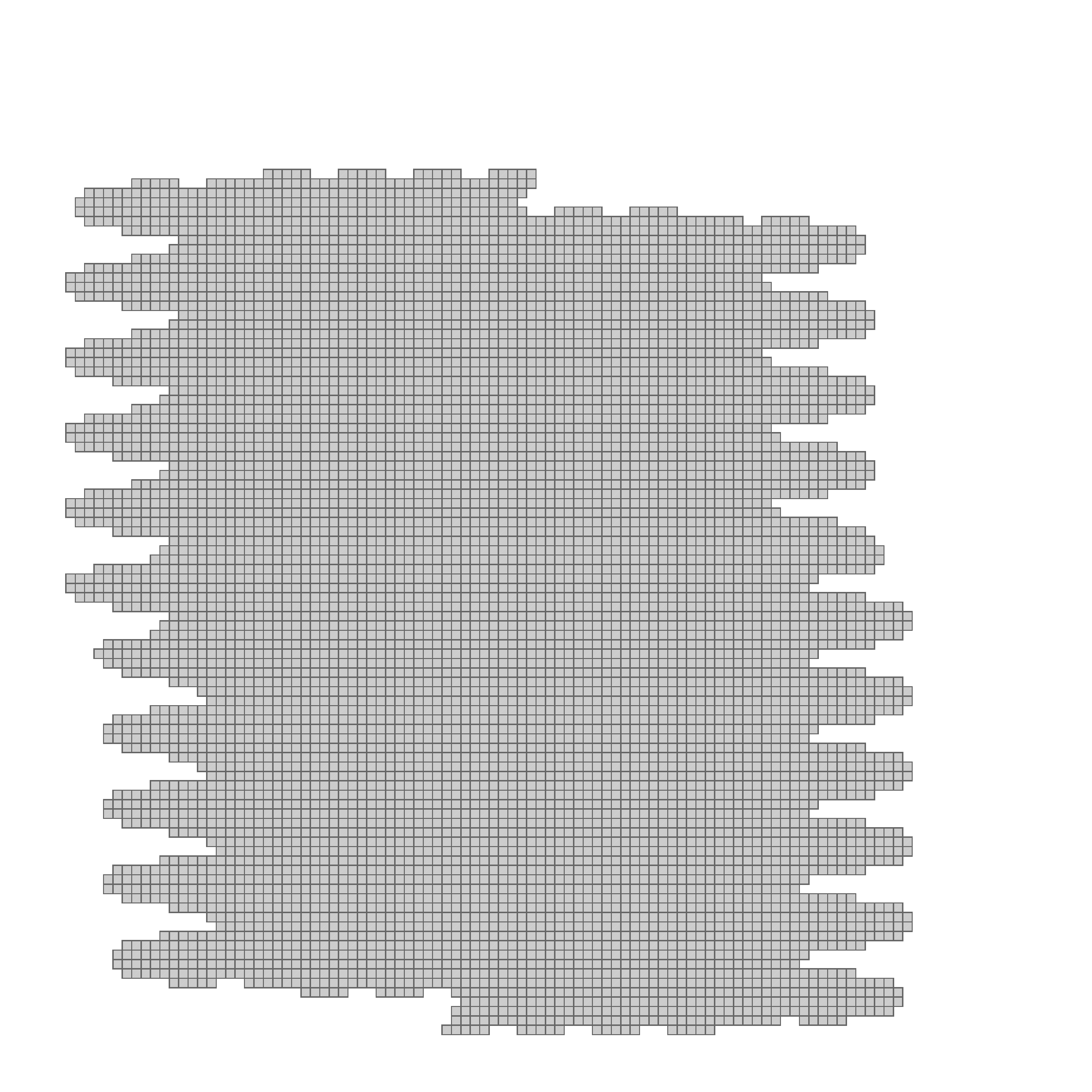};
        \end{axis}
      \end{scope}    
\end{tikzpicture}
  \end{minipage}
  \begin{minipage}[t]{0.45\textwidth}
    \vspace{0pt} 
    \centering
\begin{tikzpicture} 
      \begin{scope}[scale=1]
        \begin{axis}[
          width=\figurewidth, height=\figureheight,
          enlargelimits=false,
          axis on top,
          xtick={-1,0,1,2}, ytick={-1,0,1,2}, 
          xmin=-1.2, xmax=2.2, ymin=-1.2, ymax=2.2,          
          x axis line style = {-}, y axis line style = {-},
          major tick length = \ticklength
          ]
          \addplot graphics[xmin=-1.2,xmax=1.7,ymin=-1.2,ymax=1.7]
                   {./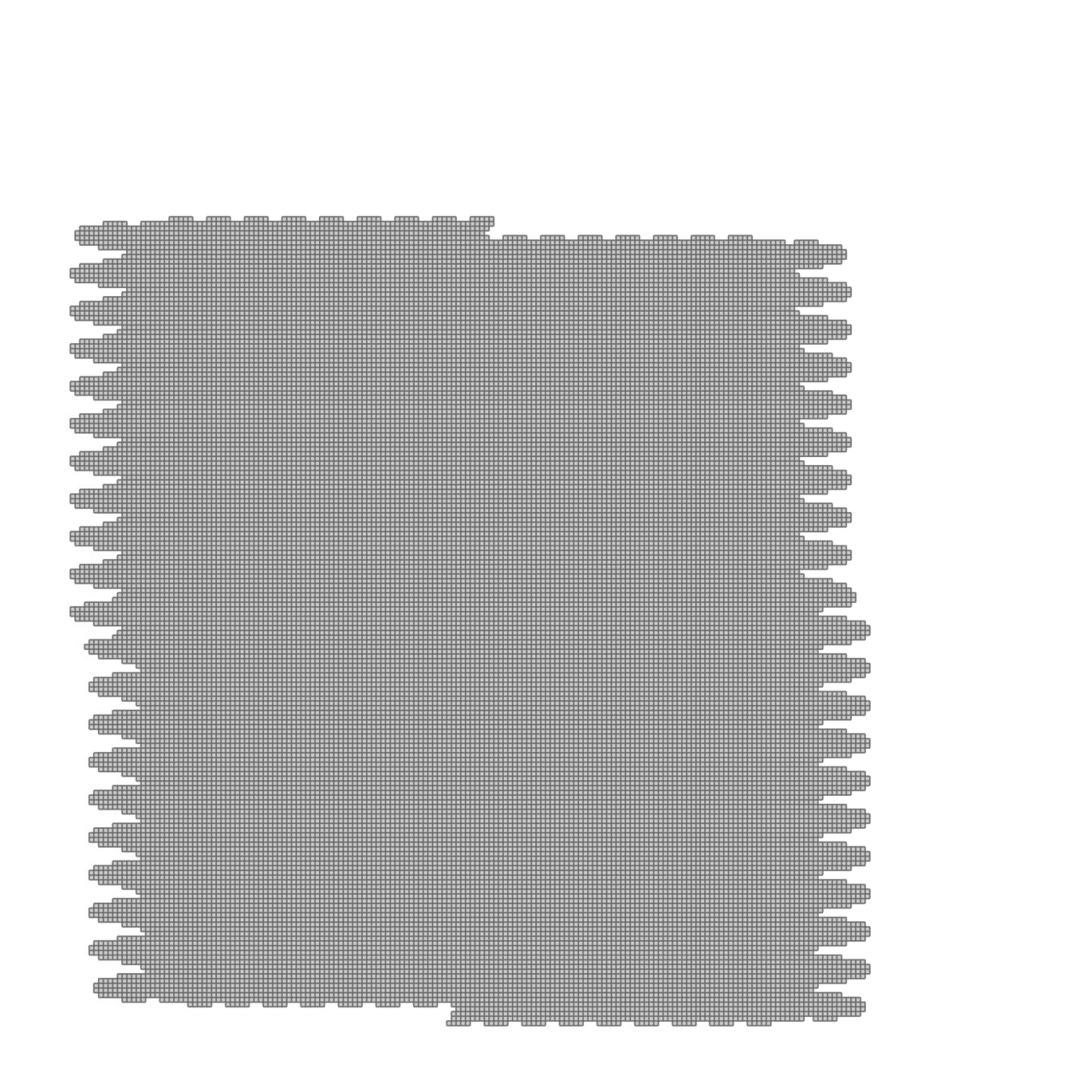};
        \end{axis}
      \end{scope}    
\end{tikzpicture}
  \end{minipage}
   \vspace{2mm}

  \begin{minipage}[t]{0.45\textwidth}
    \vspace{0pt} 
    \centering
\begin{tikzpicture} 
      \begin{scope}[scale=1]
        \begin{axis}[
          width=\figurewidth, height=\figureheight,
          enlargelimits=false,
          axis on top,
          xtick={-1,0,1,2}, ytick={-1,0,1,2}, 
          xmin=-1.2, xmax=2.2, ymin=-1.2, ymax=2.2,          
          x axis line style = {-}, y axis line style = {-},
          major tick length = \ticklength
          ]
          \addplot graphics [xmin=-1.2,xmax=1.7,ymin=-1.2,ymax=1.7]
                   {./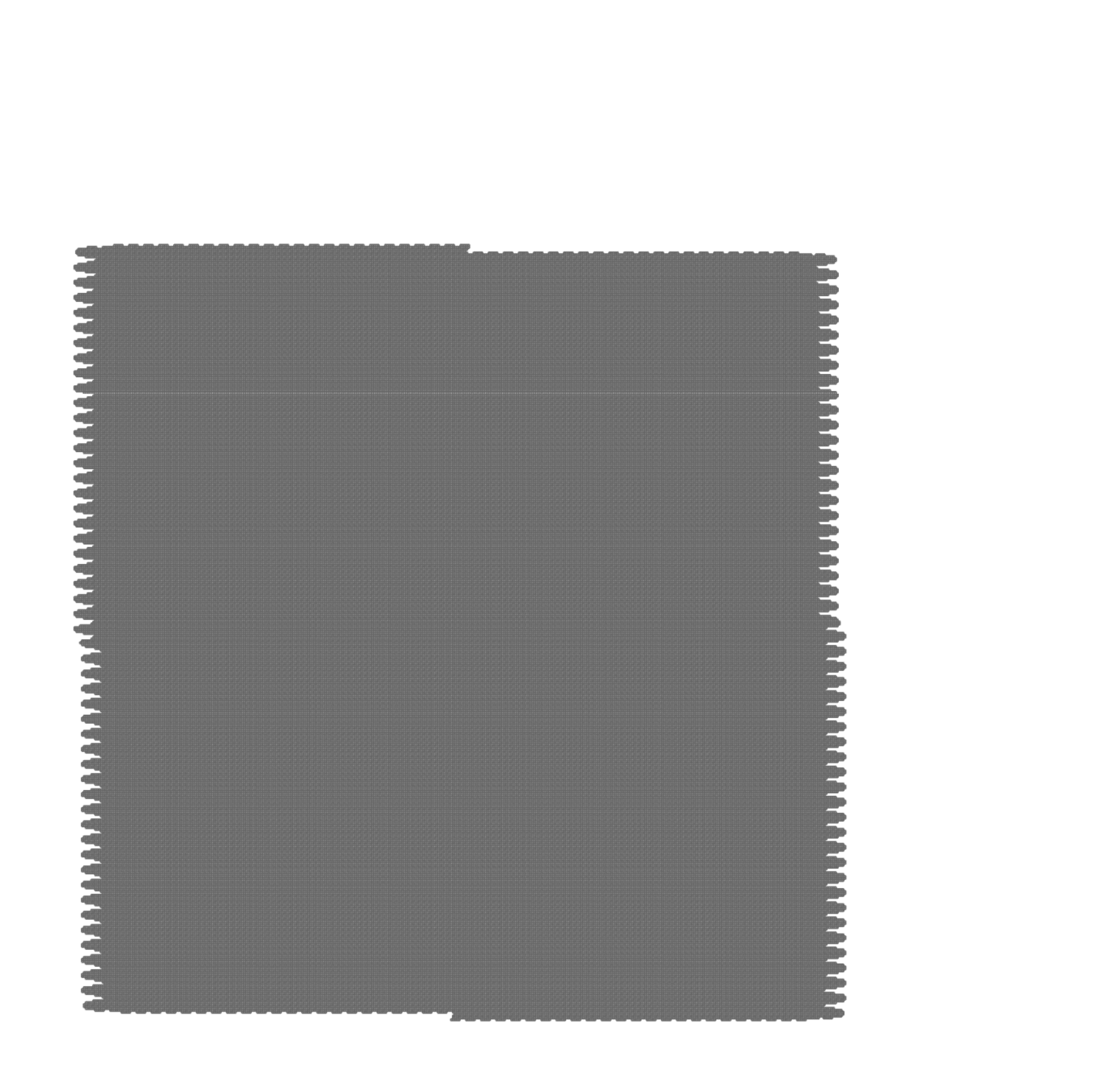};
        \end{axis}
      \end{scope}    
\end{tikzpicture}
  \end{minipage}
  \begin{minipage}[t]{0.45\textwidth}
    \vspace{0pt} 
    \centering
\begin{tikzpicture} 
      \begin{scope}[scale=1]
        \begin{axis}[
          width=\figurewidth, height=\figureheight,
          enlargelimits=false,
          axis on top,
          xtick={-1,0,1,2}, ytick={-1,0,1,2}, 
          xmin=-1.2, xmax=2.2, ymin=-1.2, ymax=2.2,          
          x axis line style = {-}, y axis line style = {-},
          major tick length = \ticklength
          ]
          \addplot graphics [xmin=-1.2,xmax=1.7,ymin=-1.2,ymax=1.7]
                   {./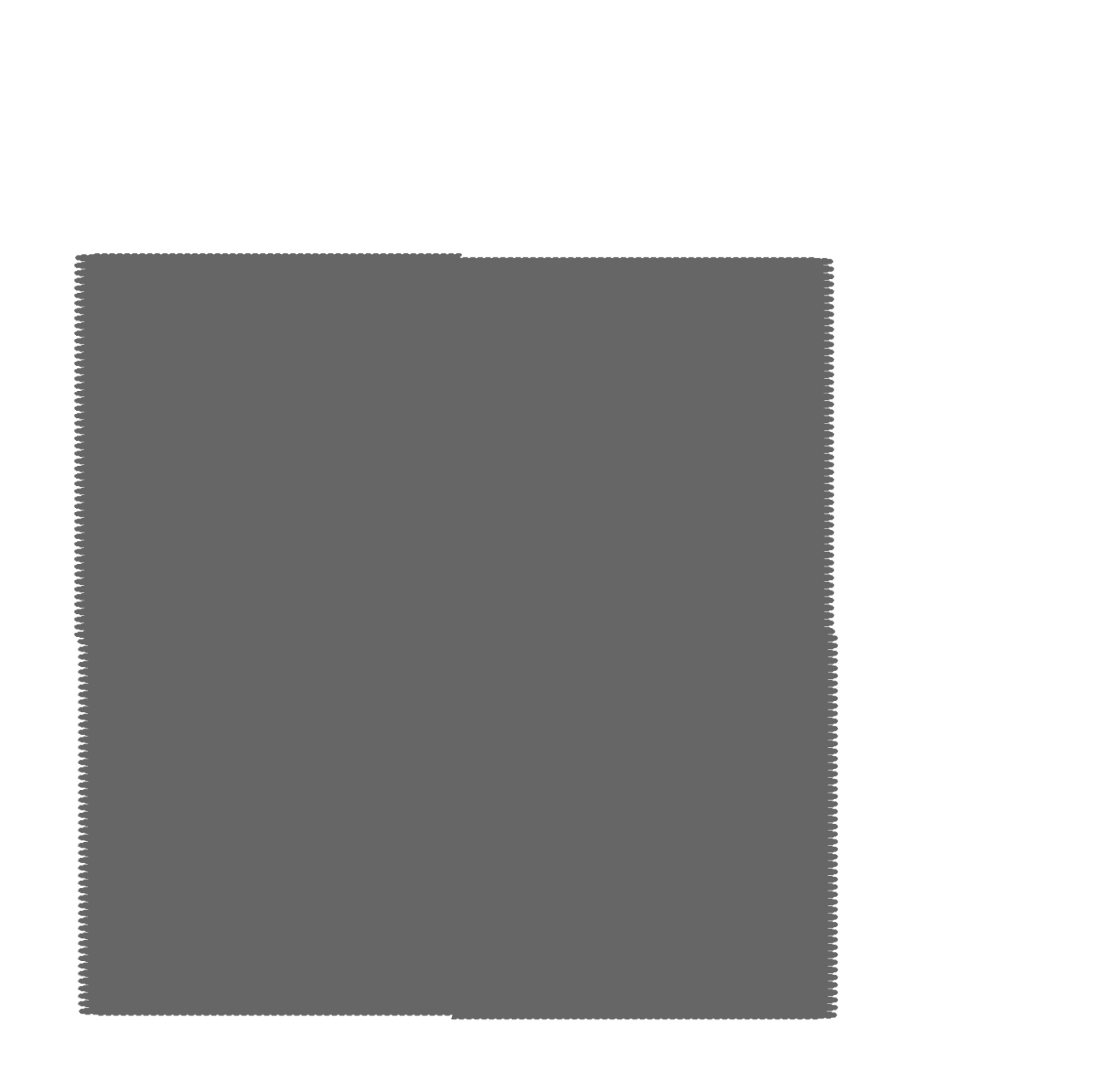};
        \end{axis}
      \end{scope}    
\end{tikzpicture}
  \end{minipage}\vspace{2mm}

  \begin{minipage}[t]{0.45\textwidth}
    \vspace{0pt} 
    \centering
\begin{tikzpicture} 
      \begin{scope}[scale=1]
        \begin{axis}[
            width=\figurewidth, height=\figureheight,
          enlargelimits=false,
          axis on top,
          xtick={-1,0,1,2}, ytick={-1,0,1,2}, 
          xmin=-1.2, xmax=2.2, ymin=-1.2, ymax=2.2,          
          x axis line style = {-}, y axis line style = {-},
          major tick length = \ticklength
          ]
          \addplot graphics [xmin=-1.2,xmax=1.7,ymin=-1.2,ymax=1.7]
                   {./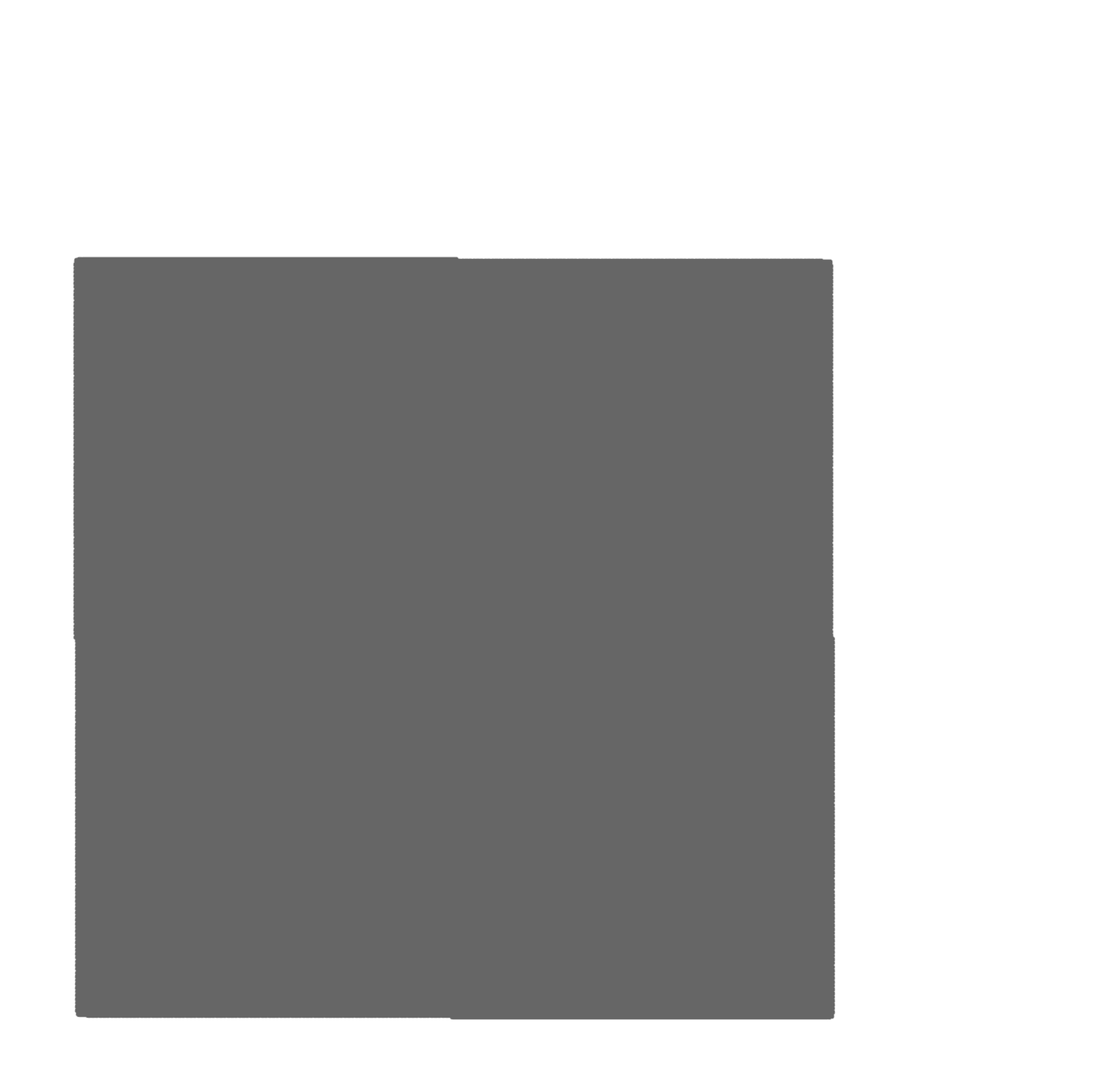};
        \end{axis}
      \end{scope}    
\end{tikzpicture}
  \end{minipage}
  \begin{minipage}[t]{0.45\textwidth}
    \vspace{0pt} 
    \centering
\begin{tikzpicture} 
      \begin{scope}[scale=1]
        \begin{axis}[
          width=\figurewidth, height=\figureheight,
          enlargelimits=false,
          axis on top,
          xtick={-1,0,1,2}, ytick={-1,0,1,2}, 
          xmin=-1.2, xmax=2.2, ymin=-1.2, ymax=2.2,          
          x axis line style = {-}, y axis line style = {-},
          major tick length = \ticklength
          ]
          \addplot graphics [xmin=-1.2,xmax=1.7,ymin=-1.2,ymax=1.7]
                   {./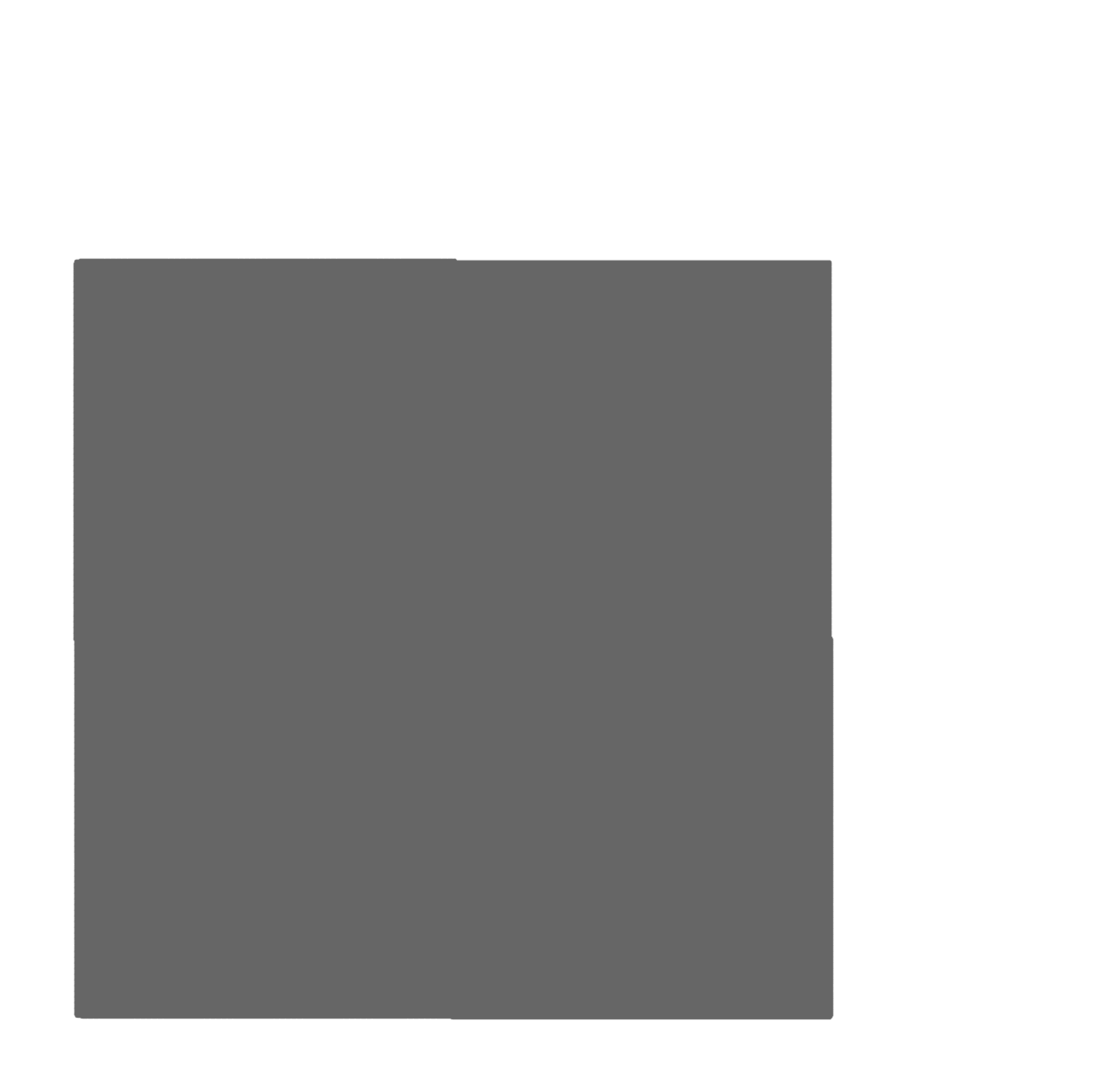};
        \end{axis}
      \end{scope}    
\end{tikzpicture}
  \end{minipage}
  \caption{Approximations $Q^*_{k,n}$ for the rotation set of the map $F_{1,1}$ with $k=8$ and $n = 1,2,5,10,25,50,100,200$ (from top left to bottom right).}
    \label{f.iterates_for_F11_1}
\end{center}
 \end{figure}

\bigskip
\noindent 
Zooming in (Fig. \ref{f.F11_zoom_ul}) on the boundary of $Q_{8,100}^*$ and
$Q_{8,200}^*$ of Figure~\ref{f.iterates_for_F11_1} reveals the difference that
exists between these approximations and $\rs{F_{1,1}}=[0,1]^2$, which is of a
magnitude of $10^{-2}$ (and hence significantly smaller than the theoretical
error bound in (\ref{E_main_err})).  By Lemma \ref{Prop_mainalg_1}, the
$\nicefrac{2\sqrt{2}}{n}$-neighbourhood of $Q_{k,n}^\ast$ covers the rotation~set.

 
\begin{figure}[!ht]
\rule{0pt}{10pt}
   \labelsize
   \setlength\figureheight{6.5cm} 
   \setlength{\ticklength}{1.5mm}  
   \begin{center}
   \begin{minipage}[t]{0.49\textwidth}
      \centering \vspace{0pt} 
      \begin{tikzpicture} 
         \begin{axis}[
            axis equal image,        
            height=\figureheight,
            enlargelimits=false,
            axis on top,
            major tick length = \ticklength,
            every x tick/.style={black},
            every y tick/.style={black},
            ytick={1.04,1,0.96,0.92,0.88}, xtick={-1.04,-1,-0.96,-0.92,-0.88}, 
            x axis line style = {-},  y axis line style = {-}
            ]
            \addplot graphics [xmin=-1.06,xmax=-0.85,ymin=0.85,ymax=1.06]
                         {./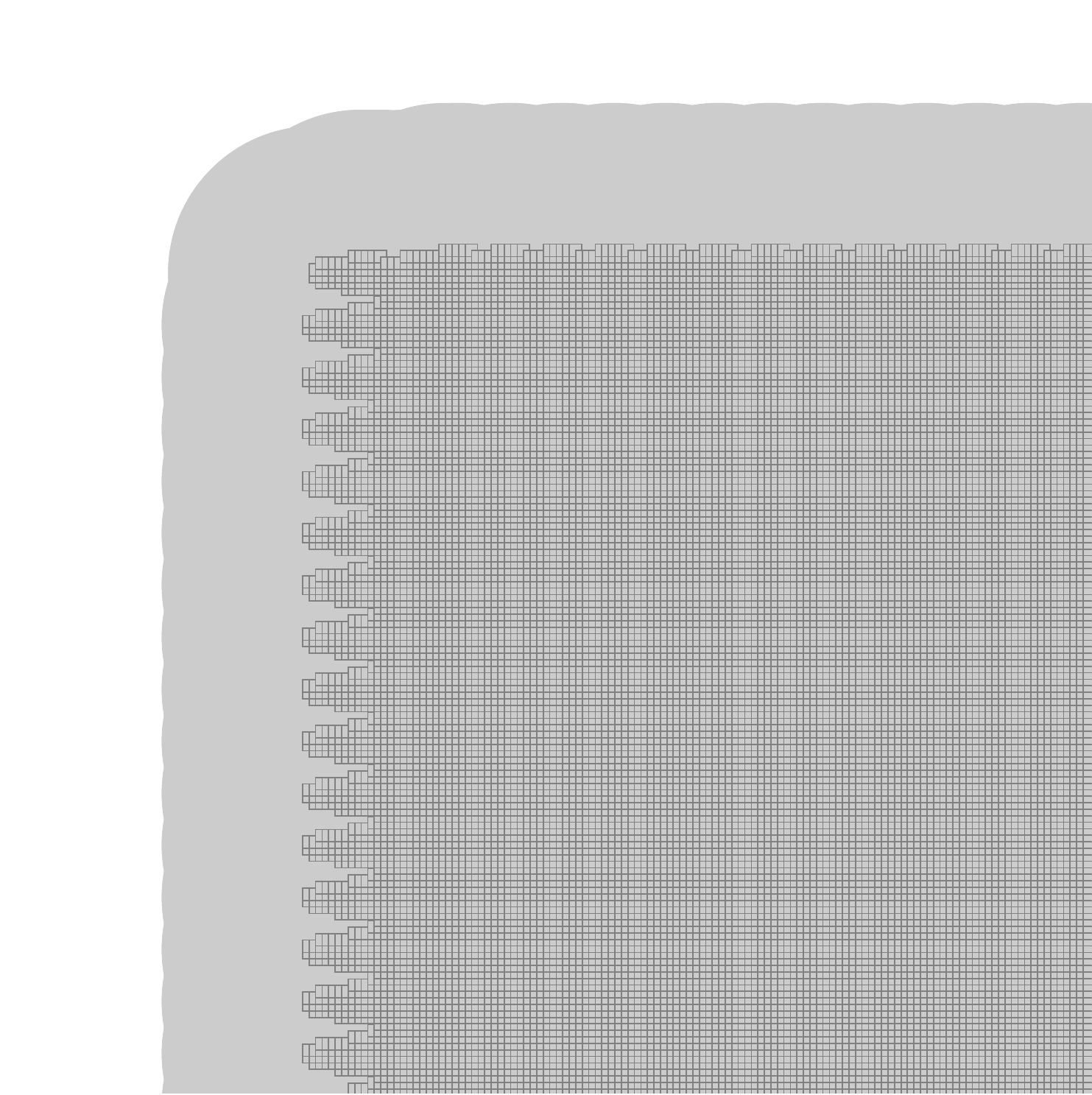};
            \draw[densely dashed, color=black] 
                     (axis cs:-1,0.85) -- (axis cs:-1,1) -- (axis cs: -0.85,1); 
         \end{axis}   
      \end{tikzpicture}
   \end{minipage}
   \begin{minipage}[t]{0.49\textwidth}
      \centering \vspace{0pt} 
      \begin{tikzpicture} 
        \begin{axis}[ axis equal image, height=\figureheight,
          enlargelimits=false, axis on top, major tick length = \ticklength,
          every x tick/.style={black}, every y tick/.style={black}, xticklabel
          style={/pgf/number format/precision=4}, yticklabel style={/pgf/number
            format/precision=4}, ytick={1.04,1,0.96,0.92,0.88},
          xtick={-1.04,-1,-0.96,-0.92,-0.88}, x axis line style = {-}, y axis
          line style = {-} ]
            \addplot graphics [xmin=-1.06,xmax=-0.85,ymin=0.85,ymax=1.06]
                         {./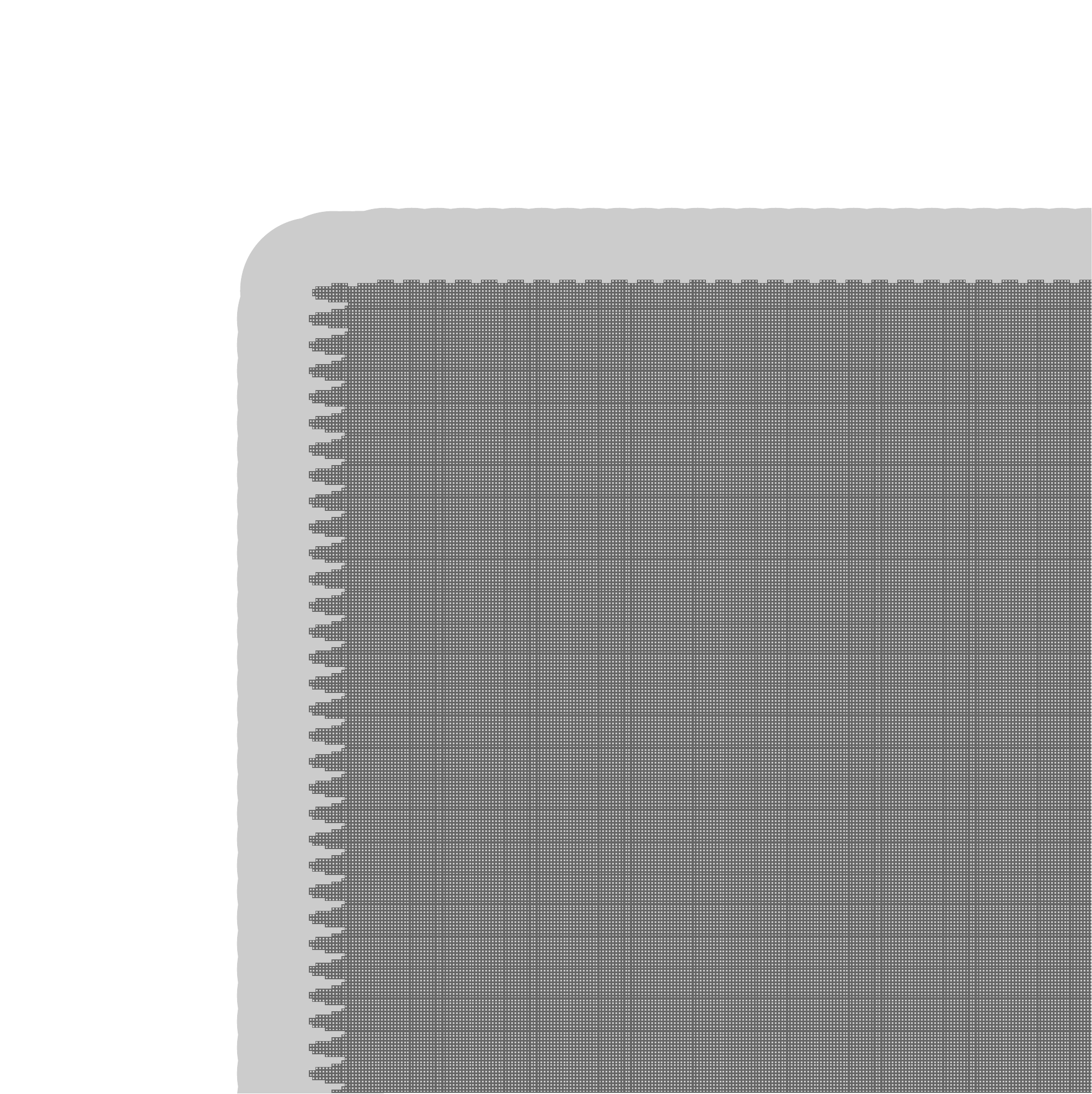};
            \draw[densely dashed, color=black] 
                     (axis cs:-1,0.85) -- (axis cs:-1,1) -- (axis cs: -0.85,1);
         \end{axis}   
      \end{tikzpicture} 
   \end{minipage}
   \end{center}
   \caption{Zoom on top left area of the approximations $Q_{8,100}^\ast$ (left) and
     $Q_{8,200}^\ast$ (right) for the rotation set of the map $F_{1,1}$.}
  \label{f.F11_zoom_ul}
\end{figure}

\bigskip
\noindent 
As further examples, we consider the maps $F_{\nicefrac{1}{2},\nicefrac{1}{2}},F_{\nicefrac{3}{5},\nicefrac{3}{5}}$ and
$F_{\nicefrac{3}{4},1}$. In the first case, we still have an a priori lower bound for the
rotation set: $\rs{F_{\nicefrac{1}{2},\nicefrac{1}{2}}}$ contains the square spanned by the points $(\pm \nicefrac{1}{2},0)$ and $(0,\pm \nicefrac{1}{2})$, since these rotation vectors are realised by the
two-periodic points $(0,\nicefrac{1}{4}),(\nicefrac{1}{4},0)$,$(0,\nicefrac{3}{4})$ and $(\nicefrac{3}{4},0)$. The numerical
approximation in Figure \ref{f.further_RS} indicates that this square indeed is
the rotation set of $F_{\nicefrac{1}{2},\nicefrac{1}{2}}$ (recall here that our algorithm never
underestimates).

\begin{figure}[!ht]
   \labelsize
   \setlength\figureheight{4cm} 
   \setlength{\ticklength}{1mm} 
   \begin{center}
     \begin{tikzpicture} 
        \begin{axis}[
          axis equal image,
          height = \figureheight,
          xmin = -0.6, xmax = 0.6, ymin = -0.6, ymax = 0.6,
          axis on top,
          major tick length = \ticklength,
          xtick={-0.5,0,0.5}, ytick={-0.5,0,0.5},    
          x axis line style = {-}, y axis line style = {-}
          ]
          \addplot graphics
                    [xmin=-0.55,xmax=0.75,ymin=-0.55,ymax=0.75]
                   {./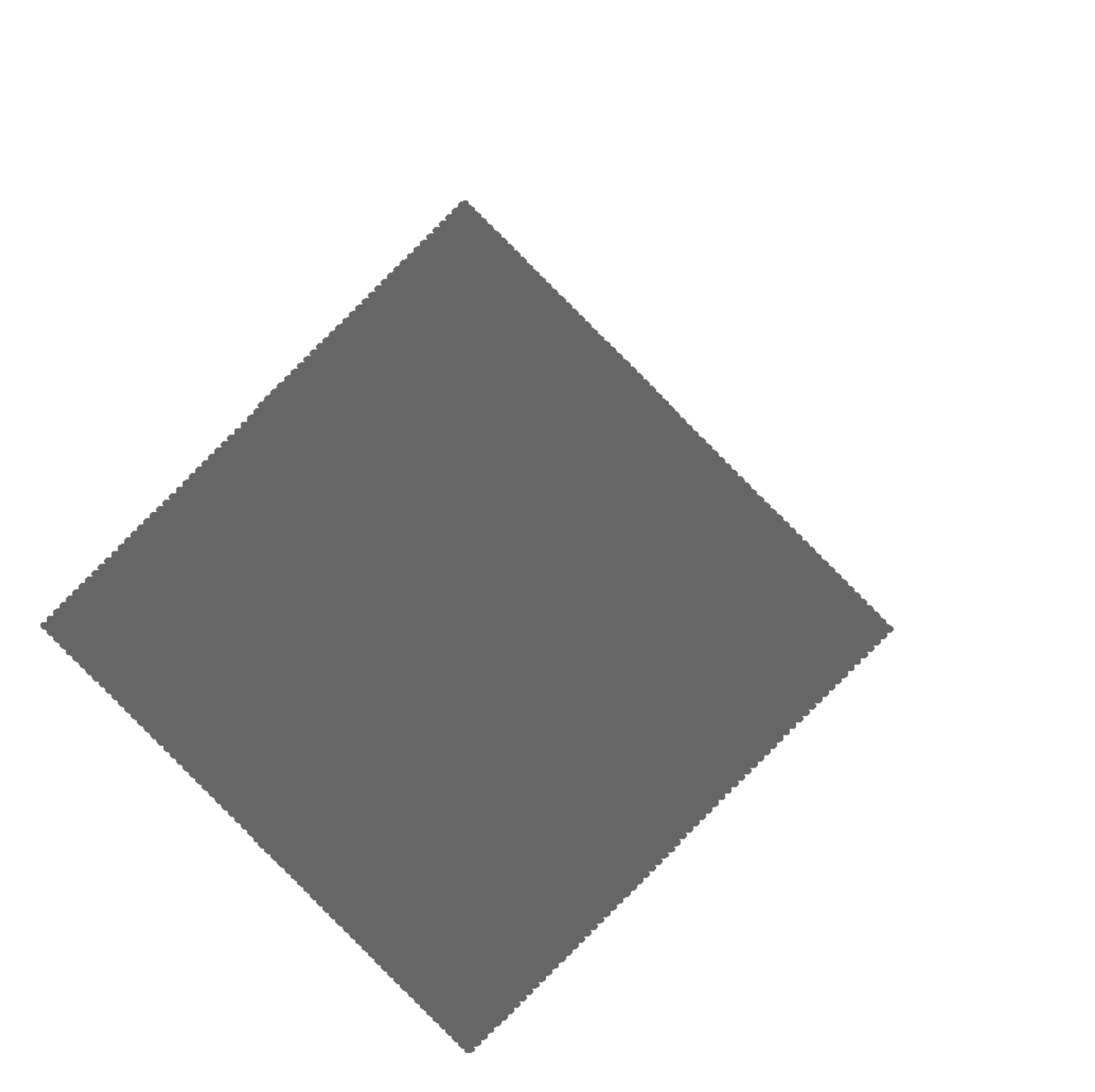};
        \end{axis}   
     \end{tikzpicture} 
     \hspace{2mm}
      \begin{tikzpicture} 
        \begin{axis}[
          axis equal image,
          height = 1.298\figureheight,
          major tick length = \ticklength,
          axis on top,
          xtick={-1,-0.5,0,0.5,1}, ytick={-0.5,0,0.5}, 
          xmin = -1.1, xmax = 1.1, ymin = -0.6, ymax = 0.6,        
          x axis line style = {-}, y axis line style = {-},
          yticklabel pos=right
          ]
          \addplot graphics
                   [xmin=-1.05,xmax=1.05,ymin=-0.3,ymax=0.3]
                   {./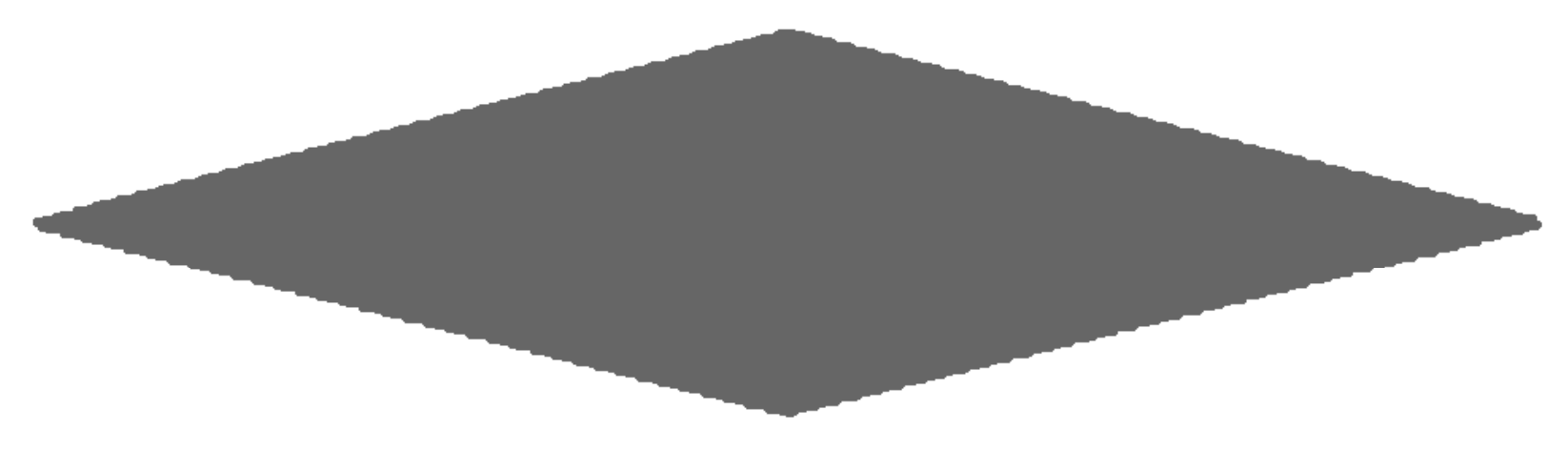};
        \end{axis}   
      \end{tikzpicture}  
      
      \smallskip
     \begin{tikzpicture} 
        \begin{axis}[
          height = 1.6\figureheight,
          axis equal image,
          xmin=-0.6,xmax=0.6,ymin=-1.2,ymax=1.2,
          axis on top,
          major tick length = \ticklength,
          xtick={-0.5,0,0.5}, ytick={-1,-0.5,0,0.5,1},          
          x axis line style = {-}, y axis line style = {-}
          ]
          \addplot graphics
                    [xmin=-0.55,xmax=.55,ymin=-0.55,ymax=0.55]
                   {./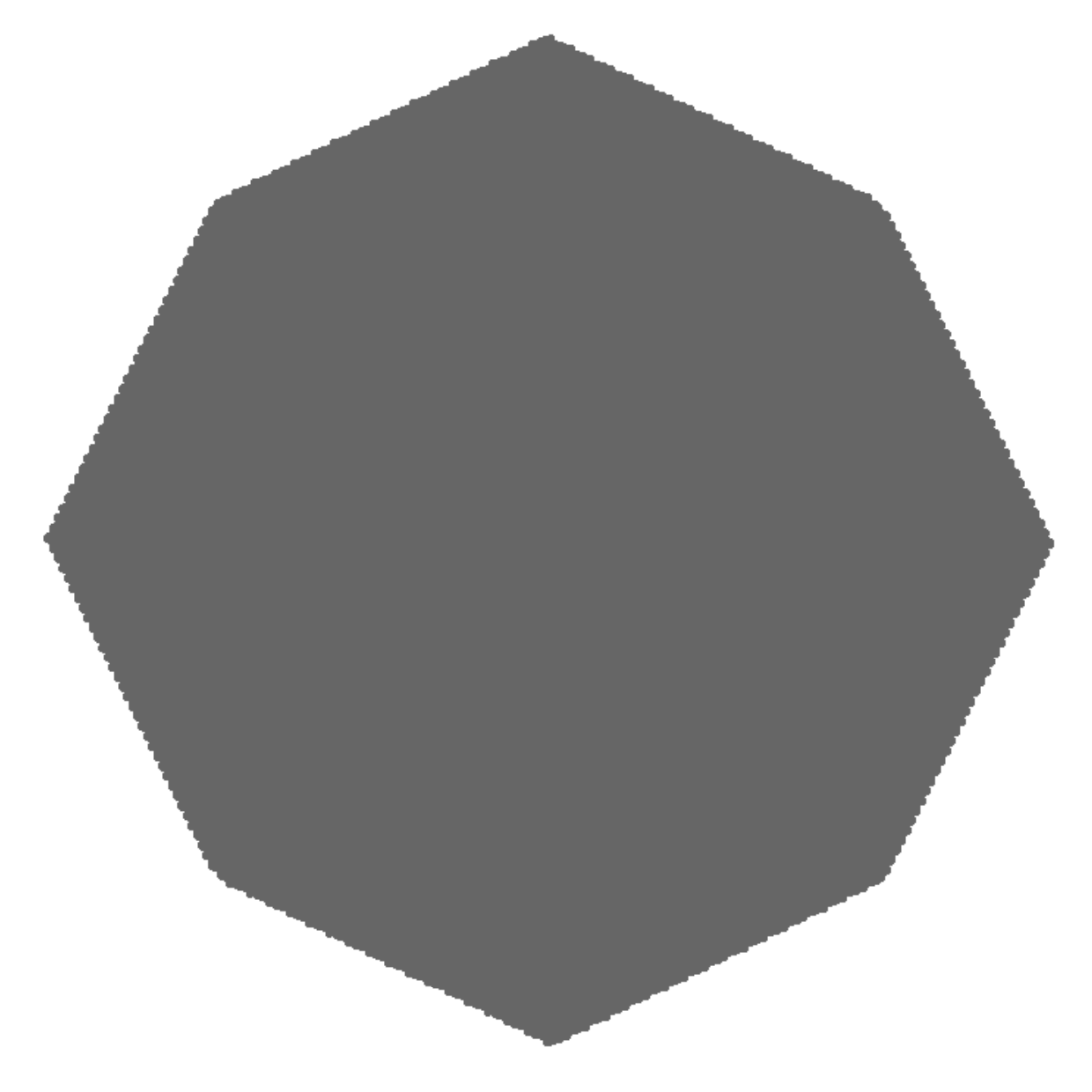};
        \end{axis}   
     \end{tikzpicture}    
     \hspace{2mm}   
     \begin{tikzpicture} 
         \begin{axis}[
          height = 1.607\figureheight,
          axis equal image,
          xmin=-1.1,xmax=1.1,ymin=-1.2,ymax=1.2,
          axis on top,
          major tick length = \ticklength,
          xtick={-1,-0.5,0,0.5,1}, ytick={-1,-0.5,0,0.5,1},      
          x axis line style = {-}, y axis line style = {-},
          yticklabel pos=right
          ]
          \addplot graphics
                   [xmin=-0.85,xmax=0.85,ymin=-1.1,ymax=1.1]
                   {./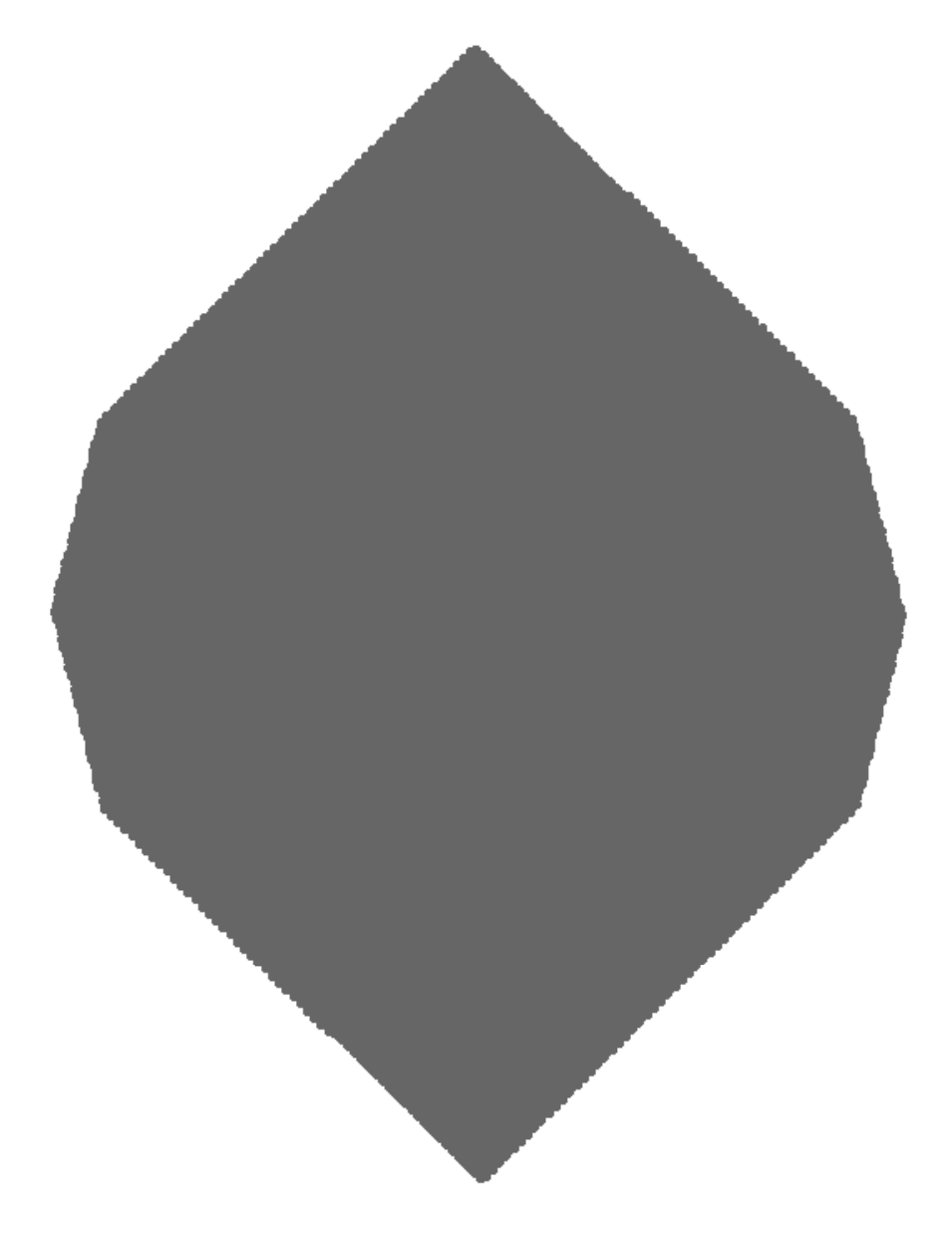};
        \end{axis}   
  \end{tikzpicture}          
   \end{center}   
   \caption{Approximations $Q_{k,n}^*$ for the rotation sets of the maps  $F_{\nicefrac{1}{2},\nicefrac{1}{2}}, F_{1,\nicefrac{1}{4}},
   F_{\nicefrac{3}{5},\nicefrac{3}{5}}$ and $F_{\nicefrac{3}{4},1}$ (with $k=50, 16, 50,45$ and $n=130, 140, 100,80$ from top left to bottom right)
  \label{f.further_RS} }
\end{figure}

\bigskip
\noindent 
In the examples $F_{1,1}$ and $F_{\nicefrac{1}{2},\nicefrac{1}{2}}$ above, the vertices of the polygonal
rotation set are realised by periodic orbits of very low period (1, respectively
2). As discussed in \cite{Guiheneuf2015RotationSetComputation}, such rotation
sets can still be accurately predicted by conventional direct approaches, but
these tend to fail if the vertices correspond to periodic points of higher
periods. For this reason, we next consider the map $G = g_3 \circ g_2 \circ g_1:
\mathbb{R}^2 \longrightarrow \mathbb{R}^2$ with
\begin{align*}
  g_1(x,y) &= \left( x,\ y + \textstyle{\frac{1}{8}} \sin (5 \cdot 2 \pi x) \right), \\
  g_2(x,y) &=\left( x + \textstyle{\frac{2}{5}} \sin (8 \cdot 2 \pi y), \ y \right)
   \text{ and } \\
  g_3(x,y) &=\left( x -  \textstyle{\frac{1}{5}},\ y + \textstyle{\frac{2}{8}} \right) \ . 
\end{align*}
The related rotation set can be determined analytically as the rectangle $\rs{G}
= [-\nicefrac{3}{5},\nicefrac{1}{5}] \times [\nicefrac{1}{8},\nicefrac{3}{8}]$ since its vertices correspond to elliptic
periodic orbits of period $40$ (those of the points
$(\nicefrac{3}{20},\nicefrac{3}{32}),(\nicefrac{1}{20},\nicefrac{3}{32}),
(\nicefrac{1}{20},\nicefrac{1}{32})$ and $(\nicefrac{3}{20},\nicefrac{1}{32})$). Figure
\ref{f.high_period} shows the approximate rotation set $Q_{60,130}^*$ for $G$.

\begin{figure}[!ht]
   \labelsize
   \setlength\figureheight{8cm} 
   \setlength{\ticklength}{1mm} 
   \begin{center}
     \begin{tikzpicture} 
        \begin{axis}[
          height = \figureheight,
          axis equal image,
          axis on top,
          xtick={-0.6,-0.4,-0.2,0,0.2}, ytick={0.125,0.250,0.375}, 
          xmin=-0.65,xmax=0.27,ymin=0.07,ymax=0.43,
          major tick length = \ticklength,
          xticklabel style={/pgf/number format/precision=4},
          yticklabel style={/pgf/number format/precision=4},             
          x axis line style = {-}, y axis line style = {-}
          ]
          \addplot graphics
                   [xmin=-0.63,xmax=0.23,ymin=0.1,ymax=0.4]
                   {./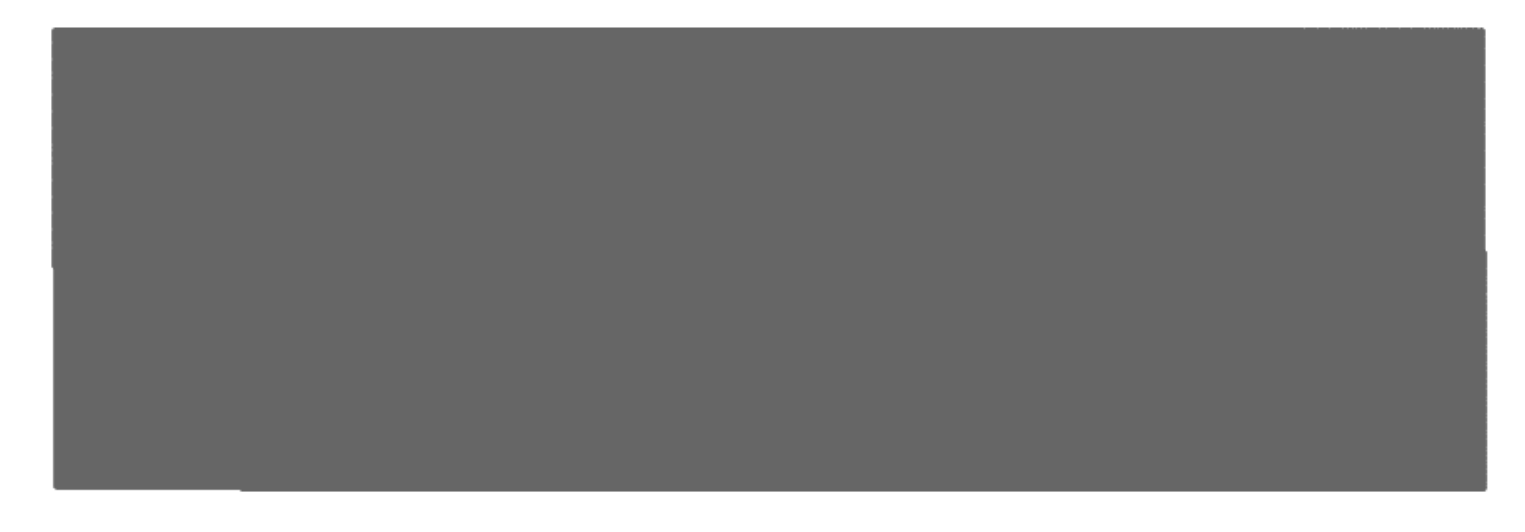};
        \end{axis}   
    \end{tikzpicture} 
  \end{center}
  \captionsetup{width=\textwidth, margin={0pt,2pt}}
  \caption{Approximation $Q_{60,130}^\ast$ for the rotation set of the map $G$.}
  \label{f.high_period}
\end{figure}

\clearpage
\noindent
Finally, we consider a slightly perturbed version $\bar{F} = R \circ F$ of the
above examples by introducing a slight additional rotation \[ R:\mathbb{R}^2 \to
\mathbb{R}^2, (x,y) \mapsto (x+r_1,y+r_2), r_1,r_2\in \mathbb{R} \ . \] In Table \ref{t.table} we collect the specific parameter values for both $k$ and $n$ and the perturbations $r_1$ and $r_2$, on which we base our approximations of the related rotation sets of the perturbed maps.

Although it is difficult to check rigorously, we expect that for small perturbations these
modifications should not alter the rotation sets of the above examples due to
the generic structural stability of the dynamics
\cite{Passeggi2013RationalRotationSets}. Moreover, we expect that the vertices
of the rotation sets are still realised by periodic orbits that lie close to the
original ones. This fact could in principle be checked by a qualitative index
argument. However, we refrain from going into detail and content ourselves with
the numerical confirmation of the stability provided by Figure
\ref{f.further_RS_pert}.

\bigskip
\begin{table}[!ht]
\centering
\begin{tabular}{|c|c|c|c|c|c|c|}
\hline
         & $\bar{F}_{\frac{1}{2},\frac{1}{2}}$ 
         &  $\bar{G}$ 
         &  $\bar{F}_{1,\frac{1}{4}}$ 
         &  $\bar{F}_{\frac{3}{5},\frac{3}{5}}$ 
         & $\bar{F}_{\frac{3}{4},1}$ 
         & $\bar{F}_{1,1}$ \rule{0pt}{13pt} \\  
          \hline
 $k$    &$50$      &$60$     &$16$       &$50$    &$45$      &$8$ \\
 $n$    & $130$   &$130$   &$140$    &$100$   &$80$      &$100$ \\
 $r_1$ & $0.012$&$0.008$& $0.012$&$0.01$  &$0.002$ &$0.022$ \\
 $r_2$ & $0.014$&$0.001$&$0.002$ &$0.011$&$0.013$ &$0.015$ \\
 \hline
\end{tabular}

\rule{0pt}{7pt}
\caption{Parameter values for the approximations shown in Figure \ref{f.further_RS_pert}.}
\label{t.table}
\end{table}


\begin{figure}[!ht]
   \labelsize
   \setlength\figureheight{3.8cm} 
   \setlength{\ticklength}{1mm} 
   \begin{center}

 \setlength{\tabcolsep}{0.5mm}
  \begin{tabular}{ r r r }
     \begin{tikzpicture} 
        \begin{axis}[
          axis equal image,
          height = \figureheight,
          xmin = -0.6, xmax = 0.6, ymin = -0.6, ymax = 0.6,
          axis on top,
          major tick length = \ticklength,
          xtick={-0.5,0,0.5}, ytick={-0.5,0,0.5},    
          x axis line style = {-}, y axis line style = {-}
          ]
          \addplot graphics
                    [xmin=-0.55,xmax=0.55,ymin=-0.55,ymax=0.55]
                   {./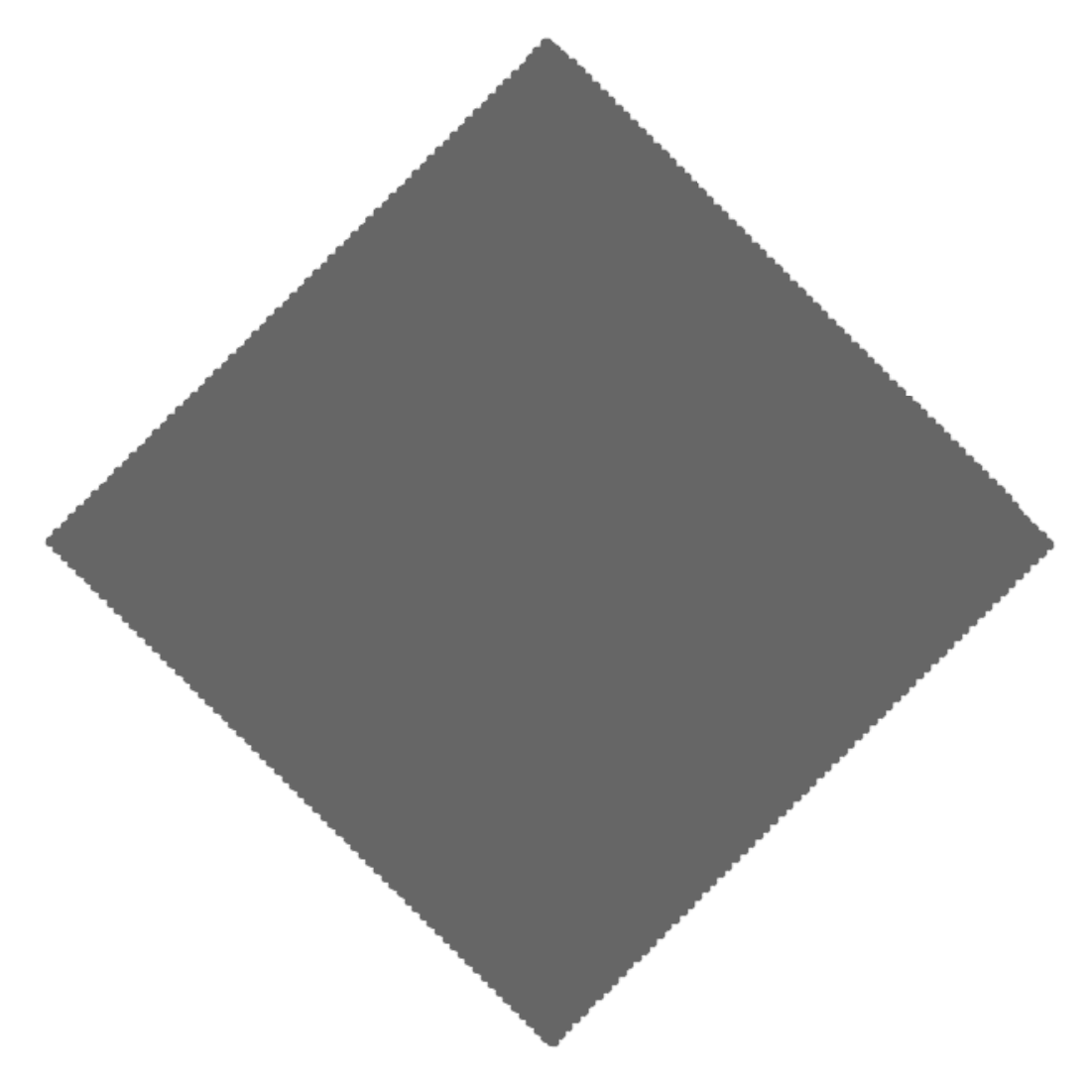};
        \end{axis}   
     \end{tikzpicture} 
           &
      \begin{tikzpicture} 
        \begin{axis}[
          axis equal image,
          height = 1.125\figureheight,
          major tick length = \ticklength,
          axis on top,
          xtick={-0.6,0,0.6}, ytick={-0.5,0,0.5}, 
          xmin = -0.9, xmax = 0.9, ymin = -0.6, ymax = 0.6, 
          x axis line style = {-}, y axis line style = {-}
          ]
          \addplot graphics
                   [xmin=-0.63,xmax=0.23,ymin=0.1,ymax=0.4]
                   {./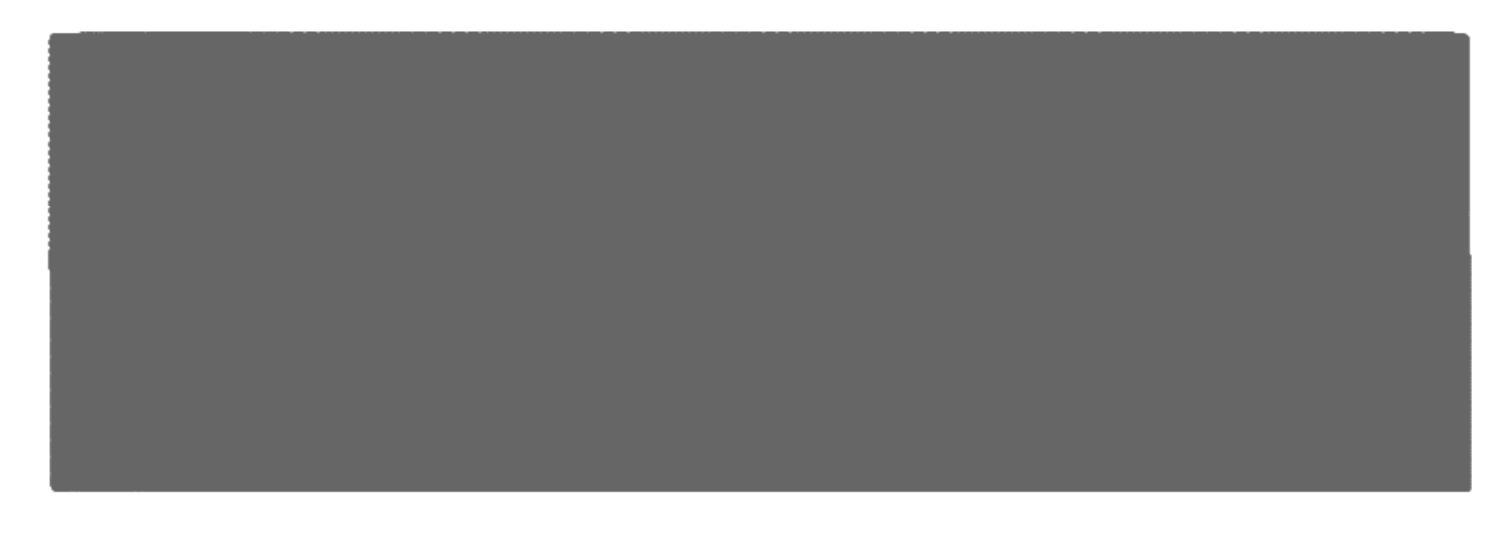};
        \end{axis}   
      \end{tikzpicture}
&
      \begin{tikzpicture} 
        \begin{axis}[
          axis equal image,
          height = 1.298\figureheight,
          major tick length = \ticklength,
          axis on top,
          xtick={-1,-0.5,0,0.5,1}, ytick={-0.5,0,0.5}, 
          xmin = -1.1, xmax = 1.1, ymin = -0.6, ymax = 0.6,        
          x axis line style = {-}, y axis line style = {-},
          ]
          \addplot graphics
                   [xmin=-1.05,xmax=1.05,ymin=-0.3,ymax=0.3]
                   {./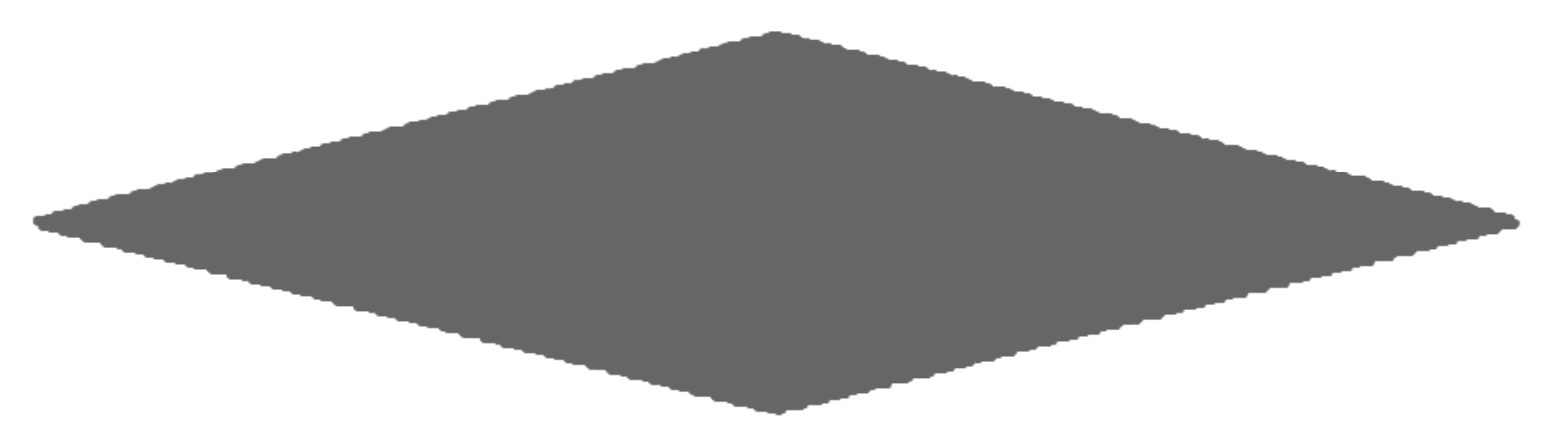};
        \end{axis}   
      \end{tikzpicture}    
      \\
     \begin{tikzpicture} 
        \begin{axis}[
          height = 1.58\figureheight,
          axis equal image,
          xmin=-0.6,xmax=0.6,ymin=-1.2,ymax=1.2,
          axis on top,
          major tick length = \ticklength,
          xtick={-0.5,0,0.5}, ytick={-1,-0.5,0,0.5,1},          
          x axis line style = {-}, y axis line style = {-}
          ]
          \addplot graphics
                    [xmin=-0.55,xmax=.55,ymin=-0.55,ymax=0.55]
                   {./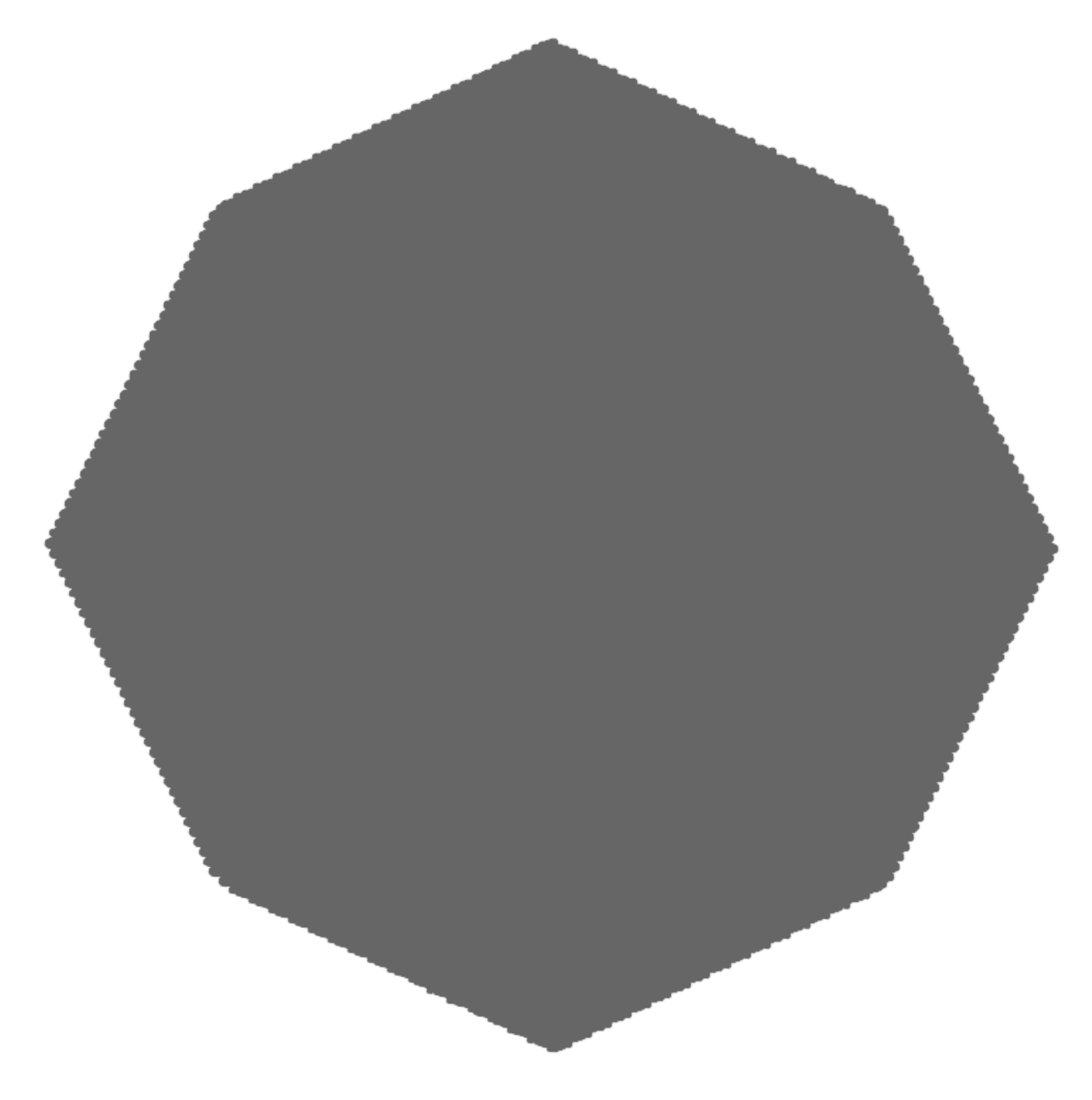};
        \end{axis}   
     \end{tikzpicture}    
        &    
     \begin{tikzpicture} 
         \begin{axis}[
          height = 1.601\figureheight,
          axis equal image,
          xmin=-0.9,xmax=0.9,ymin=-1.2,ymax=1.2,
          axis on top,
          major tick length = \ticklength,
          xtick={-1,-0.5,0,0.5,1}, ytick={-1,-0.5,0,0.5,1},      
          x axis line style = {-}, y axis line style = {-},
          ]
          \addplot graphics
                   [xmin=-0.85,xmax=0.85,ymin=-1.1,ymax=1.1]
                   {./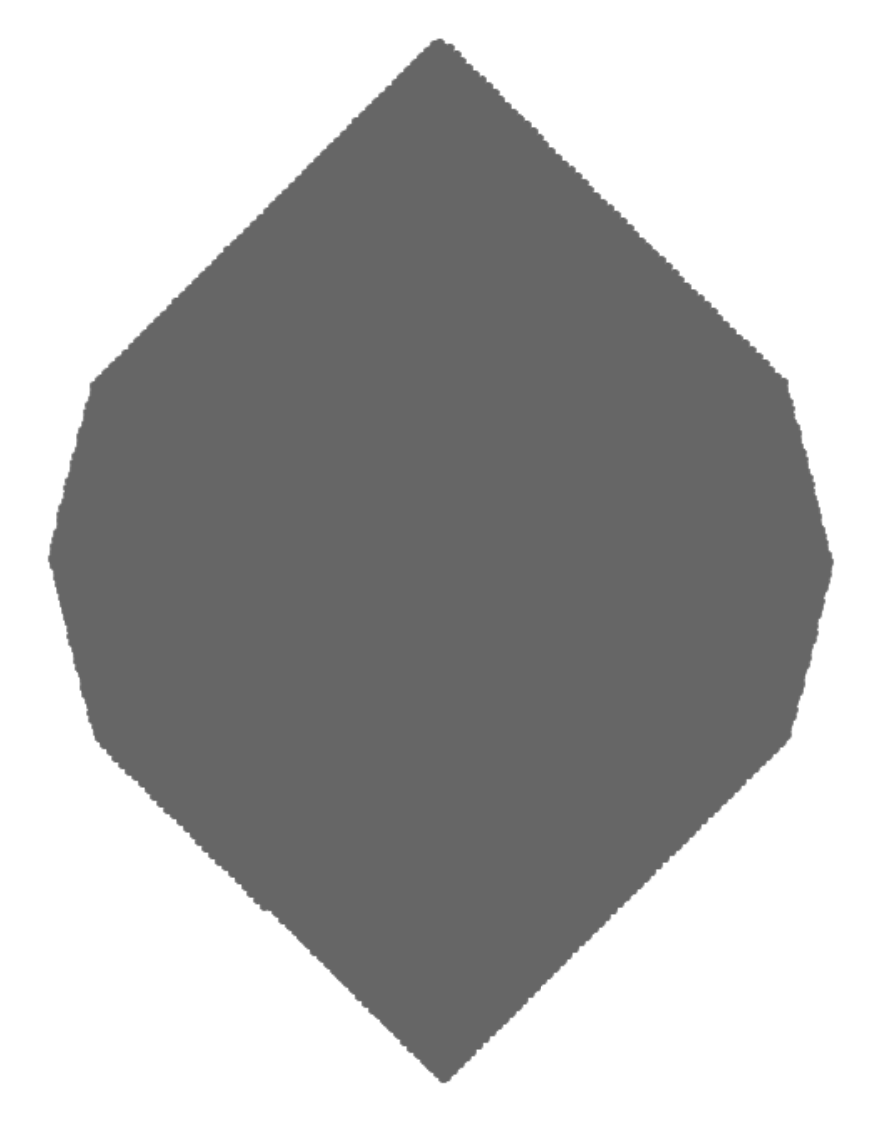};
        \end{axis}   
  \end{tikzpicture}   
 &  
     \begin{tikzpicture} 
         \begin{axis}[
          height = 1.601\figureheight,
          axis equal image,
          xmin=-1.1,xmax=1.1,ymin=-1.2,ymax=1.2,
          axis on top,
          major tick length = \ticklength,
          xtick={-1,-0.5,0,0.5,1}, ytick={-1,-0.5,0,0.5,1},      
          x axis line style = {-}, y axis line style = {-},
          ]
          \addplot graphics
                   [xmin=-1.05,xmax=1.05,ymin=-1.05,ymax=1.05]
                   {./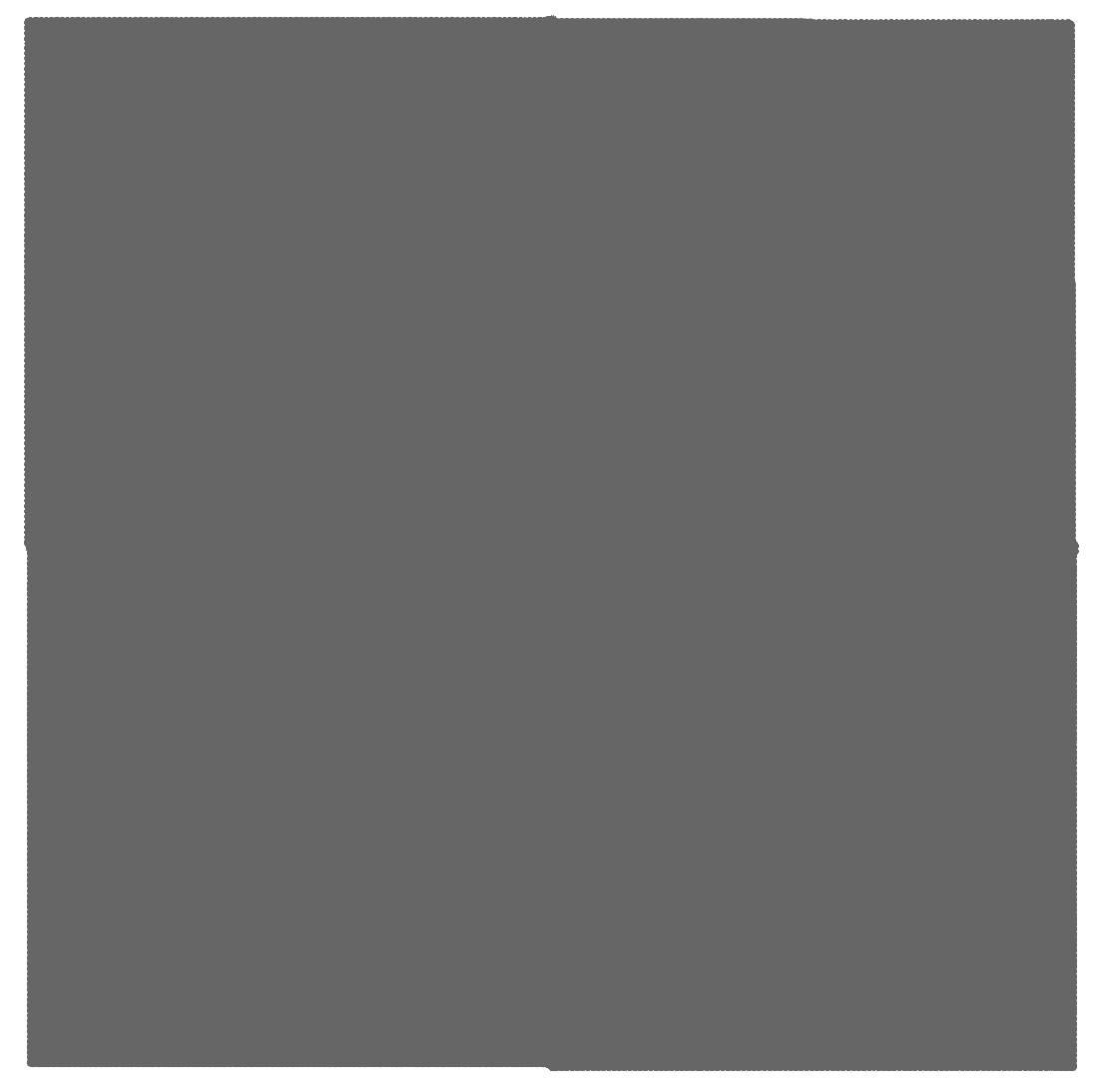};
        \end{axis}   
  \end{tikzpicture}       
  \end{tabular}
   \end{center}   
  \caption{Approximations $Q_{k,n}^*$ for the rotation sets of the perturbed maps $\bar{F}_{\nicefrac{1}{2},\nicefrac{1}{2}}, \bar{G}, \bar{F}_{1,\nicefrac{1}{4}}, \bar{F}_{\nicefrac{3}{5},\nicefrac{3}{5}}$, $\bar{F}_{\nicefrac{3}{4},1}$ and $\bar{F}_{1,1}$ (from top left to bottom right) according to Table \ref{t.table}.}
  \label{f.further_RS_pert}   
\end{figure}

\section{Conclusion}

\noindent
In conclusion, the set-oriented approach to the computation of rotation sets
provides better and more stable results than conventional direct
approaches. Moreover, it can at least partially be backed up by rigorous
convergence results, even if the theoretical error estimates are not useful in
practice. The much better performance of the algorithm for specific examples
finds a possible explanation in the likely presence of a shadowing property,
which can again be backed up by rigorous results.

What remains is to use this new numerical method in order to perform a
systematic and detailed study of the behaviour and bifurcations of rotation sets
in standard parameter families, as the one given by
(\ref{e.standard_family}). Of course, it is highly likely that the performance
of the algorithm becomes increasingly worse as bifurcation parameters are
approached, at which the rotation set changes and structural stability and
shadowing therefore have to break down. Therefore, it seems feasible to carry
out such investigations in collaboration with experts on scientific computing
and access to high-performance computing facilities, so that at least the limits
of contemporary computing capacities can be exhausted to partially counter these
effects. We leave this as a task for future research.\medskip

\begin{center}
  {\bf Matlab codes will be made available on the authors' homepages.}
\end{center}


\end{document}